\theoremstyle{plain}
\newtheorem{thm}{Theorem}[section]
\newtheorem{cor}[thm]{Corollary}
\newtheorem{lem}[thm]{Lemma}
\theoremstyle{definition}
\theoremstyle{remark}
\newtheorem{remark}{Remark}
\newcommand{\tauh}{\mathcal T _{h}} %decomposizione tau_{h}
\newcommand{\tauhtilde}{\widetilde{\mathcal T} _{h}} %sottodecomposizione tau_{h} tilde in triangoli
\newcommand{\underh}{_{h}} %semplicemente mette pedice h
\newcommand{\Nu}{ \mathcal V }%nu maiuscola che non esiste
\newcommand{\Epsilon}{ \mathcal E } %epsilon maiuscola
\newcommand{\Vh}{ V_{h} }  %spazio virtuale dipendente da h
\newcommand{\VhK}{\Vh(K)}
\newcommand{\ah}{ a \underh } %forma bilineare discreta VEM dipendente da h
\newcommand{\fh}{ f \underh } %loading term discreto VEM dipendente da h
\newcommand{\uh}{ u \underh } %soluzione VEM dipendente da h
\newcommand{\Pinablap}{\Pi ^{\nabla}_p} %operatore Pi nabla con dipendenza esplicita da p
\newcommand{\Pinabla}{\Pi ^{\nabla}_p} %operatore Pi nabla con o senza dipendenza da p
\newcommand{\NVK}{N_{K}} %numero di vertici dell'elemento K
\newcommand{\bigfh}{ \mathcal{F} \underh } % piccola costante su stime loading term senza dipendenza da p
\newcommand{\upi}{u_{\pi}} % approssimante polinomiale a tratti di VEM VOLLEY
\newcommand{\phihp}{\varphi_p^h} %funzione polinomiale a tratti continua approssimante hp in BabuSuri
\newcommand{\boldalpha}{\boldsymbol \alpha} %alpha bold
\newcommand{\el}{\ell} %parametro per la convergenza in p, regolarità della funzione a sinistra
\newcommand{\diam}{\text{diam}} %diametro
\newcommand{\const}{C} %costante generica
\newcommand{\AhK}{A_h^K} %matrice di stiffness locale
\newcommand{\ahK}{a_h^K} %forma bilineare locale
\newcommand{\tildephi}{\widetilde \varphi} %phi tildato
\newcommand{\hatphi}{\widehat \varphi} %phi cappuccio
\newcommand{\Omegaext}{\Omega_{\text{ext}}} %Omega esteso serve per le stime su funzioni analitiche con il nostro trick
\newcommand{\uhat}{\widehat u} % u cappuccio
\newcommand{\Qhat}{\widehat Q} % Q cappuccio
\newcommand{\Babuska}{Babu\v{s}ka} %macro Babuska
\author{
L. Beir\~ao da Veiga
\thanks{Dipartimento di Matematica,  Universit\`a degli Studi di Milano, E-mail: {\tt lourenco.beirao@unimi.it}},
% \quad and \quad
A. Chernov
\thanks{Institut f\"ur Mathematik, C. von Ossietzky Universit\"at Oldenburg, E-mail: {\tt alexey.chernov@uni-oldenburg.de}},
L. Mascotto
\thanks{Dipartimento di Matematica,  Universit\`a degli Studi di Milano, E-mail: {\tt lorenzo.mascotto@unimi.it}},
% \quad and \quad
A. Russo
\thanks{Dipartimento di Matematica,  Universit\`a degli Studi di Milano-Bicocca, E-mail: {\tt alessandro.russo@unimib.it}}
}
\date{}
\title{Basic principles of hp Virtual Elements on quasiuniform meshes}
\begin{document}
%%%%%%%%%%%%%%%%%%%%%%%%%%%%%%%%%%%%%

\maketitle
\begin{abstract}
In the present paper we initiate the study of $hp$ Virtual Elements. We focus on the case with uniform polynomial degree across the mesh and derive theoretical convergence estimates that are explicit both in the mesh size $h$ and in the polynomial degree $p$
in the case of finite Sobolev regularity. Exponential convergence is proved in the case of analytic solutions.
The theoretical convergence results are validated in numerical experiments. Finally, an initial study on the possible choice of local basis functions is included.
\end{abstract}

% -----------------------------------------------------------------
\section {Introduction}
% -----------------------------------------------------------------

The Virtual Element Method (VEM) is a very recent generalization of the Finite Element Method, introduced in \cite{VEMvolley}, that responds to the increasing interest in using general polyhedral and polygonal meshes, also including non convex elements and hanging nodes.
The main idea of VEM is to use richer local approximation spaces that include (but are typically not restricted to) polynomial functions and, most importantly, avoid the explicit integration of the associated shape functions. Indeed, the operators and matrices appearing in the problem are evaluated by introducing an innovative construction that only requires an implicit knowledge of the local shape functions.
By following such developments, the VEM acquires very interesting properties and advantages with respect to more standard Galerkin methods, yet still keeping the same implementation complexity. For instance, in addition to allowing for polygonal and polyhedral meshes, it can handle approximation spaces of arbitrary $C^k$ global regularity on unstructured meshes.

Although the Virtual Element Method has been applied to a large range of problems (a non exhaustive list being
%%%%%
\cite{VEMvolley, VEMelasticity, Brezzi-Marini:2012, hitchhikersguideVEM, equivalentprojectorsforVEM, streamvirtualelementformulationstokesproblempolygonalmeshes, VEMchileans, BeiraodaVeiga-Manzini:hkp, Brezzi-Falk-Marini, Paulino-VEM, Berrone-VEM, Topology-VEM}),
%%%%%
all the present works on VEM are focused, both theoretically and numerically, on the $h-$behaviour of the method. In other words, the convergence properties of the schemes are investigated assuming that the polynomial degree $p$ is fixed and only the mesh is refined. On the other hand, looking at the Finite Element literature, a very successful approach in applications is to allow for a variable value of $p$ and to focus not only in the accuracy that can be obtained by reducing the mesh size $h$, but also by increasing $p$. As in the FEM literature, we here refer to such approach as $hp$ analysis; we mention for instance
\cite{SchwabpandhpFEM,Melenk_hpsingularperturbations, BabuSurioptimalconvergenceestimatepmethods, babuskasurihpversionFEMwithquasiuniformmesh,PolyDG-1}
as very short list of papers and books among the very large literature of $hp$ FEM.

The aim of the present paper is to initiate the study of $hp$ Virtual Element Methods. The first motivation of such study is to show that the powerful $hp$ methodology can be adopted also in the framework of Virtual Elements. The second, but not secondary,
motivation is that we believe that combining the huge mesh flexibility of VEM with the advantages of a full (possibly adaptive) $hp$ method can yield a very efficient and competitive methodology.

The present contribution focuses on the initial foundations of such ambitious plan, mainly in terms of convergence estimates.
We restrict our attention to a two dimensional scalar elliptic model problem (as in \cite{VEMvolley}) and assume a polynomial degree $p$ that is the same on all elements of the (quasi uniform) mesh.
First of all, we prove fundamental convergence results (and the associated interpolation estimates) that is explicit in both $h$ and $p$.
As a second result, we show that also Virtual Elements can attain exponential convergence when the target solution is analytic on a suitable (small) extension of the domain.
We then explore numerically the behaviour of Virtual Elements in terms of $p$, both in the case of solutions with finite Sobolev regularity and for analytic solutions. %%
In the Appendix, we start to explore another interesting issue of $hp$ elements, that is the choice of the basis and the condition number of the associated stiffness matrix.
Note that, since in this work we focus on scalar problems in a planar two-dimensional domain, direct solvers can generally be used and the condition number issue is not of primary importance.
Indeed, it is mainly the stability of the solver that determines the best attainable accuracy, as we show in the numerical tests.
Nevertheless, in order to answer to some natural questions (such as: how do Legendre bases cope on general polygons?) we decided to include an initial study related to the choice of the basis.

The paper is organized as follows. After presenting the continuous
model problem, in Section \ref{sectionVEMforthePoissonproblem} we
make a brief review of the Virtual Element Method. Afterwards, in
Section \ref{sectionapproximationresult} we present the
theoretical error estimates and in Section
\ref{sectionnumericalresults} we develop the associated numerical
tests. Finally, the Appendix follows. 

% -----------------------------------------------------------------
\section {The model problem}
\label {sectionthecontinuousproblem}
% -----------------------------------------------------------------

Let $\Omega$ be a simply connected polygonal domain and let
$\Gamma$ be its boundary. Let $H^l(\omega)$, with $l\in
\mathbb N _0$ and $\omega$ open measurable set, denote the usual
Sobolev space with square integrable weak derivatives of order
$l$; let $||\cdot||_{l,\Omega}$ and $|\cdot|_{l,\Omega}$ denote
the associated norm and seminorm, respectively (see
\cite{adams_fournier_sobolev_2003}). Let $f\in L^2(\Omega)$. We
consider the two dimensional Poisson problem with homogeneous
Dirichlet boundary conditions:
\begin {equation} \label{continuousproblemstrongformulation}
-\Delta u =f  \text{  in  } \Omega,\quad u=0 \text{  on  } \Gamma.
\end{equation}
We set $V:= H_0^1 (\Omega)$ and we consider the weak formulation of problem (\ref{continuousproblemstrongformulation})
\begin {equation} \label{continuousproblemweakformulation}
\text{find } u\in V \text{ such that } a(u,v)=(f,v)_{0,\Omega},\quad \forall v\in V,
\end{equation}
where $(\cdot,\cdot)_{0,\Omega}$ is the $L^2$ scalar product on $\Omega$ and $a(\cdot, \cdot):= (\nabla     \cdot , \nabla \cdot)_{0,\Omega}$.

It is well known that problem (\ref{continuousproblemweakformulation}) is well-posed (see for instance \cite{BrennerScott})
since the bilinear form $a$ is continuous and coercive (i.e. $a(v,v) \ge  \alpha ||v||^2_{1,\Omega}$, where $\alpha >0$) thanks to the Poincar\'e inequality.
Throughout this paper, $\const$ denotes a positive constant whose dependence on certain parameters will be made explicit where necessary.

% -----------------------------------------------------------------
\section {Virtual Elements for the Poisson problem}
\label{sectionVEMforthePoissonproblem}
% -----------------------------------------------------------------

In the present section, we introduce a Virtual Element Method for the Poisson problem (\ref{continuousproblemweakformulation}) based on polygonal meshes.
Let $\{ \tauh \} \underh$ be a sequence of decompositions of $\Omega$ into non-overlapping polygonal elements $K$ of diameter $h_K = \diam(K) := \sup\{|\mathbf{x} - \mathbf{y}|~:~\mathbf{x},\mathbf{y} \in K\}$. The characteristic mesh size is denoted by $h:=\max\{h_K~:~K \in \tauh\}$. Let $\Nu _h$ and $\Epsilon \underh$ be the sets of all vertices and edges in the mesh $\tauh$ respectively. Moreover, we denote by $\Nu_h^b := \Nu _h \cap \partial \Omega$ be the set of all boundary vertices and by $\Epsilon \underh ^K$ the set of edges $e$ of an element $K \in \tauh$.

Henceforth, we assume that there exist two positive real numbers $\gamma$ and $\widetilde \gamma$ such that the sequence of decompositions satisfies the following:
\begin {enumerate}
\item[$(\mathbf{D0})$] the decomposition $\tauh$ is made of a finite number of simple polygons of diameter $h_K$,
\item[$(\mathbf{D1})$] for all $K \in \tauh$, $K$ is star-shaped with respect to a ball of radius $\ge h_K \gamma$,
\item[$(\mathbf{D2})$]  for all $K \in \tauh$, the distance between any two vertices of $K$ is $\ge h_K \widetilde {\gamma}$.
\end {enumerate}
To every edge $e \in \Epsilon \underh $ we associate a tangential vector $\boldsymbol{\tau}_e$ and a normal unit vector $\mathbf{n}_e$ obtained by a counter-clockwise rotation of $\boldsymbol{\tau}_e$.

We split the bilinear form $a$ as a sum of local contributions
\[
a(u,v) := \sum _{K \in \tauh} a ^K (u,v),\quad \forall u,\, v \in V,
\]
with $a^K(u,v) := (\nabla u, \nabla v)_{0,K}$. 

It was shown in \cite{VEMvolley} that it is possible to build:
\begin {itemize}
\item $\Vh(K)$, a finite dimensional subspace of $H_0^1(\Omega)|_K$;
\item a symmetric local bilinear form $\ah^K: \Vh(K) \times \Vh(K) \rightarrow \mathbb R$;
\item $\Vh$ a finite dimensional subspace of $H_0^1(\Omega)$ such that $\Vh|_K = \Vh (K)$;
\item a symmetric bilinear form $\ah : \Vh \times \Vh \rightarrow \mathbb R$, of the form $\ah (u_h,v_h)= \sum _{K \in \tauh} \ah^K(u_h,v_h) $, $\forall u_h,v_h \in \Vh$ where $\ah^K : \Vh|_K \times \Vh|_K \rightarrow \mathbb R$ are local symmetric bilinear forms;
\item an element $\fh \in \Vh '$ and a duality pairing $\langle \cdot ; \cdot \rangle \underh$
\end{itemize}
in such a way that the resulting discrete problem
\begin {equation} \label{discreteproblem}
\text{find } \uh \in \Vh \quad \text{such that}\quad \ah (\uh, v_{h}) = \langle \fh ; v_{h} \rangle\underh,\quad \forall v_h \in \Vh
\end{equation}
has a unique solution $\uh \in \Vh$ which is close to the solution $u$ of the original problem \eqref{continuousproblemweakformulation}. More precisely, when $u \in H^k(\Omega)$ the error in the energy norm admits the upper bound
\begin{equation}\label{hconvergenceVEMvolley}
\text{if } u\in H^{k}(\Omega),\, k\ge 1,\quad  |u - \uh|_{1,\Omega} \le \const h^{k-1} |u|_{k,\Omega}
\end {equation}
where the constant $C=C(p)$ depends implicitly on the (fixed)
polynomial degree $p$ but not on the characteristic mesh size $h$. We now briefly review the local Virtual
Spaces introduced in \cite{VEMvolley}. Let $K \in \tauh$ and let
$p\in \mathbb N$, $p\ge 1$. Let $\mathbb P _p(e)$ and $\mathbb
P_{p-2}(K)$ be respectively the set of polynomials of degree $p$
over the edge $e$ and of degree $p-2$ over the polygon $K$,
with the convention $\mathbb P_{-1}(K) = \{ 0 \}$. With the space of contunuous piecewise polynomials over the boundary of each element $K$
\[
\mathbb B _p (\partial K) := \left\{  v_h\in \mathcal{C}^0(\partial K) \mid v_h|_e \in \mathbb P _ p (e),\, \forall e \in \Epsilon \underh ^K    \right\},
\]
we define the local Virtual Element spaces
\begin{equation} \label{virtualspace}
\Vh (K) := \left\{  v_h \in H^1(K) \mid \Delta v_h \in \mathbb P _{p-2} (K), v_h|_{\partial K} \in \mathbb B_p (\partial K)            \right\} .
\end{equation}
Observe that $\mathbb P _{p}(K)\subseteq \Vh (K)$ for any $K \in \tauh$. For any fixed function $v\in \VhK$ we identify the following set of local degrees of freedom:
\begin {itemize}
\item the values of $v$ at vertices of $K$,
\item the values of $v$ at $(p-1)$ internal nodes of each edge $e$ $\in \Epsilon \underh^K$ (for instance at the internal Gau\ss-Lobatto nodes, as done in \cite{VEMvolley} and  \cite{hitchhikersguideVEM}),
\item the internal moments $\frac{1}{|K|} \int _K q_{\mathbf{\boldalpha}} v_h dx$, where $\left\{ q_{\boldalpha}~:~ 0\leq  |\boldalpha| \leq  p(p-1)/2\right\}$ is a basis for $\mathbb P _{p-2}(K)$.
For instance, \cite{VEMvolley} and \cite{hitchhikersguideVEM} employed a basis of shifted and scaled monomials:
let $\mathbf{x_K}$ and $h_K$ be the barycenter and the diameter of $K$ respectively, then
$q_{\mathbf{\boldalpha}}(\mathbf{x}):=\left( \frac{\mathbf{x} - \mathbf{x_K}}{h_K}  \right)^{\mathbf{\boldalpha}}$ for any
$\mathbf{\boldalpha}= (\alpha _1, \alpha _2) \in \mathbb N_0 ^2$ such that $|\alpha|:=\alpha _1 + \alpha _2 \le p-2$.
\end{itemize}
\noindent The global Virtual Space is obtained by the continuous matching of the local spaces over the element boundaries
\[
\Vh := \left\{  v_h\in C^0(\overline \Omega) ~:~ v_h |_K \in \VhK, v_h|_{\partial \Omega} = 0\right\} \subset H^1_0(\Omega)
\]
with the natural definition of the global degrees of freedom from the local ones. It was shown in \cite{VEMvolley} that, given $K\in \tauh$, the bilinear forms $\ah ^K$ must satisfy the two following assumptions.
\begin{enumerate}
\item[(\textbf{A1})] \textbf{$p$-consistency}: $\forall K \in \tauh$ it holds that
\begin{equation} \label{pconsistencyequation}
a^K(q,v_h)=\ah ^K (q,v_h),\quad \forall v_h\in \VhK,\; \forall q\in \mathbb P _{p}(K);
\end{equation}
\item[(\textbf{A2})] \textbf{stability}: $\forall K \in \tauh$ there exist two constants $0< \alpha_{*} < \alpha^{*}<\infty$ such that
\begin{equation} \label{stabilityequation}
\alpha_{*} a^K(v_h,v_h) \le \ah ^K (v_h,v_h) \le \alpha ^{*} a^K(v_h,v_h), \quad \forall v_h \in \VhK.
\end{equation}
\end{enumerate}
Let $\varphi$ be a sufficiently regular function, e.g. $\varphi \in H^1(K)$. We introduce the local averaging operator:
\begin{equation} \label{overlinephi}
\overline \varphi := 
\begin{cases}
\frac{1}{|\partial K|} \int _{\partial K} \varphi (x)dx,& \text{if } p=1\\
\frac{1}{|{K}|} \int _{K} \varphi (x)dx,& \text{if } p>1\\
\end{cases},
\end{equation}

Having this, we introduce the projection operator 
$\Pinablap : \VhK \rightarrow \mathbb P_p(K)$ as follows: for any $v_h \in \VhK $ its projection $\Pinablap v_h \in \mathbb P_p(K)$ is the unique polynomial satisfying,
\begin{equation} \label{definitionPinabla}
\begin{split}
a^K(\Pinabla v_h -v_h , q)=0&\quad \forall q \in \mathbb P _p (K),\\
\overline {\Pinabla v_h-v_h} = 0& \quad\text{where the averaging operator is defined in } (\ref{overlinephi}).
\end{split}
\end{equation}
Then, a candidate bilinear form $\ah$ satisfying (\textbf{A1}) and (\textbf{A2}) can be sought in the form:
\[
\ah ^K (u_h,v_h) = a^K (\Pinabla u_h, \Pinabla v_h) + S^K(u_h - \Pinabla u_h,v_h - \Pinabla v_h),\quad \forall u_h, v_h \in \VhK,
\]
\noindent where $S^K$ is a positive definite bilinear form satisfying
\begin{equation} \label{stabilizingterm}
c_0 a^K(v_h,v_h)\le S^K(v_h,v_h) \le c_1 a^K(v_h,v_h),\quad  \forall v_h \in \VhK, \text{ such that } \Pinabla v_h=0,
\end{equation}
for some positive constants $c_0$ and $c_1$ independent on $h$, $p$ and $K$.\\
The global discrete bilinear form reads
\[
\ah (u_h,v_h) := \sum _{K\in \tauh} \ah ^K (u_h,v_h), \quad \forall u_h,v_h \in \Vh.
\]
Finally, we recall from \cite{VEMvolley} a possible choice for the loading term.
Let $P ^{0,K}_{p-2}$ and $P ^{0,K}_{0}$ be the $L^2$-projector on polynomials of degree $p-2$ and $0$ respectively over the polygon $K$ and let the averaging operator be defined in (\ref{overlinephi}).
Then, we may define
\begin {equation} \label{discreteloadingterm}
\langle \fh, v_h \rangle \underh :=
\begin{cases}
\displaystyle \sum \limits_{K\in \tauh} \int _ K \left[ P ^{0,K}_{p-2} f \right] v_h dx, \quad \forall v_h\in \Vh& \text{if } p\ge 2,\\
\displaystyle \sum \limits_{K\in \tauh} \int _ K \left[ P ^{0,K}_{0} f \right] \overline{v_h} dx, \quad \forall v_h\in \Vh& \text{if } p=1.\\
\end{cases}
\end{equation}
\noindent \\
Under the above choices for $\Vh$, $\ah$ and $\fh$,
\cite{VEMvolley} guarantees well-posedness and $h$-convergence
\eqref{hconvergenceVEMvolley}.
\begin{remark} \label{remark Pinabla operator} As shown in \cite{VEMvolley}, the projection operator $\Pinabla$ in \eqref{definitionPinabla} is computable using the degree of freedom values,
without the need of any further information on the virtual shape functions. We finally note that the definition in \eqref{overlinephi} is not the only possible one; other (computable) choices could be used instead.
\end{remark}
% -----------------------------------------------------------------
\section{Approximation results} \label{sectionapproximationresult}
% -----------------------------------------------------------------

In this section, we give a convergence result for the error of the Virtual Element Method measured in the energy norm in terms of both $h$ and $p$.
% -----------------------------------------------------------------
\subsection{Auxiliary approximation results} \label{subsectionsomeapproximationresults}
% -----------------------------------------------------------------
Let $u$ and $u_h$ be the solutions of (\ref{continuousproblemweakformulation}) and (\ref{discreteproblem}) respectively, and denote by $S_h^{p,-1} (\tauh)$ the space of the piecewise discontinuous polynomials of degree $p$ over the decomposition $\tauh$.
Given $u\in H^1(K)$, $\forall K\in \tauh$ , we define the broken $H^1$-seminorm as
\begin{equation} \label{H1brokenSobolevseminorm}
|u|_{h,1,\Omega} = \sum _{K\in \tauh} \left( |u|^2_{1,K} \right)^{\frac{1}{2}}.
\end{equation}
Let $\bigfh$ be the smallest constant satisfying
\begin{equation} \label{definitionbigfhp}
(f,v_h)_{0,\Omega} - \langle \fh , v_h \rangle \underh \le \bigfh |v_h|_{1,\Omega}, \quad \forall v_h \in V_h.
\end{equation}
Then the following best approximation estimate holds (see Theorem 3.1 in \cite{VEMvolley})
\begin{equation} \label{stimevecchieVEMvolley}
|u-\uh| _{1, \Omega} \le \const \left(  \inf_{\upi \in S_h ^{p,-1}(\tauh)} |u-\upi|_{1,h,\Omega} + \inf _{u_I \in \Vh} |u-u_I|_{1,\Omega} +  \bigfh     \right),
\end{equation}
where $\const$ is a constant depending only on $\alpha_{*}$ and $\alpha^{*}$ from assumption (\textbf{A2}).
In what follows, we shall derive estimates for the three terms in (\ref{stimevecchieVEMvolley}) that are explicit in both $h$ and $p$.\\
% -----------------------------------------------------------------
\subsubsection {Polynomial approximation term} \label{subsubsectionfirstterm}
% -----------------------------------------------------------------
We start by bounding the term $|u-\upi|_{h,1,\Omega}$.
In order to derive the bound, we need to prove a generalized-polygonal version of a classic result, namely Lemma 4.1 in \cite{babuskasurihpversionFEMwithquasiuniformmesh}.
In this lemma, it was shown the existence of a sequence of polynomials which approximate $H^k$ functions over the triangular and square reference elements.
We extend this result for generic polygons having the unit diameter. Thus, we are ready to show the following
\begin{lem}\label{lemmapestimatesonboundeddomain}
Let $\widehat{K} \subseteq \mathbb R ^2$ be a polygon with diam($\widehat{K}$)=1.
Moreover, assume that $\widehat K$ is star-shaped with respect to a ball of radius $\ge \gamma$ and the distance between any two vertices of $\widehat K$ is $\ge  \widetilde \gamma$,
$\gamma$ and $\widetilde \gamma$ being the constants introduced in assumptions (\textbf{D1}) and (\textbf{D2}) of Section \ref{sectionVEMforthePoissonproblem}.
Then, there exists a family of projection operators $\{ \widehat{\Pi}_{\widehat K, p}   \}$, $p=1,2,\dots$ with $\widehat{\Pi}_{\widehat K, p} : H^{k+1}(\widehat K) \rightarrow \mathbb P_p (\widehat K)$
such that, for any $0 \le \el \le k+1$, $\widehat u \in H^{k+1}(\widehat K)$, $k\in \mathbb N$, it holds
\begin{equation} \label{estimatelemma41}
|| \widehat u - \widehat{\Pi}_{\widehat K, p} \widehat u ||_{\el,\widehat K} \le \const  p^{-(k+1-\el)} || \widehat u ||_{k+1,\widehat K}
\end{equation}
\noindent with $\const$ a constant independent on $u$ and $p$.
\end{lem}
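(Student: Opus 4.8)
The plan is to reduce the estimate on the general polygon $\widehat K$ to the classical result on a reference square, for which the $p$-explicit polynomial approximation rate \eqref{estimatelemma41} is already available (Lemma 4.1 in \cite{babuskasurihpversionFEMwithquasiuniformmesh}). Since $\diam(\widehat K)=1$, we may fix once and for all a square $Q$ (independent of $\widehat K$) with $\widehat K \subset Q$. The construction then proceeds in three steps: extend $\widehat u$ from $\widehat K$ to $Q$, approximate the extension by a polynomial on $Q$ using the known result, and restrict that polynomial back to $\widehat K$.

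First I would invoke a Sobolev extension operator $E : H^{k+1}(\widehat K) \to H^{k+1}(Q)$ with $E\widehat u|_{\widehat K}=\widehat u$ and $\| E\widehat u \|_{k+1,Q} \le \const \, \| \widehat u \|_{k+1,\widehat K}$. Here the geometric hypotheses are essential: assumptions $(\mathbf{D1})$ and $(\mathbf{D2})$ guarantee that $\widehat K$ is a Lipschitz domain whose Lipschitz character is controlled solely by $\gamma$ and $\widetilde\gamma$. Consequently, via a universal (e.g.\ Stein) extension, the operator $E$ can be chosen bounded on $H^{k+1}$ with a constant depending only on $k$, $\gamma$ and $\widetilde\gamma$, and crucially \emph{not} on $p$.

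Next, I would apply the reference-square version of the estimate to $E\widehat u \in H^{k+1}(Q)$: there exists $q_p \in \mathbb P_p(Q)$ with $\| E\widehat u - q_p \|_{\el,Q} \le \const \, p^{-(k+1-\el)} \| E\widehat u \|_{k+1,Q}$ for every $0 \le \el \le k+1$. I would then \emph{define} $\widehat\Pi_{\widehat K,p}\widehat u := q_p|_{\widehat K} \in \mathbb P_p(\widehat K)$, which is linear since both $E$ and the square projector are. Because the restriction map $H^{\el}(Q) \to H^{\el}(\widehat K)$ has norm at most one (the seminorms on $\widehat K$ integrate over a subset of $Q$), chaining the three bounds gives
\[
\| \widehat u - \widehat\Pi_{\widehat K,p}\widehat u \|_{\el,\widehat K} \le \| E\widehat u - q_p \|_{\el,Q} \le \const \, p^{-(k+1-\el)} \| E\widehat u \|_{k+1,Q} \le \const \, p^{-(k+1-\el)} \| \widehat u \|_{k+1,\widehat K},
\]
which is precisely \eqref{estimatelemma41}.

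The main obstacle is the extension step: one must ensure that the operator norm of $E$ is controlled uniformly in terms of $\gamma$ and $\widetilde\gamma$ and is independent of the degree $p$. This is exactly why the star-shapedness and minimal vertex-distance assumptions are imposed, since they rule out degenerate features (cusps, arbitrarily thin necks) that would blow up the Lipschitz character and hence the extension constant. The remaining steps, quoting the square estimate and bounding the restriction, are routine.
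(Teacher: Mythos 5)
Your proposal is correct and follows essentially the same route as the paper: both reduce to a fixed enclosing square, invoke a Stein-type extension whose norm is controlled only by $k$, $\gamma$, $\widetilde\gamma$ (via the Lipschitz character guaranteed by $(\mathbf{D1})$--$(\mathbf{D2})$), apply the \Babuska--Suri square estimate to the extended function, and restrict the resulting polynomial back to $\widehat K$. The only cosmetic difference is that the paper additionally arranges the extension to vanish near the boundary of the larger square, a technical detail needed inside the quoted \Babuska--Suri construction rather than a separate idea.
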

\begin{proof}
We assume without loss of generality that $\mathbf{x_{\widehat{K}}}$, the barycenter of $\widehat{K}$, coincides with the origin \textbf{0}.
For a given $r>0$, we define
\begin {equation} \label{definizionequadrator}
R(r):= \left\{  (x,y)\in \mathbb R ^2 \mid |x|<r, \, |y|<r     \right\}.
\end{equation}
Thanks to the fact that diam($\widehat K$)=1 and $\mathbf
x_{\widehat K}=\mathbf{0}$, we have $R(1)\supset \overline
{\widehat K}$.\\
Let $r_0 >1$. Then, it obviously holds $\overline{\widehat{K}} \subset R(r_0)$.
We note that $\partial \widehat K$ is Lipschitz; consequently, using \cite{steinsingularintegrals},
there exists $E: H^{k+1}(\widehat{K}) \rightarrow H^{k+1}(R(2r_0))$ extension operator such that $E\widehat u=0$ on $R(2r_0)\setminus R(\frac{3}{2}r_0)$ and $|| E \widehat u   ||_{k+1,R(2r_0)} \le \const || \widehat u  ||_{k+1,\widehat K}$.
A careful inspection of Theorem 5 in Chapter VI of \cite{steinsingularintegrals} shows that the constant $\const$ depends only on $k$ and on the ``worst angle'' value
$$
\theta_{\widehat{K}} = \min_{\theta \in {\cal A}_{\widehat{K}}}  \min{ \{ \theta, 2\pi-\theta \} },
$$
where ${\cal A}_{\widehat{K}}$ denotes the set of the (amplitude of) internal angles of $\widehat{K}$.
In particular, the constant $\const$ may explode when $\theta_{\widehat{K}} \rightarrow 0$.
It is easy to check that, under the regularity hypotheses on $K$,
the angle parameter $\theta_{\widehat{K}}$ is bounded from below by a constant depending only on $\gamma, \widetilde \gamma$. Thus the constant $\const$ can be bounded in terms of $k$ and $\gamma, \widetilde \gamma$.
Therefore, it holds $|| E \widehat u   ||_{k+1,R(2r_0)} \le \const( k, \gamma, \widetilde \gamma) || \widehat u  ||_{k+1,\widehat K}$.
The remaining part of the proof, that is based on the approximation of the extended function $E \widehat u $, follows exactly the same steps as in \cite{babuskasurihpversionFEMwithquasiuniformmesh}, Lemma 4.1, and is therefore not shown.
\end{proof}

\noindent Using this result, we are able to give a generalized-polygonal version of Lemma 4.5 of \cite{babuskasurihpversionFEMwithquasiuniformmesh},
which will play the role of local $hp$ estimate result on $|u-\upi|_{1,K}$, where $K$ is a polygon of the decomposition $\tauh$.
\begin{lem} \label {lemmaphBrambleHilbert}
Let $K\in \tauh$ satisfying assumptions (\textbf{D1}) and (\textbf{D2}) and $u\in H^{k+1}(K)$. Then there exists a sequence of projection operators $\{ \Pi_{K, p}^h   \}$, $p=1,2,\dots$, with $\Pi_{K, p}^h : H^{k+1}(K) \rightarrow \mathbb P_p (K)$
such that for any $0 \le \el \le k+1$, $k\in \mathbb N$
\begin{equation} \label{formulalemma42}
|u-\Pi ^h_{K,p}u|_{\el,K} \le \const \frac{h_K^{\mu + 1 -\el}}{p^{k+1-\el}} || u ||_{k+1,K},
\end{equation}
\noindent where $\mu = \min (p, k)$ and $\const$ is independent on $u$, $h$ and $p$.
\end{lem}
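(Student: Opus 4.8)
The plan is to reduce the estimate on the arbitrary polygon $K$ to the unit-diameter case already handled in Lemma \ref{lemmapestimatesonboundeddomain} by means of a standard scaling (dilation) argument. Let $\widehat K := K / h_K$ be the scaled polygon obtained by the affine map $\mathbf x \mapsto \widehat{\mathbf x} = \mathbf x / h_K$ (after translating the barycenter to the origin, as in the previous lemma); then $\diam(\widehat K) = 1$, and the regularity hypotheses (\textbf{D1}) and (\textbf{D2}) are invariant under dilation, so $\widehat K$ satisfies exactly the assumptions of Lemma \ref{lemmapestimatesonboundeddomain} with the same constants $\gamma,\widetilde\gamma$. I would set $\widehat u(\widehat{\mathbf x}) := u(\mathbf x)$, apply the operator $\widehat\Pi_{\widehat K,p}$ to $\widehat u$, and then define $\Pi^h_{K,p} u$ by pulling the resulting polynomial $\widehat\Pi_{\widehat K,p}\widehat u \in \mathbb P_p(\widehat K)$ back to $K$ through the inverse map; since the map is affine, the pullback of a polynomial of degree $p$ is again a polynomial of degree $p$, so $\Pi^h_{K,p} u \in \mathbb P_p(K)$ as required.

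The core of the argument is the tracking of the scaling factors under this change of variables. The $\el$-th order Sobolev seminorm scales as $|v|_{\el,K} = h_K^{\el - 1}\, |\widehat v|_{\el,\widehat K}$ in two dimensions (the Jacobian contributes $h_K^{2}$ to the integral, the $\el$ derivatives each contribute a factor $h_K^{-1}$, giving $h_K^{2 - 2\el}$ inside the square and $h_K^{1-\el}$ after taking the square root). Applying this to the left-hand side with $v = u - \Pi^h_{K,p} u$, and to the right-hand side norm $\|u\|_{k+1,K}$, I would convert the unit-domain estimate \eqref{estimatelemma41} (taken with $k$ replaced appropriately so that the target regularity is $k+1$) into the desired form on $K$. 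The only subtlety is that \eqref{estimatelemma41} is valid on $\widehat K$ for the full norm up to order $k+1$, whereas \eqref{formulalemma42} must display the sharp power $\mu = \min(p,k)$; the factor $h_K^{\mu+1-\el}$ rather than $h_K^{k+1-\el}$ arises precisely because one cannot extract more than $\min(p,k)$ powers of $h_K$ from the approximation error, and I would obtain it by splitting the full norm $\|\widehat u\|_{k+1,\widehat K}$ back into its homogeneous seminorm components and rescaling each separately, so that the dominant (least decaying) term fixes the exponent.

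The main obstacle I anticipate is bookkeeping rather than conceptual: one must be careful that the constant $\const$ remains independent of $h$ and $p$ after rescaling, which requires checking that all hidden constants in Lemma \ref{lemmapestimatesonboundeddomain} depend only on $k$, $\gamma$, and $\widetilde\gamma$ (this was already established there) and not on the particular $\widehat K$, and that the powers of $h_K$ collected on both sides combine to give exactly $h_K^{\mu+1-\el}/p^{k+1-\el}$. I would finish by verifying the two extreme regimes: when $p \le k$ (so $\mu = p$) the estimate is the usual $h$-convergence rate limited by the polynomial degree, and when $p \ge k$ (so $\mu = k$) it is the full Sobolev-regularity rate with the additional $p$-power gain; checking that the scaling produces the correct exponent in both cases is the one place where a sign or off-by-one error could slip in.
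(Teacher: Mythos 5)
Your overall strategy --- rescale $K$ to the unit-diameter polygon $\widehat K$, apply Lemma \ref{lemmapestimatesonboundeddomain} there, and pull the polynomial back through the affine map --- is exactly the paper's. (A small slip: you write the scaling identity as $|v|_{\el,K}=h_K^{\el-1}|\widehat v|_{\el,\widehat K}$, whereas your own parenthetical derivation correctly produces $|v|_{\el,K}=h_K^{1-\el}|\widehat v|_{\el,\widehat K}$.) The genuine gap is in the mechanism you propose for producing the power $h_K^{\mu+1-\el}$: you say you would split the full norm $\|\widehat u\|_{k+1,\widehat K}$ into its seminorm components, rescale each, and let ``the dominant (least decaying) term fix the exponent.'' But the low-order components rescale with \emph{negative} powers of $h_K$ --- for instance $\|\widehat u\|_{0,\widehat K}=h_K^{-1}\|u\|_{0,K}$ --- so the dominant term of this naive splitting is the worst one, and the argument would yield something like $h_K^{-\el}$ instead of $h_K^{\mu+1-\el}$. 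No bookkeeping fixes this; the low-order content of $\widehat u$ has to be removed before rescaling.

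The missing ingredient is the combination of the projection property of $\widehat\Pi_{\widehat K,p}$ with the Bramble--Hilbert (Deny--Lions / Scott--Dupont) lemma. Since $\widehat\Pi_{\widehat K,p}\widehat q=\widehat q$ for every $\widehat q\in\mathbb P_p(\widehat K)$, one writes $\widehat u-\widehat\Pi_{\widehat K,p}\widehat u=(\widehat u-\widehat q)-\widehat\Pi_{\widehat K,p}(\widehat u-\widehat q)$ and applies \eqref{estimatelemma41} to $\widehat u-\widehat q$, which gives a bound by $\const\,h_K^{1-\el}p^{-(k+1-\el)}\inf_{\widehat q\in\mathbb P_p(\widehat K)}\|\widehat u-\widehat q\|_{k+1,\widehat K}$. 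The Bramble--Hilbert lemma then controls this infimum by the seminorms $|\widehat u|_{i,\widehat K}$ of order $i$ between (roughly) $\mu$ and $k+1$ only; these rescale as $h_K^{i-1}|u|_{i,K}$ with nonnegative exponents, the least of which, combined with the prefactor $h_K^{1-\el}$, yields exactly $h_K^{\mu+1-\el}$ and the full norm $\|u\|_{k+1,K}$ on the right-hand side. This add-and-subtract step and the invocation of Scott--Dupont theory are the heart of the paper's proof of \eqref{formulalemma42}, and your proposal as written does not contain a substitute for them.
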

\begin{proof}
We consider the mapping $F(\mathbf{x})=
\frac{1}{h_K}(\mathbf{x}-\mathbf{x_K})$. Let $\widehat K = F(K)$,
where $h_K$ denotes the barycenter of $K$. Obviously
diam($\widehat K$)=1 and the barycenter of $\widehat K$ is in the origin, $\mathbf{x_{\widehat
K}}=\mathbf{0}$. Let $\widehat{\Pi}_{\widehat K, p} \widehat u$ be
the sequence of approximating polynomials of degree $p$,
introduced in Lemma \ref{lemmapestimatesonboundeddomain}. We set
$\Pi_{K, p}^hu$ be the push forward of the above sequence with
respect to the transformation $F$, i.e. $\Pi^h_{K,p}u =
(\widehat{\Pi}_{\widehat K, p}(\widehat u)) \circ F$, where
$\widehat \varphi = \varphi \circ F^{-1}$ for a sufficiently
regular function $\varphi$. Then, it is easy to check, by a simple
change of variables argument, that
\[
|u-\Pi_{K,p}^h u|_{\el,K} \le \const h_K ^ {1-\el} | \widehat u - \widehat {\Pi}_{\widehat K, p} \widehat u |_{\el,\widehat K},
\]
where $\const$ is a constant independent on $K$ (hence on $\widehat K$), $h$, $u$ and $p$;
besides, $\const$ is independent also on $\el$, $\gamma$ and $\widetilde \gamma$, thanks to the fact that $F$ is the composition of a translation with a dilatation.\\
We apply Lemma \ref{lemmapestimatesonboundeddomain} and we obtain, by adding and subtracting any $\widehat q \in \mathbb P_p(\widehat K)$,
\begin{equation} \label{espressionedipendentedaKcappuccio}
|u-\Pi_{K,p}^h u|_{\el,K} \le   \const h_K ^{1-\el} || (\widehat u - \widehat q) -  \widehat {\Pi}_{\widehat K, p} (\widehat u - \widehat q) ||_{\el,\widehat K}\le
                                                    \const(k,\gamma, \widetilde \gamma) \frac{h_K ^{1-\el}}{p^{k+1-\el}} ||\widehat u - \widehat q||_{k+1,\widehat K},
                    \quad \forall \widehat q \in \mathbb P _p (\widehat K),
\end{equation}
where $\const$ in the right hand side of  \eqref{espressionedipendentedaKcappuccio} is a constant depending on $k, \gamma, \widetilde \gamma$,
since clearly $\widehat{K}=F(K)$ still satisfies the shape regularity assumptions with the same constants $\gamma, \widetilde \gamma$.
Using the classical Scott-Dupont theory (see e.g. \cite{BrennerScott} or \cite{ScottDupontBrambleHilbert}) and a scaling argument, bound (\ref{espressionedipendentedaKcappuccio}) yields
\begin{equation} \label{stimeqkonK}
|u - \Pi ^h_{K,p}u|_{\el,K} \le \const \frac{h_K^{1-\el}}{p^{k+1-\el}} \left( \sum_{i=\mu}^{k+1} |\widehat u|^2_{i,\widehat K}  \right)^{\frac{1}{2}} \le \const\frac{h_K^{\mu + 1 - \el}}{p^{k+1-\el}} ||u||_{k+1,K}, \quad \mu = \min (p, k),
\end{equation}
where $\const$ is independent on $u$, $p$ and $h$.
\end{proof}
\begin{remark} \label{remarkdanormaaseminorma}
We note that if $k \le p$ then the classical Bramble-Hilbert Lemma allows to take the seminorm in the right hand side of \eqref{formulalemma42}, yielding
\[
|u-\Pi ^h_{K,p}u|_{\el,K} \le \const(k,\gamma, \widetilde \gamma) \frac{h_K^{k+1-\el}}{p^{k+1-\el}} | u |_{k+1,K}.
\]
\end{remark}
We are now able to give a global estimate on $|u-\upi|_{h,1,\Omega}$ in \eqref{stimevecchieVEMvolley}, where $\upi \in S_h^{p, -1}(\tauh)$, $S_h^{p, -1}(\tauh)$ being defined at the beginning of Section \ref{subsectionsomeapproximationresults}.
In fact, by choosing $\upi|_K= \Pi _{K,p}^h u$ for all $K\in \tauh$ and recalling the shape regularity properties (\textbf{D1})-(\textbf{D2}), we obtain:
\begin{equation} \label{stimaupi}
\begin{split}
&|u-\upi|_{h,1,\Omega} \le \const_1 \frac{h^{\mu}}{p^{k}} ||u||_{k+1,\Omega}, \quad \mu = \min (p,k), \\
&|u-\upi|_{h,1,\Omega} \le \const_2 \frac{h^{k}}{p^{k}} |u|_{k+1,\Omega}, \quad  \text{for } p \geq k\\
\end {split}
\end{equation}
where $\const_1$ and $\const_2$ are two constants independent on $u$, $p$ and $h$.\\

% -----------------------------------------------------------------
\subsubsection{Virtual Interpolation term} \label{subsectionsecondterm}
% -----------------------------------------------------------------

We turn now to the term $|u-u_I|_{1,\Omega}$ in (\ref{stimevecchieVEMvolley}).
Preliminarily, we observe that (\textbf{D1}) and (\textbf{D2}), defined in Section \ref{sectionVEMforthePoissonproblem}, imply that
there exists $\tauhtilde$, an auxiliary conformal triangular mesh that refines $\tauh$, obtained by connecting, for all $K\in \tauh$, the $\NVK$ vertices to the center of the ball that realizes assumption (\textbf{D1}) for $K$.
Moreover, it is easy to check that each triangle $T\in \tauhtilde$ is uniformly shape regular. 

Let $\widetilde S ^{p,0}_h(\widetilde \tauh)$ be the set of continuous piecewise polynomials of degree $p$ over the auxiliary triangular decomposition introduced above.
It is well known (see Theorem 4.6 in \cite{babuskasurihpversionFEMwithquasiuniformmesh}) that there exists $\phihp\in \widetilde S ^{p,0}_h(\widetilde \tauh)$ such that for any $u\in H^{k+1}(\Omega)$, $k\in \mathbb R$
\begin{equation} \label{stimaBabuSurimeshtriangolare}
\begin{split}
& || u-\phihp  ||_{1,\Omega} \le \const_1 \frac{h^{\mu }}{p^{k}} || u ||_{k+1,\Omega},\quad k>\frac{1}{2}, \\
& | u-\phihp  |_{1,\Omega} \le \const_2 \frac{h^{k }}{p^{k}} | u |_{k+1,\Omega},\quad k>\frac{1}{2},\; p\ge k \\
\end{split}
\end{equation}
\noindent where $\const_1$ and $\const_2$ are two constants independent on $u$, $p$ and $h$ and where $\mu =\min (p, k)$.

Now, we use $\phihp$ in (\ref{stimaBabuSurimeshtriangolare}) to construct an approximant $u_I \in \Vh$ of $u$.
For this purpose, we modify a particular technique introduced in \cite{VEMchileans}.
\begin{lem} \label{lemmastimahpsudominio}
Under (\textbf{D1}) and (\textbf{D2}), for all $u\in H^{k+1}(\Omega)$, $k\in \mathbb N$, there exists $u_I\in \Vh$ such that
\begin{equation} \label{stimaestensionecileni}
|u-u_I|_{1,\Omega} \le \const \frac{h^{\mu }}{p^{k}} ||u||_{k+1,\Omega},\quad \mu = \min (p , k),
\end{equation}
\noindent where $\const$ is independent on $h$, $p$ and $u$.
\end{lem}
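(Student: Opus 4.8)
The plan is to build $u_I \in \Vh$ out of the global continuous piecewise polynomial $\phihp \in \widetilde S^{p,0}_h(\widetilde\tauh)$ from \eqref{stimaBabuSurimeshtriangolare}, correcting it element by element so that it lands in the virtual space while matching the required boundary behaviour. Since $\phihp$ is a polynomial of degree $p$ on each triangle of the auxiliary refinement $\tauhtilde$, its trace on the boundary $\partial K$ of each polygon $K$ is continuous and piecewise polynomial of degree $p$ on the edges, hence $\phihp|_{\partial K} \in \mathbb B_p(\partial K)$. The idea, following the technique of \cite{VEMchileans}, is therefore to prescribe the boundary values of $u_I$ on each $K$ to coincide with $\phihp$ and to fix the interior behaviour so that $\Delta u_I \in \mathbb P_{p-2}(K)$; this defines a valid element of $\VhK$, and the global continuity of $\phihp$ guarantees that the local pieces glue into a function of $\Vh \subset H^1_0(\Omega)$.

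First I would, for each $K \in \tauh$, define $u_I|_K$ as the unique function in $\VhK$ whose boundary trace equals $\phihp|_{\partial K}$ and whose internal moments against $\mathbb P_{p-2}(K)$ coincide with those of $\phihp$; equivalently one takes $u_I|_K$ to solve the local problem $\Delta u_I = P^{0,K}_{p-2}(\Delta \phihp)$ in $K$ with $u_I = \phihp$ on $\partial K$. Writing $u - u_I = (u - \phihp) + (\phihp - u_I)$ reduces the estimate, via the triangle inequality and \eqref{stimaBabuSurimeshtriangolare}, to bounding $|\phihp - u_I|_{1,K}$ on each element. The function $w_K := \phihp - u_I|_K$ vanishes on $\partial K$, so $w_K \in H^1_0(K)$, and it satisfies a Laplace-type problem with right-hand side controlled by $\phihp$; I would estimate $|w_K|_{1,K}$ by testing the weak formulation against $w_K$ itself and invoking stability/approximation of the $L^2$ projection $P^{0,K}_{p-2}$ together with a Poincaré inequality on $K$, whose constant is controlled by $h_K$ and the shape-regularity constants $\gamma,\widetilde\gamma$.

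The main obstacle, and the place where $p$-explicitness must be handled with care, is bounding $|w_K|_{1,K}$ in a way that does not introduce spurious powers of $p$. Because $w_K$ is a correction whose Laplacian is the difference between $\Delta\phihp$ and its $L^2$-projection of degree $p-2$, the natural bound involves $\| \Delta\phihp - P^{0,K}_{p-2}\Delta\phihp \|_{0,K}$ together with an inverse-type or Poincaré estimate relating this to $|w_K|_{1,K}$. I would control the former by comparing $\phihp$ with a best polynomial approximant and using Lemma \ref{lemmaphBrambleHilbert} (or its consequence \eqref{stimaupi}) so that the resulting bound carries the factor $h^\mu/p^k$ with $\mu=\min(p,k)$; the shape regularity of the auxiliary triangles and the uniform control of the Poincaré constant on $K$ are what keep all constants independent of $h$, $p$, and $u$. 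Summing the squared local estimates over $K \in \tauh$ and using $\|u\|_{k+1,\Omega}^2 = \sum_K \|u\|_{k+1,K}^2$ then yields \eqref{stimaestensionecileni}.
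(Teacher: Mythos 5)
Your overall skeleton is the right one --- take the boundary trace of $u_I$ on each $K$ to be $\phihp$, fix the interior so that $u_I|_K\in\VhK$, and conclude by a triangle inequality --- but the step you yourself flag as the main obstacle is exactly where the argument breaks, and you do not actually close it. Your interior datum is $P^{0,K}_{p-2}(\Delta\phihp)$, so the correction $w_K=\phihp-u_I|_K\in H^1_0(K)$ satisfies an equation whose right-hand side involves $\Delta\phihp$. But $\phihp$ is only a \emph{piecewise} polynomial on the sub-triangles of $\tauhtilde$ contained in $K$: its distributional Laplacian carries jump terms of the normal derivative across the internal edges and is not an $L^2(K)$ function, so the quantity $\|\Delta\phihp-P^{0,K}_{p-2}\Delta\phihp\|_{0,K}$ you propose to estimate is not even well defined. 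If you instead replace $\phihp$ by a genuine degree-$p$ polynomial on $K$ to make sense of the Laplacian, the energy-plus-Poincar\'e route still costs you an inverse inequality of the form $\|\Delta q\|_{0,K}\le \const\, p^2 h_K^{-1}|q|_{1,K}$, and the resulting factor $p^2$ is precisely the spurious power of $p$ that the lemma must avoid. Saying that the bound ``carries the factor $h^\mu/p^k$'' is an assertion, not a proof, at the one point where the $p$-explicitness is genuinely at stake.

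The paper closes this gap with a different and cleaner device. It introduces the \emph{second} approximant $\upi\in S_h^{p,-1}(\tauh)$ from \eqref{stimaupi} (a true polynomial of degree $p$ on the whole polygon $K$, so $\Delta\upi\in\mathbb P_{p-2}(K)$ automatically) and defines $u_I|_K$ by $-\Delta u_I=-\Delta\upi$ in $K$, $u_I=\phihp$ on $\partial K$. Then $u_I-\upi$ is \emph{harmonic} in $K$ with trace $\phihp-\upi$, and harmonic functions minimize the Dirichlet seminorm among all $H^1(K)$ functions with the same trace; hence $|u_I-\upi|_{1,K}\le|\phihp-\upi|_{1,K}$ with constant exactly $1$ --- no Poincar\'e inequality, no $L^2$-projection error, no inverse estimate, and therefore no loss in $p$. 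The triangle inequality then gives $|u-u_I|_{1,K}\le 2|u-\upi|_{1,K}+|u-\phihp|_{1,K}$, and summing over $K$ and invoking \eqref{stimaupi} and \eqref{stimaBabuSurimeshtriangolare} finishes the proof. You should restructure your argument around this energy-minimization property of the harmonic extension; without it (or some equivalent $p$-uniform lifting estimate), the interior correction cannot be controlled.
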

\begin {proof}
Let $\upi$ be the function defined in (\ref{stimaupi}). Let $\phihp$ be the function described in (\ref{stimaBabuSurimeshtriangolare}).
For each $K\in \tauh$, we define $u_I|_K$ the solution of the following problem:
\begin{equation}    \label{definitionuI}
\left\{\begin{array}{ll}
-\Delta u_I = -\Delta \upi & \text{in } K\\
u_I=\phihp & \text{on } \partial K\\
\end{array} \right. .
\end{equation}
It is easy to check that $u_I|_K \in V_h|_K$. Moreover, since $u_I\in H^1(\Omega)$, it holds that $u_I\in \Vh$.\\
Using (\ref{definitionuI}), we can write
\[
\left\{\begin{array}{ll}
-\Delta (u_I-\upi) = 0 & \text{in } K\\
u_I-\upi=\phihp-\upi & \text{on } \partial K\\
\end{array} \right. .
\]
\noindent Therefore, since $(u_I - u_{\pi})$ is harmonic it holds
\begin{equation}    \label{stimadainternoabordo}
|u_I-\upi|_{1,K} = \inf \left\{  |z|_{1,K},\; z\in H^1(K) \mid z= \phihp-\upi \text{ on } \partial K     \right\} \le | \phihp - \upi |_{1,K}.
\end{equation}
\noindent Finally by (\ref{stimadainternoabordo}) we obtain
\begin{equation}\label{splittingchileans}
\begin{split}
|u-u_I|_{1,K} &\le |u-\upi|_{1,K}+|\upi-u_I|_{1,K}
\le |u-\upi|_{1,K}+|\upi-\phihp|_{1,K} \\
&\le 2|u-\upi|_{1,K} + |u-\phihp|_{1,K}.
\end{split}
\end{equation}
The proof is completed by summing on all the elements in \eqref{splittingchileans} and using (\ref{stimaupi}), (\ref{stimaBabuSurimeshtriangolare}).
\end{proof}
\begin{remark} \label{remarkdanormaaseminorma2}
We point out that if $k \le p$ and under the hypothesis of Lemma \ref{lemmastimahpsudominio}, the following holds:
\[
|u-u_I|_{1,\Omega} \le \const \frac{h^{k}}{p^{k}} |u|_{k+1,\Omega},
\]
\noindent $\const$ depending only on $k$, $\gamma$ and $\widetilde{\gamma}$ (introduced in (\textbf{D1}) and in (\textbf{D2})).
\end{remark}

% ------------------------------------------------------------------------------------
\subsubsection{Loading approximation term} \label{subsubsectionthirdterm}
% -----------------------------------------------------------------------------------
%%
It remains to estimate the term $\bigfh$ in (\ref{stimevecchieVEMvolley}). We have the following result.

\begin{lem} \label{lemmastima-loading}
Under assumptions (\textbf{D1}) and (\textbf{D2}), let the loading term $f \in H^{\widetilde{k}+1}(K)$ for all $K \in \tauh$, ${\tilde{k}} \in {\mathbb N}$. Then it holds
\begin{equation} \label{stima-loading}
\bigfh \le \const \frac{h^{\widetilde \mu}}{p ^{\widetilde k + 2}} \left( \sum_{K\in \tauh } ||f||^2_{\widetilde k, K}   \right)^{\frac{1}{2}},
\quad \widetilde \mu = \min (p , \widetilde k +2).
\end{equation}
where $\const$ is a constant independent on $h$, $p$ and $u$.
\end{lem}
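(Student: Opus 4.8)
The plan is to bound $\bigfh$ directly from its defining property \eqref{definitionbigfhp}: since $\bigfh$ is the smallest constant for which $(f,v_h)_{0,\Omega} - \langle \fh, v_h\rangle\underh \le \bigfh\, |v_h|_{1,\Omega}$ holds for all $v_h\in \Vh$, it suffices to estimate the consistency error $(f,v_h)_{0,\Omega} - \langle \fh, v_h\rangle\underh$ by a multiple of $|v_h|_{1,\Omega}$, uniformly in $v_h$. I would work in the case $p\ge 2$, where by the first branch of \eqref{discreteloadingterm} the error localizes as
\[
(f,v_h)_{0,\Omega} - \langle \fh, v_h\rangle\underh = \sum_{K\in\tauh}\int_K \left( f - P^{0,K}_{p-2} f\right) v_h\,dx,
\]
the case $p=1$ (second branch, with the averaged test function) being dealt with separately via a plain Poincar\'e estimate. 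The structural fact I would exploit is that, $P^{0,K}_{p-2}$ being the $L^2(K)$-projector onto $\mathbb P_{p-2}(K)$, the residual $f-P^{0,K}_{p-2}f$ is $L^2(K)$-orthogonal to $\mathbb P_{p-2}(K)$. Hence for every $q\in\mathbb P_{p-2}(K)$ I may subtract $q$ from $v_h$ inside each element integral and apply the Cauchy--Schwarz inequality, obtaining
\[
\int_K \left( f - P^{0,K}_{p-2} f\right) v_h\,dx \le \left\| f - P^{0,K}_{p-2}f\right\|_{0,K}\,\inf_{q\in\mathbb P_{p-2}(K)}\left\| v_h - q\right\|_{0,K}.
\]

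Next I would estimate the two factors using the $hp$ approximation results already proved. For the loading factor, the optimality of the $L^2$-projection gives $\|f-P^{0,K}_{p-2}f\|_{0,K}\le \|f-\Pi^h_{K,p-2}f\|_{0,K}$, so Lemma \ref{lemmaphBrambleHilbert} applied with target degree $p-2$, differentiation order $\el=0$ and $k=\widetilde k$ (recall $f\in H^{\widetilde k+1}(K)$) yields a bound of order $h_K^{\min(p-2,\widetilde k)+1}(p-2)^{-(\widetilde k+1)}\|f\|_{\widetilde k+1,K}$. For the test-function factor, since $v_h\in H^1(K)$ I apply the same lemma in the sharper seminorm form of Remark \ref{remarkdanormaaseminorma} (legitimate because the regularity index $0$ does not exceed the degree $p-2$), with target degree $p-2$, $\el=0$ and $k=0$, which gives $\inf_{q}\|v_h-q\|_{0,K}\le C\,h_K\,(p-2)^{-1}\,|v_h|_{1,K}$. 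Multiplying the two factors and using $\min(p-2,\widetilde k)+2=\min(p,\widetilde k+2)=\widetilde\mu$, the elementwise contribution is bounded by $C\,h_K^{\widetilde\mu}(p-2)^{-(\widetilde k+2)}\,\|f\|_{\widetilde k+1,K}\,|v_h|_{1,K}$.

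Finally I would sum over $K\in\tauh$, use quasiuniformity $h_K\le h$ (and $\widetilde\mu>0$) to replace $h_K^{\widetilde\mu}$ by $h^{\widetilde\mu}$, and apply a discrete Cauchy--Schwarz in the element index together with the additivity $|v_h|_{1,\Omega}^2=\sum_{K}|v_h|_{1,K}^2$ to factor out $|v_h|_{1,\Omega}$:
\[
\sum_{K\in\tauh} \|f\|_{\widetilde k+1,K}\,|v_h|_{1,K} \le \left( \sum_{K\in\tauh} \|f\|^2_{\widetilde k+1,K}\right)^{1/2}|v_h|_{1,\Omega}.
\]
Dividing by $|v_h|_{1,\Omega}$ and passing to the supremum then yields an estimate of the form \eqref{stima-loading}, with the regularity norm $\|f\|_{\widetilde k+1,K}$ naturally appearing on the right-hand side. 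The step I expect to require the most care is the low-degree bookkeeping: the whole argument approximates with polynomials of degree $p-2$, so the factors $(p-2)^{-1}$ agree with $p^{-1}$ only up to a constant for $p$ bounded away from $2$, and the borderline value $p=2$---where $\mathbb P_{p-2}=\mathbb P_0$ and the inner estimate degenerates to a Poincar\'e inequality with an $h_K$-scaled, $p$-independent constant---has to be inspected by hand to ensure the passage from $(p-2)$ to $p$ in the denominator is uniform; the same applies to the separate treatment of $p=1$.
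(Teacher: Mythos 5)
Your proposal is correct and follows essentially the same route as the paper's proof: localize the consistency error, use the $L^2(K)$-orthogonality of $P^{0,K}_{p-2}$ to insert a degree-$(p-2)$ approximant of $v_h$, apply Cauchy--Schwarz, bound both factors with the $hp$ approximation lemma at degree $p-2$ (gaining $h_K\,p^{-1}\,|v_h|_{1,K}$ from the test function), and conclude by discrete Cauchy--Schwarz over the elements. The paper handles the low-degree issue exactly as you anticipate, working with $\widetilde p=\max(1,p-2)$ and absorbing the passage from $\widetilde p$ to $p$ into the constant, and likewise defers $p=1$ to the original VEM analysis.
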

\begin {proof}
Since the case $p=1$
has been already analysed in \cite{VEMvolley}, we only consider the case $p\ge 2$.
Let $v_h\in\Vh$. Let $P ^{0,K}_{p-2}$ be the $L^2$-projector on polynomials of degree $p-2$ over the polygon $K$, for all $K \in \tauh$. We get by \eqref{discreteloadingterm}
\[
\begin{split}
(f,v_h)_{0,\Omega} - \langle \fh, v_h \rangle _h&
= \sum_{K \in \tauh} (f-P_{p-2}^{0,K}f, v_h)_{0,K} = \sum_{K \in \tauh} (f-P_{p-2}^{0,K}f, v_h- P _{p-2}^{0,K}v_h)_{0,K} \\
            & \le \sum_{K \in \tauh} || f-P _{p-2}^{0,K}f  ||_{0,K} || v_h-P_{p-2}^{0,K}v_h ||_{0,K} \\
            &\le \sum_{K \in \tauh} || f- f^{\pi}_{p-2}|_K ||_{0,K} || v_h-v_{p-2}^{\pi}|_K ||_{0,K} ,
\end{split}
\]
where $f^{\pi}_{p-2}|_K$ and $v^{\pi}_{p-2}|_K$ are the piecewise polynomial functions of degree $p-2$ that realize the bound (\ref{stimeqkonK}) with $\el=0$ on each $K\in \tauh$.
An easy adaptation of Lemma \ref{lemmapestimatesonboundeddomain} (and so also of Lemma 4.1 in \cite{babuskasurihpversionFEMwithquasiuniformmesh} or Lemma 3.1 in \cite{BabuSurioptimalconvergenceestimatepmethods})
implies that, given $\widetilde p = \max (1,p-2)$,
\[
\begin{split}
(f,v_h)_{0,\Omega} - \langle \fh, v_h \rangle & \le \const \sum_{K \in \tauh} \frac{h_K^{\min ((p-2)+1, \widetilde k +1)}}{\widetilde p ^{\widetilde k + 1}} ||f||_{\widetilde k+1, K} \frac{h_K}{\widetilde p} |v_h|_{1,K} \\
            & \le \const \frac{h^{\min (p, \widetilde k +2 )}}{\widetilde p ^{\widetilde k + 2}}   \left( \sum_{K \in \tauh} ||f||^2_{\widetilde k+1, K}  \right)^{\frac{1}{2}} |v_h|_{1,K}.
\end{split}
\]
The final result follows by the definition of $\bigfh$ in \eqref{definitionbigfhp} and substituting ${\widetilde p}$ with $p$, up to a change of the constant $\const$.
\end{proof}
By observing that, if the solution $u$ of \eqref{continuousproblemweakformulation} is in $H^{k+1}(\Omega)$ then $f \in H^{k-1}(\Omega)$, Lemma \ref{lemmastima-loading} immediately gives also the following corollary.
\begin{cor} \label{corol-loading}
Under assumptions (\textbf{D1}) and (\textbf{D2}), let the solution $u$ of \eqref{continuousproblemweakformulation} be in $H^{k+1}(\Omega)$, $k\in \mathbb N$.
Then it holds
\begin{equation} \label{corol-loading-eq}
\bigfh \le \const(k, \gamma, \widetilde \gamma) \frac{h^{\mu}}{p ^{k}} || u ||_{k+1, \Omega} ,
\quad \mu = \min (p , k).
\end{equation}
\end{cor}

Finally, we note that an analogous observation as in Remark \ref{remarkdanormaaseminorma} and Remark \ref{remarkdanormaaseminorma2} holds also for Corollary \ref{corol-loading}, yielding
\begin{equation}
\bigfh \le \const (k, \gamma, \widetilde \gamma) \frac{h^{k}}{ p ^{k}}  |u|_{k+1,\Omega} \: , \quad   1\le k+1 \le p+1 .
\end{equation}

\begin{remark} \label{remarkenanchementloadingterm}
We stress the fact that using the same enhancing strategy introduced in \cite{equivalentprojectorsforVEM} it is possible to obtain a more accurate load approximation. Nevertheless, the global order of convergence of the method does not change due to the presence of the other terms in the error estimates.
\end{remark}

% ---------------------------------------------------------------------------------------------------------------------
\subsection{$hp$ estimate in the energy norm} \label{subsectionhpestimate}
% ---------------------------------------------------------------------------------------------------------------------
Finally, we are able to show the following convergence result.
\begin{thm} \label{theoremstimehpfinali}
Let $k\in \mathbb N$, $k>\frac{1}{2}$ and let the mesh assumptions (\textbf{D1}) and (\textbf{D2}) hold. Let $u$ and $u_h$ be respectively the solution of problems (\ref{continuousproblemweakformulation}) and (\ref{discreteproblem}), with $u\in H^{k+1} (\Omega)$.
Then, the following $hp$ estimates hold:
\begin{eqnarray} \label{stimehpfinalia}
&& |u-\uh|_{1,\Omega} \le \const_1 \frac{h^{\mu}}{p^{k}}||u||_{k+1,\Omega}, \quad \mu = \min (p, k), \\
&& |u-\uh|_{1,\Omega} \le \const_2 \frac{h^{k}}{p^{k}} |u|_{k+1,\Omega}, \quad \text{if } k \le p,
\label{stimehpfinalib}
\end{eqnarray}
where $\const_1$ and $\const_2$ are two constants independent on $h$, $p$ and $u$.
\end{thm}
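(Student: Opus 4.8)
The plan is to invoke the abstract best approximation estimate \eqref{stimevecchieVEMvolley}, which already decomposes the discretization error into three separate pieces, and then simply to substitute into it the three individual bounds established one by one in Sections \ref{subsubsectionfirstterm}--\ref{subsubsectionthirdterm}. Since every term on the right-hand side of \eqref{stimevecchieVEMvolley} has by now been controlled explicitly in both $h$ and $p$, the proof of the theorem reduces essentially to an assembly of already-proved ingredients, with no new estimation required.

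Concretely, I would first recall that by \eqref{stimevecchieVEMvolley} there exists a constant $\const$, depending only on the stability constants $\alpha_*$ and $\alpha^*$ from assumption (\textbf{A2}), such that
\[
|u-\uh|_{1,\Omega} \le \const \left( \inf_{\upi \in S_h^{p,-1}(\tauh)} |u-\upi|_{h,1,\Omega} + \inf_{u_I \in \Vh} |u-u_I|_{1,\Omega} + \bigfh \right).
\]
I would then bound the infimum over $\upi$ by the particular choice of piecewise polynomial leading to \eqref{stimaupi}, the infimum over $u_I$ by the explicit virtual interpolant constructed in Lemma \ref{lemmastimahpsudominio}, namely estimate \eqref{stimaestensionecileni}, and the residual term $\bigfh$ by Corollary \ref{corol-loading}, that is \eqref{corol-loading-eq}. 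Each of these three bounds has exactly the form $\const\, h^{\mu} p^{-k} \|u\|_{k+1,\Omega}$ with $\mu = \min(p,k)$, so adding the three contributions and collecting the three constants into a single $\const_1$ yields \eqref{stimehpfinalia} at once.

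To obtain the sharper seminorm estimate \eqref{stimehpfinalib} in the regime $k \le p$, I would repeat the identical substitution, but replace each full-norm bound by its seminorm counterpart, which is available precisely when $k \le p$: for the polynomial term the second line of \eqref{stimaupi}, for the virtual interpolation term the estimate stated in Remark \ref{remarkdanormaaseminorma2}, and for the loading term the seminorm version of Corollary \ref{corol-loading} obtained through Remark \ref{remarkdanormaaseminorma}. In this regime $\mu = k$, so all three pieces now share the common rate $h^{k} p^{-k} |u|_{k+1,\Omega}$ and collapse to \eqref{stimehpfinalib} after absorbing constants into $\const_2$.

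The only points requiring care here are bookkeeping rather than analytic ones, so there is no genuine obstacle to speak of. I must verify that the hypotheses of the three cited results are simultaneously satisfied; in particular that $u \in H^{k+1}(\Omega)$ with $k \in \mathbb N$ suffices, and that the restriction $k > \tfrac12$ enters only through the triangular-mesh estimate \eqref{stimaBabuSurimeshtriangolare} that feeds Lemma \ref{lemmastimahpsudominio}. I must also check that the final constants remain independent of $h$, $p$ and $u$, depending at most on $k$, $\gamma$, $\widetilde\gamma$, $\alpha_*$ and $\alpha^*$. Once this consistency of hypotheses and of constant dependencies is confirmed, the two asserted estimates follow immediately from \eqref{stimevecchieVEMvolley} by the substitutions described above.
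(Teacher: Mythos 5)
Your proposal is correct and follows essentially the same route as the paper: the published proof is precisely the one-line combination of \eqref{stimevecchieVEMvolley} with \eqref{stimaupi}, \eqref{stimaestensionecileni} and \eqref{corol-loading-eq}, which is exactly the assembly you describe. Your additional remarks on the seminorm variant for $k \le p$ and on tracking the constant dependencies merely make explicit what the paper leaves implicit.
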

\begin{proof}
It suffices to combine (\ref{stimevecchieVEMvolley}), (\ref{definitionbigfhp}), (\ref{stimaupi}), (\ref{stimaestensionecileni}) and (\ref{corol-loading-eq}).
\end{proof}
\begin{remark} \label{remarkstimehpL2}
Let the domain $\Omega$ be convex. Following the argument shown in \cite{VEMelasticity} (and, if $p=1,2$ suitably changing the definition of the discrete loading term \eqref{discreteloadingterm})
and applying approximation results similar to those shown above, one can also easily derive $L^2$ estimates of the form:
\begin{equation} \label{stimeL2hpfinalia}
||u-\uh||_{0,\Omega} \le \const(k,\gamma, \widetilde \gamma) \frac{h^{\mu + 1}}{p^{k+1}}||u||_{k+1,\Omega}, \quad \mu = \min (p, k),
\end{equation}
with the usual modification for the case $k \le p$ and where $\const$ is a constant independent on $h$ and $p$.
\end{remark}

% ---------------------------------------------------------------------------------------------------------------------
\section{Exponential convergence for analytic solutions} \label{sectionexponentialconvergenceforanalyticfunction}
% ---------------------------------------------------------------------------------------------------------------------
In this section, we derive an exponential convergence result for analytic solutions, under a further regularity assumption on the decomposition.
We recall that we are given a polygonal decomposition $\tauh$ and a triangular auxiliary subdecomposition $\tauhtilde$ (described in Section \ref{subsectionsecondterm}).
Given $K$ polygon in $\tauh$, we define $Q=Q(K)$ as any of the smallest square containing $K$;
besides, given $\widetilde K$ triangle in $\tauhtilde$, we define $\widetilde Q= \widetilde Q(\widetilde  K)$
the parallelogram given by $\overline{\widetilde Q} = \overline{\widetilde K \cup \widetilde K ^*}$, where $\widetilde K^*$ is the reflection of $\widetilde K$ with respect to a midpoint of anyone of its edges.
We point up that there are three possible $\widetilde Q (\widetilde K)$; we fix arbitrarily one of them. Next, we define:
\begin{equation} \label{equationOmegaext}
\Omegaext= \Omegaext (h):= \Omega \cup \left( \bigcup_{K\in \tauh} Q(K) \right)  \cup \left( \bigcup_{\widetilde K \in \tauhtilde} \widetilde Q(\widetilde K) \right).
\end{equation}
We observe that dist$(x,\Omega) \le d(h)$, $\forall x\in \Omegaext$, $d(\cdot)$ being a non-decreasing function in $h$.
Therefore, $\forall h\le \overline h$ one has $d(h)\le d(\overline h)$ and thus $\Omegaext$ is an uniformly bounded domain in terms of $h$, if $h$ is bounded.
We demand for the following regularity assumption on the mesh:
\begin {enumerate}
\item[$(\mathbf{D3})$] there exists $N\in \mathbb N$ independent on $h$ such that there are at most $N$ overlapping squares in the collection $\{Q(K)\}$ and $N$ parallelograms in the collection $\{\widetilde Q (\widetilde K)\}$, i.e.
                                      for all $Q(K)$ in $\{Q(K)\}$ and for all $\widetilde Q (\widetilde K)$ in $\{\widetilde Q (\widetilde K)\}$,
                  given $I_{K'}:=\{Q(K)\mid Q(K) \cap Q(K') \ne \emptyset\}$ and $\widetilde I_{\widetilde K'}:=\{\widetilde Q(\widetilde K)\mid \widetilde Q(\widetilde K) \cap \widetilde Q(\widetilde K') \ne \emptyset\}$,
                  it holds that card($I_{K'}$),\, card($\widetilde I_{\widetilde K'}$)$\le N$, $\forall K\in \tauh$ and $\forall \widetilde K \in \tauhtilde$.
\end {enumerate}
We note that, given $u \in H^{k+1} (\Omega)$, $k\in \mathbb N$, under assumption (\textbf{D3}), the following holds:
\[
\sum _{K\in \tauh} || u ||^2_{k+1,Q(K)} \le N ||u||^2_{k+1,\Omegaext},\quad \sum _{\widetilde K\in \tauhtilde} || u ||^2_{k+1,\widetilde Q (\widetilde K)} \le N ||u||_{k+1,\Omegaext},
\]
with $\Omegaext$ defined in \eqref{equationOmegaext}.
In order to obtain exponential convergence estimates for analytic functions, we must show bounds analogous to \eqref{stimehpfinalia} and \eqref{stimehpfinalib} by expliciting the dependence of the constants $\const_1$ and $\const_2$ on $k$, i.e. on the Sobolev regularity of the solution $u$.
For this reason, we split this section in two parts.
In Section \ref{subsectionhpestimateusinganoverlappingsquaremethod}, we derive estimates of type \eqref{stimehpfinalia} and \eqref{stimehpfinalib} with the dependence on $k$ explicited;
in Section \ref{subsectionexponentialconvergence}, we derive an exponential convergence estimate.
% ---------------------------------------------------------------------------------------------------------------------
\subsection{$hp$ estimate using an overlapping square method} \label{subsectionhpestimateusinganoverlappingsquaremethod}
% ---------------------------------------------------------------------------------------------------------------------
In this Section, we use an overlapping square technique which allows us, under assumption (\textbf{D3}), to explicit the dependence on the Sobolev regularity in the estimate proven in Lemma \ref{lemmapestimatesonboundeddomain}
(consequently also on Lemmata \ref{lemmaphBrambleHilbert} and \ref{lemmastima-loading}) and on Lemma \ref{lemmastimahpsudominio}.
Finally, we restate Theorem \ref{theoremstimehpfinali} on a proper extended domain, under assumption (\textbf{D3}).
We note that the polynomial approximation which allows to have an estimate in $p$ will be different from that discussed in Section \ref{sectionapproximationresult};
such a polynomial, introduced by \Babuska\text{ }and Suri in \cite{babuskasurihpversionFEMwithquasiuniformmesh} and \cite{BabuSurioptimalconvergenceestimatepmethods}, is a Fourier-type approximation.
We decide to use here a different choice, by choosing an approximant of Legendre type whose properties are studied for instance in \cite{SchwabpandhpFEM}.
The reason for this change is discussed in Remark \ref{remarkBabuvsSchwab}.
% ---------------------------------------------------------------------------------------------------------------------
\subsubsection{A first local estimate} \label{subsubsectionafirstlocalestimate}
% ---------------------------------------------------------------------------------------------------------------------
Here, we give an explicit representation of the constant $\const$
in \eqref{estimatelemma41} in terms of $k$, $k$ being the Sobolev
regularity of the target function. We start by showing the
counterpart of Lemma \ref{lemmapestimatesonboundeddomain}. As a
minor note, we point out that the estimate of Lemma
\ref{lemmacounterpartlemma41} does not require explicitely a shape
regularity condition on the polygons, differently from Lemma
\ref{lemmapestimatesonboundeddomain}.
\begin{lem}\label{lemmacounterpartlemma41}
Let $\Qhat$ be the square $[-1,1]^2$.
Let $\widehat{K} \subseteq \widehat Q$ be a polygon with barycenter $\mathbf x_{\widehat K}=\mathbf{0}$. Moreover, assume that $p\ge 2k$, with $k\in \mathbb N$.
Then, there exists a family of projection operators $\{ \widehat{\Pi}_{\widehat Q, p}   \}$, $p=1,2,\dots$ with $\widehat{\Pi}_{\widehat Q, p} : H^{2}(\Qhat) \rightarrow \mathbb P_p (\Qhat)$
such that, for any $\widehat u \in H^{k+1} (\Qhat)$, it holds
\begin{equation} \label{estimatecounterpartlemma41}
| \widehat u - \widehat{\Pi}_{\widehat Q, p} \widehat u |_{1,\widehat K} \le \const 2^k e^k p^{-k} | \widehat u |_{k+1,\Qhat}
\end{equation}
with $\const$ a constant independent on $u$, $k$ and $p$.
\end{lem}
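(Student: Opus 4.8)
The plan is to construct $\widehat{\Pi}_{\widehat Q, p}$ directly on the reference square $\Qhat=[-1,1]^2$ as a tensor product of one-dimensional Legendre-type projectors, following \cite{SchwabpandhpFEM}, rather than the Fourier-type operator of \cite{babuskasurihpversionFEMwithquasiuniformmesh}. The decisive structural point — and the reason no shape regularity on $\widehat K$ is needed here, in contrast with Lemma \ref{lemmapestimatesonboundeddomain} — is that I would first estimate the error on the \emph{whole} square $\Qhat$ and only at the very end use the trivial inclusion bound
\begin{equation*}
|\widehat u - \widehat{\Pi}_{\widehat Q, p}\widehat u|_{1,\widehat K} \le |\widehat u - \widehat{\Pi}_{\widehat Q, p}\widehat u|_{1,\Qhat},
\end{equation*}
valid because $\widehat K \subseteq \Qhat$. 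Hence the geometry of $\widehat K$ plays no role beyond this inclusion, which is precisely why the shape regularity hypotheses can be dropped.

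First I would invoke the sharp one-dimensional Legendre approximation result from \cite{SchwabpandhpFEM}: on $I=(-1,1)$ there is a projector $\pi_p$ onto $\mathbb P_p(I)$, reproducing polynomials, such that for $\widehat u \in H^{k+1}(I)$ and $p\ge k$,
\begin{equation*}
|\widehat u - \pi_p \widehat u|_{1,I}^2 \le \const\, \frac{(p-k)!}{(p+k)!}\, |\widehat u|_{k+1,I}^2,
\end{equation*}
the crucial feature being that the $k$-dependence is carried \emph{explicitly} by the factorial ratio. Setting $\widehat{\Pi}_{\widehat Q, p} := \pi_p \otimes \pi_p$ and tensorizing this bound over the two coordinate directions on $\Qhat$ yields, after collecting the mixed contributions,
\begin{equation*}
|\widehat u - \widehat{\Pi}_{\widehat Q, p}\widehat u|_{1,\Qhat}^2 \le \const\, \frac{(p-k)!}{(p+k)!}\, |\widehat u|_{k+1,\Qhat}^2 .
\end{equation*}
Since $\widehat u \in H^{k+1}(\Qhat) \subseteq H^2(\Qhat)$, the operator is well defined on $H^2(\Qhat)$ as the statement requires.

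The technical core — and the step I expect to be the main obstacle — is to convert the factorial ratio into the clean factor $2^k e^k p^{-k}$ announced in \eqref{estimatecounterpartlemma41}, using the hypothesis $p \ge 2k$. Here I would apply Stirling's formula to $\Gamma(p-k+1)$ and $\Gamma(p+k+1)$; the cancellation of the exponential factors is only partial and leaves a surviving $e^{k}$, while writing $\tfrac{(p-k)!}{(p+k)!} = \prod_{j=1}^{2k}(p-k+j)^{-1}$ and bounding each of the $2k$ factors from below by $p-k \ge p/2$ (this is exactly where $p\ge 2k$ enters) produces the factor $2^k$ together with the target power $p^{-k}$ after taking the square root. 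Combining this estimate with the tensorized bound and the inclusion bound gives
\begin{equation*}
|\widehat u - \widehat{\Pi}_{\widehat Q, p}\widehat u|_{1,\widehat K} \le \const\, 2^k e^k p^{-k}\, |\widehat u|_{k+1,\Qhat},
\end{equation*}
with $\const$ independent of $\widehat u$, $k$ and $p$, which is the assertion. The only care required is to check that the various constants generated along the way (from the tensorization and from the Stirling estimate) can be absorbed into a single $k$- and $p$-independent $\const$; this is immediate, since all $k$-dependence has been isolated in the explicit $2^k e^k$ prefactor.
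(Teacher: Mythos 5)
Your overall strategy (a Legendre-type tensor-product construction on the reference square following \cite{SchwabpandhpFEM}, conversion of the factorial ratio $\tfrac{(p-k)!}{(p+k)!}$ into a power of $p$, and the final restriction $|\cdot|_{1,\widehat K}\le|\cdot|_{1,\Qhat}$) is the same as the paper's, and your observation that the geometry of $\widehat K$ enters only through the inclusion $\widehat K\subseteq\Qhat$ is exactly the authors' point. There is, however, one genuine gap: the operator $\pi_p\otimes\pi_p$ maps into $\mathbb Q_p(\Qhat)$, the polynomials of degree $p$ \emph{in each variable}, and $\mathbb Q_p(\Qhat)\not\subseteq\mathbb P_p(\Qhat)$ --- it is only contained in $\mathbb P_{2p}(\Qhat)$. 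The lemma requires the range to be $\mathbb P_p(\Qhat)$, and this matters downstream because the VEM consistency argument relies on $\mathbb P_p(K)\subseteq\VhK$, not on $\mathbb Q_p(K)$. The paper's remedy is to apply the $\mathbb Q$-type estimate with the degree halved, using $\mathbb Q_{\lfloor p/2\rfloor}(\Qhat)\subseteq\mathbb P_{p}(\Qhat)$; the substitution $p\mapsto p/2$ in the intermediate bound $\const\, e^k p^{-k}$ is the true source of both the factor $2^k$ in \eqref{estimatecounterpartlemma41} and the hypothesis $p\ge 2k$ (needed so that $\lfloor p/2\rfloor\ge k$, the admissible range of the one-dimensional estimate).

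By contrast, you manufacture the $2^k$ from the elementary bound $\tfrac{(p-k)!}{(p+k)!}=\prod_{j=1}^{2k}(p-k+j)^{-1}\le(p/2)^{-2k}$ under $p\ge 2k$. That computation is correct in itself (and in fact yields $2^kp^{-k}$ with no $e^k$, so combining it with Stirling double-counts), but once the range defect is repaired by replacing $p$ with $\lfloor p/2\rfloor$, your product bound would require $\lfloor p/2\rfloor\ge 2k$, i.e.\ $p\ge 4k$, which is stronger than the lemma's hypothesis. The correct accounting is the paper's: use Stirling (valid for $\lfloor p/2\rfloor\ge k$) to obtain $e^k(\lfloor p/2\rfloor)^{-k}$, and let the degree halving alone produce the $2^k$. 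With that single modification --- take $\pi_{\lfloor p/2\rfloor}\otimes\pi_{\lfloor p/2\rfloor}$ instead of $\pi_p\otimes\pi_p$ --- your argument closes and coincides with the paper's. A minor further remark: the paper's construction also records the vertex interpolation property $\hatphi_p(V_i)=\uhat(V_i)$, which is not needed for the present statement but is reused in the proof of Lemma \ref{lemmacounterpartlemma43}.
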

\begin{proof}
Let $\Qhat = [-1,1]^2$. Let $\{V_i\}_{i=1}^4$ be the set of vertices of $\Qhat$. Let $\uhat \in H^{k+1}(\Qhat)$.
Let $\mathbb Q _{p}(\Theta)$ be the set of polynomials of maximum degree $p$ in each variable over a domain $\Theta \in \mathbb R^2$.
As an easy consequence of Lemma 4.67 in \cite{SchwabpandhpFEM}, it is possible to show the existence of $\hatphi _p \in \mathbb Q_p (\Qhat)$ such that:
\begin{equation} \label{interpolationproperty}
\hatphi _p(V_i) = \uhat (V_i),\quad \forall i=1,\dots,4
\end{equation}
and
\begin{equation} \label{stimaSchwablemma467}
|\uhat - \hatphi _p|^2_{1,\Qhat} \le 2 \left \{  \frac{(p-k)!}{(p+k)!} + \frac{1}{p(p+1)} \cdot \frac{(p-k+1)!}{(p+k-1)!}    \right\} |\widehat u|^2_{k+1,\Qhat}.
\end{equation}
Since $p\ge k$, it is easy to show that \eqref{stimaSchwablemma467} leads to the following simpler bound:
\begin{equation} \label{stimeinstileSchwabsuquadratodiriferimento}
|\uhat - \hatphi _p|_{1,\Qhat} \le \const e^{k} p ^{-k} |\uhat|_{k+1,\Qhat}, \text{ with } \const=\sqrt{e}.
\end{equation}
In order to show this, we perform the computations only on the first term in the right hand side of \eqref{stimaSchwablemma467} since the treatment of the other one is analogous.
Using Stirling fomula:
\[
\frac{(p-k)!}{(p+k)!} = \frac{(p-k)^{(p-k)} \cdot e^{-(p-k)} \cdot \sqrt{2\pi(p-k)} \cdot e^{\theta_{p-k}} }{(p+k)^{(p+k)} \cdot e^{-(p+k)} \cdot \sqrt{2\pi(p+k)} \cdot e^{\theta_{p+k}} },\;
\text{with }\frac{1}{12n+1}\le \theta _{n} \le \frac{1}{12n},\, \forall n\in \mathbb N.
\]
Then:
\begin{equation} \label{stimefattoriali}
\frac{(p-k)!}{(p+k)!} \le p^{-2k} \cdot e^{2k} \cdot e^{\theta_{p-k}}\le \const e^{2k} p^{-2k},\;\text{with } \const=e.
\end{equation}
At this point, we observe that $\mathbb Q_p(\Qhat) \subseteq \mathbb P_{2p}(\Qhat)$.
This fact and \eqref{stimeinstileSchwabsuquadratodiriferimento} immediately imply that there exists $\hatphi _p \in \mathbb P_p(\Qhat)$ which interpolates $\widehat u$ at the vertices of $\Qhat$ as in \eqref{interpolationproperty} and which satisfies
\[
|\uhat - \hatphi _p|_{1,\Qhat} \le \const 2^k e^k p^{-k} |\uhat|_ {k+1,\Qhat},
\]
provided that $p\ge 2k$.
We note that, owing to the fact that $\widehat K \subseteq \Qhat$, it holds
\[
|\uhat - \hatphi _p|_{1,\widehat K} \le |\uhat - \hatphi _p|_{1, \Qhat} \le \const 2^k e^k p^{-k} |\widehat u|_{k+1, \Qhat}.
\]
In order to conclude, it suffices to define $\widehat \Pi _{\widehat Q, p} \uhat := \hatphi _p$.
\end{proof}
The counterpart of Lemma \ref{lemmaphBrambleHilbert} follows.
\begin{lem} \label{lemmacounterpartlemma42}
Let $K\in \tauh$. Let $Q=Q(K)$ be the smallest square containing $K$ and let $u\in H^{k+1}(Q)$. Let $p\ge 2k$.
Then, there exists a sequence of projection operators $\Pi_{Q,p}^h$, $p=1,2,\dots$ with $\Pi_{Q,p}^h: H^{2} (Q) \longrightarrow \mathbb P_p(Q)$ such that for any $k \in \mathbb N$
\[
|u - \Pi _{Q,p}^h u|_{1,K} \le \const M^k \frac{h_K^{\mu}}{p^{k}} || u ||_{k+1,Q},\quad \mu = \min (p,k),
\]
where $\const$ and $M$ are two constants independent on $k$, $h$, $p$ and $u$.
\end {lem}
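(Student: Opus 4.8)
The plan is to follow the template of the proof of Lemma~\ref{lemmaphBrambleHilbert}, but to substitute the reference estimate of Lemma~\ref{lemmapestimatesonboundeddomain} with the sharper Lemma~\ref{lemmacounterpartlemma41}. The essential simplification is that Lemma~\ref{lemmacounterpartlemma41} already delivers its right-hand side in terms of the $H^{k+1}$ \emph{seminorm} on the reference square, so the additional add-and-subtract plus Scott--Dupont step used in Lemma~\ref{lemmaphBrambleHilbert} becomes unnecessary; the whole argument reduces to a change of variables.

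First I would introduce the affine scaling that maps $Q=Q(K)$ onto the reference square $\Qhat=[-1,1]^2$. Denoting by $H_K$ the side length of $Q$ and by $\mathbf{c}_Q$ its center, set $G(\mathbf{x}):=\frac{2}{H_K}(\mathbf{x}-\mathbf{c}_Q)$, so that $G(Q)=\Qhat$. Let $\widehat{K}:=G(K)\subseteq \Qhat$ and $\uhat:=u\circ G^{-1}$, and define the projector by push-forward, $\Pi_{Q,p}^h u:=(\widehat{\Pi}_{\Qhat,p}\uhat)\circ G$, with $\widehat{\Pi}_{\Qhat,p}$ the operator of Lemma~\ref{lemmacounterpartlemma41}. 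I note that $\widehat{K}$ need not be centred at the origin; this is harmless because, as inspection of the proof of Lemma~\ref{lemmacounterpartlemma41} shows, the barycentre hypothesis is never used and the bound \eqref{estimatecounterpartlemma41} holds for any polygon $\widehat{K}\subseteq\Qhat$.

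The core of the argument is then two scaling identities for the planar seminorms under $G$ (with factor $\lambda=2/H_K$): the $H^1$-seminorm is scale invariant, $|u-\Pi_{Q,p}^h u|_{1,K}=|\uhat-\widehat{\Pi}_{\Qhat,p}\uhat|_{1,\widehat{K}}$, while $|\uhat|_{k+1,\Qhat}=(H_K/2)^{k}\,|u|_{k+1,Q}$. Applying Lemma~\ref{lemmacounterpartlemma41} to the right-hand side of the first identity and inserting the second, I obtain $|u-\Pi_{Q,p}^h u|_{1,K}\le \const\,2^k e^k p^{-k}(H_K/2)^k|u|_{k+1,Q}=\const\,e^k H_K^{\,k}\,p^{-k}\,|u|_{k+1,Q}$. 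Since $Q$ is the smallest square containing $K$ and $\diam(K)=h_K$, one has $H_K\le h_K$, whence $H_K^{\,k}\le h_K^{\,k}$; moreover the hypothesis $p\ge 2k$ forces $\mu=\min(p,k)=k$, so $h_K^{\,k}=h_K^{\,\mu}$. Bounding the seminorm by the full norm finally yields the claim with $M=e$ and $\const$ the constant of Lemma~\ref{lemmacounterpartlemma41}.

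I expect no genuine obstacle: the only points requiring care are the bookkeeping of the scaling powers (in two dimensions the $H^1$-seminorm is dimensionless, which is exactly what lets the left-hand side transfer with no $h_K$-factor), the elementary geometric bound $H_K\le h_K$, and the remark that the centring hypothesis of Lemma~\ref{lemmacounterpartlemma41} plays no role. The conceptual work has already been carried out in Lemma~\ref{lemmacounterpartlemma41}, where the explicit $k$-dependence $2^k e^k$ of the constant was extracted via the Stirling estimate.
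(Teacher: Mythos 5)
Your proposal is correct and follows essentially the same route as the paper, which likewise defines $\Pi_{Q,p}^h$ by push-forward under the affine map from $Q$ to $\Qhat$ and invokes Lemma~\ref{lemmacounterpartlemma41} together with the standard two-dimensional scaling identities; you merely supply the bookkeeping (scale invariance of the $H^1$-seminorm, the factor $(H_K/2)^k$, the bound $H_K\le h_K$, and the observation that the barycentre hypothesis in Lemma~\ref{lemmacounterpartlemma41} is never used) that the paper leaves implicit. The only cosmetic point is that your map $G$ should in general include a rotation, since the smallest enclosing square need not be axis-aligned --- the paper's $F$ is explicitly a rototranslation plus dilatation --- but this changes nothing as integer-order Sobolev seminorms are rotation invariant.
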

\begin{proof}
It suffices to apply Lemma \ref{lemmacounterpartlemma41} and a classical scaling argument.
The mapping $F$ between $Q$ and $\Qhat$ is the composition of a rototraslation and a dilatation in $\mathbb R ^2$.
The polygon $\widehat K \in \Qhat$ and the operator $\Pi ^h _{Q,p} u$ will be simply given by $\widehat  K = F(K)$ and $\Pi ^h _{Q,p} u = (\Pi ^h _{Q,p} (u\circ F^{-1}))\circ F$ respectively.
\end{proof}
As done in Section \ref{subsubsectionfirstterm}, we define $\upi \in S_h^{p, -1}(\tauh)$, $S_h^{p, -1}(\tauh)$ being introduced at the beginning of Section \ref{subsectionsomeapproximationresults}, as
\[
\upi|_K= (\Pi _{Q,p}^h u)|_K,\,\text{with }Q=Q(K),\; \forall K\in \tauh.
\]
Owing to assumption (\textbf{D3}) and Lemma \ref{lemmacounterpartlemma42}, we are able to give the following global estimate:
\begin{equation} \label{stimaglobaledopolemma52}
|u-u_{\pi}|_{h,1,\Omega} \le \const A^k\frac{h^{\mu}}{p^k} || u ||_{k+1,\Omegaext},\; \mu =\min (p,k),
\end{equation}
where $\Omegaext$ is defined in \eqref{equationOmegaext} and $\const$ and $A$ are two constants independent on $h$, $p$, $k$, $\gamma$, $\widetilde \gamma$ and $u$ ($A$ is independent also on $N$).

% ---------------------------------------------------------------------------------------------------------------------
\subsubsection {A second local estimate} \label{subsubsectionasecondlocalestimate}
% ---------------------------------------------------------------------------------------------------------------------
In the present section, we give an explicit representation of the constant $\const$ in \eqref{stimaestensionecileni} in terms of $k$.
We point out that here the shape regularity assumption is needed; in fact, the usual scaling arguments used herein are based on affine mappings of shape regular triangles into the master triangle.
\begin{lem}     \label{lemmacounterpartlemma43}
Under assumptions (\textbf{D1}), (\textbf{D2}) and (\textbf{D3}), provided that $p\ge 2k$,
there exists $u_I \in V_h$ such that
\begin{equation} \label{stimalemma53}
|u-u_I|_{1,\Omega} \le \const \cdot B ^k \frac{h^{k}}{p^{k}} |u|_{k+1,\Omegaext},
\end{equation}
where $\const$ and $B$ are two constants independent on $k$, $p$, $h$ and $u$ ($B$ is independent also on $N$).
\end{lem}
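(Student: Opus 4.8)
The plan is to mimic the proof of Lemma \ref{lemmastimahpsudominio}, but now tracking the explicit dependence on the Sobolev index $k$ and replacing each ingredient by its ``overlapping square/parallelogram'' counterpart so that the global norms collapse onto $\Omegaext$ via assumption (\textbf{D3}). Concretely, I would construct $u_I \in V_h$ element by element exactly as in \eqref{definitionuI}: on each $K\in\tauh$ let $u_I|_K$ solve $-\Delta u_I = -\Delta \upi$ in $K$ with boundary datum $u_I = \phihp$ on $\partial K$, where now $\upi|_K = (\Pi^h_{Q,p}u)|_K$ is the piecewise polynomial from Lemma \ref{lemmacounterpartlemma42} (with $Q = Q(K)$) and $\phihp \in \widetilde S^{p,0}_h(\tauhtilde)$ is a continuous piecewise polynomial on the auxiliary triangular mesh. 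Since $\Delta\upi|_K \in \mathbb P_{p-2}(K)$ and $\phihp|_{\partial K}$ is a continuous edgewise polynomial of degree $p$, one checks $u_I|_K \in \Vh(K)$, and global continuity gives $u_I \in \Vh$.

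The harmonicity argument is identical to \eqref{stimadainternoabordo}--\eqref{splittingchileans}: because $u_I - \upi$ is harmonic on $K$ with boundary trace $\phihp - \upi$, minimality of the Dirichlet energy yields $|u_I - \upi|_{1,K} \le |\phihp - \upi|_{1,K}$, whence
\begin{equation} \label{splittingchileansbis}
|u - u_I|_{1,K} \le 2|u - \upi|_{1,K} + |u - \phihp|_{1,K}.
\end{equation}
Summing \eqref{splittingchileansbis} over $K\in\tauh$ reduces the estimate to bounding the two global terms. For the first, I would invoke \eqref{stimaglobaledopolemma52}, which already supplies $|u-\upi|_{h,1,\Omega} \le \const A^k h^k p^{-k}|u|_{k+1,\Omegaext}$ once $p\ge k$ (so $\mu = k$). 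For the second term, I would establish a $k$-explicit analogue of \eqref{stimaBabuSurimeshtriangolare}: on each triangle $\widetilde K \in \tauhtilde$ one applies a Legendre-type interpolation estimate on the parallelogram $\widetilde Q(\widetilde K)\supseteq\widetilde K$, exactly as Lemma \ref{lemmacounterpartlemma41} does on the square, producing a factor $\const\, \widetilde B^k h^k p^{-k}|u|_{k+1,\widetilde Q(\widetilde K)}$ per triangle. Here the continuity of $\phihp$ across the conforming triangular mesh is ensured by interpolating the vertex and edge values, as in the cited results from \cite{SchwabpandhpFEM}; this is where the shape regularity (\textbf{D1})--(\textbf{D2}), hence uniform shape regularity of the triangles $\widetilde K$, is genuinely used in the affine scaling to the master triangle.

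The main obstacle is the construction of the $k$-explicit triangular interpolant $\phihp$ on the parallelograms: I must show it is both globally continuous (so that $u_I\in H^1_0(\Omega)$) and satisfies the exponential-in-$k$ bound, which requires a vertex/edge-matching Legendre approximation on $\widetilde Q(\widetilde K)$ restricted to $\widetilde K$ and then an affine pushforward that preserves the constant $\widetilde B$ thanks to shape regularity. Once this is in place, I would sum the squared local triangular bounds and apply the overlap bound $\sum_{\widetilde K\in\tauhtilde}|u|^2_{k+1,\widetilde Q(\widetilde K)} \le N\,|u|^2_{k+1,\Omegaext}$ from (\textbf{D3}), noting that $\sqrt{N}$ is absorbed into $\const$ while $B$ stays independent of $N$. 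Combining with \eqref{stimaglobaledopolemma52} and setting $B := \max(A,\widetilde B)$ gives \eqref{stimalemma53}. I expect the bookkeeping of the exponential constant $B$ to be delicate but routine, with the genuine content residing in the Legendre interpolation estimate on the parallelogram and the verification that it descends to the triangle with a $k$-independent loss.
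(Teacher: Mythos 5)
Your proposal is correct and follows essentially the same route as the paper: the local Poisson construction and harmonicity/energy-minimality argument of Lemma \ref{lemmastimahpsudominio}, with $\upi$ replaced by the overlapping-square approximant of Lemma \ref{lemmacounterpartlemma42} (giving \eqref{stimaglobaledopolemma52}) and $\phihp$ replaced by a $k$-explicit Legendre-type approximant built trianglewise on the parallelograms $\widetilde Q(\widetilde K)$, made conforming and then summed via (\textbf{D3}). The only cosmetic difference is that the paper enforces continuity of $\phihp$ by correcting edge jumps with polynomial extensions \`a la \Babuska--Suri rather than by direct vertex/edge interpolation, but both devices are standard and neither introduces $k$-dependent constants.
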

\begin{proof}
The proof of this lemma is a combination of the arguments used in Lemma \ref{lemmacounterpartlemma41} and the construction of Lemma \ref{lemmastimahpsudominio}.
Therefore, we only give the sketch of the proof.
We start by considering a triangle $\widetilde K$ in the subtriangular decomposition $\tauhtilde$, we map it into the master triangle $\widehat T$ (i.e. the triangle obtained halving the square $[-1,1]^2$ through its diagonal),
we use a Legendre-type approximant in order to derive a estimate in $p$ as in Lemma \ref{lemmacounterpartlemma41}, we go back to the triangle $\widetilde K$.
Let $\widetilde Q$ be the parallelogram $\widetilde Q = \widetilde Q(\widetilde K)$ (see assumption (\textbf{D3})) and let $\{\widetilde V _ i\}_{i=1}^3$ be the set of the vertices of $\widetilde K$.
Therefore, it is possible to show the existence of a $\phihp \in \mathbb P_p (\widetilde K)$ such that $\phihp (\widetilde V_i)=u(\widetilde V_i)$, $\forall i=1,2,3$ and such that
\begin{equation}\label{stimeinstileSchwabsutriangolodiriferimento}
|u - \phihp|_{1,\widetilde K} \le \const \widetilde B^k \frac{h^{k}}{p^k} |u|_{k+1,\widetilde Q} ,
\end{equation}
where $\const$ and $\widetilde B$ are two constants independent on $p$, $h$, $k$ and $u$ ($\widetilde B$ is also independent on $N$, $\gamma$ and $\widetilde \gamma$).
We point out that this estimate holds for all the triangles in the triangular subdecomposition $\tauhtilde$.
We denote, with a little abuse of notation, by $\phihp : \Omega \rightarrow \mathbb R$ the global piecewise polynomial function whose restriction on each triangle $\widetilde K$ satisfies \eqref{stimeinstileSchwabsutriangolodiriferimento}.

So far, we have obtained discontinuous piecewise polynomials.
We set:
\[
E=E(\widetilde K) := \left( \bigcup _{\left\{ \widetilde{\widetilde K} \in \tauhtilde \mid  \widetilde{K} \cap \widetilde{\widetilde K} =e    \right\}} \widetilde Q \left( \widetilde {\widetilde K} \right) \right) \cup \widetilde Q(\widetilde K),
\quad e\in \mathcal E _{\widetilde K},
\]
where we recall that $\mathcal E _{\widetilde K}$ is the set of the edges of $\widetilde K$ and $\widetilde Q (\widetilde {\widetilde K})$ is defined in assumption (\textbf{D3}).
We need to modify $\phihp$ in order to get a continuous piecewise polynomial over $\tauhtilde$ without changing the approximation property \eqref{stimeinstileSchwabsutriangolodiriferimento}.
This can be done following the same approach as in \cite[Theorem 4.6, Lemma 4.7]{babuskasurihpversionFEMwithquasiuniformmesh},
i.e. by correcting $\phihp$ with suitable polynomial extensions of its edge jumps. It is easy to check that such step does not introduce constants depending on $k$.

With another little abuse of notation, we have obtained a $\phihp \in H_0^1(\Omega)$ piecewise continuous polynomial of degree $p$ over the subtriangular decomposition $\tauhtilde$,
such that an analogous of \eqref{stimeinstileSchwabsutriangolodiriferimento} holds for all $\widetilde K \in \tauhtilde$:
\[
|u - \phihp|_{1,\widetilde K} \le c(\gamma, \widetilde \gamma) \widetilde{\widetilde B}^k \frac{h^{k}}{p^k} |u|_{k+1,E}.
\]
Using assumption (\textbf{D3}) and the arguments described in Lemma \ref{lemmastimahpsudominio}, it is easy to conclude the proof.
\end{proof}
The counterpart of Lemma \ref{lemmastima-loading} follows easily from Lemma \ref{lemmastima-loading} and Lemma \ref{lemmacounterpartlemma42}. In particular the following holds.
\begin{lem} \label{lemmacounterpartlemmaloading}
Under assumptions (\textbf{D1}), (\textbf{D2}) and (\textbf{D3}), let $\Omegaext$ be defined in \eqref{equationOmegaext}, let the loading term $f \in H^{\widetilde{k}+1}(\Omegaext)$. Then it holds
\begin{equation} \label{stima-loading}
\bigfh \le \const D^k \frac{h^{\widetilde \mu}}{p ^{\widetilde k + 2}} ||f||_{\widetilde k+1, \Omegaext},
\quad \widetilde \mu = \min (p , \widetilde k +2),
\end{equation}
where $\const$ and $D$ are two constants independent on $k$, $h$, $p$, $\gamma$, $\widetilde \gamma$ and $u$ ($D$ is also independent on $N$).
\end{lem}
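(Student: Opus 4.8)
The plan is to retrace the proof of Lemma~\ref{lemmastima-loading} essentially verbatim, replacing the polynomial approximation used there (whose constant is only implicit in $k$) by the explicit-in-$k$ one furnished by Lemma~\ref{lemmacounterpartlemma42}, and then to globalise the resulting local bounds by means of the overlapping-squares assumption (\textbf{D3}). As in Lemma~\ref{lemmastima-loading}, the case $p=1$ is already treated in \cite{VEMvolley}, so I would assume $p\ge 2$. Starting from the definition \eqref{discreteloadingterm} of the discrete loading and inserting the local $L^2$-projector $P^{0,K}_{p-2}$, one has $(f,v_h)_{0,\Omega}-\langle \fh,v_h\rangle\underh=\sum_{K\in\tauh}(f-P^{0,K}_{p-2}f,\,v_h-P^{0,K}_{p-2}v_h)_{0,K}$, to which I would apply Cauchy--Schwarz on each element.

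The only genuinely new ingredient is the estimate of the factor $||f-P^{0,K}_{p-2}f||_{0,K}$. Since $P^{0,K}_{p-2}$ is the $L^2(K)$-best approximation, I would bound it by $||f-f^{\pi}_{p-2}||_{0,K}$, where $f^{\pi}_{p-2}$ is now the polynomial provided by the $L^2$-version of Lemma~\ref{lemmacounterpartlemma42} on the smallest square $Q=Q(K)$; this is legitimate precisely because $f\in H^{\widetilde k+1}(\Omegaext)$ is defined on all of $Q(K)\subseteq\Omegaext$. This produces a bound of the form $\const\, D^{\widetilde k}\, h_K^{\min(p-1,\widetilde k+1)}\,\widetilde p^{\,-(\widetilde k+1)}\,||f||_{\widetilde k+1,Q(K)}$, with $\widetilde p=\max(1,p-2)$ and $D$ the base of the exponential inherited from the $2^ke^k$ factor of Lemma~\ref{lemmacounterpartlemma41}. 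For the companion factor I would keep the original, $k$-independent bound $||v_h-P^{0,K}_{p-2}v_h||_{0,K}\le\const\,h_K\,\widetilde p^{\,-1}|v_h|_{1,K}$ coming from \eqref{stimeqkonK} with $\el=0$ applied on $K$ itself (here one may \emph{not} pass to the square, since $v_h$ need not be defined on $Q(K)$; this costs nothing, as $v_h$ carries only $H^1$-regularity and hence no exponential factor in $k$). Multiplying the two and using $\min(p-1,\widetilde k+1)+1=\widetilde\mu=\min(p,\widetilde k+2)$ yields the elementwise estimate $\const\,D^{\widetilde k}\,h_K^{\widetilde\mu}\,\widetilde p^{\,-(\widetilde k+2)}\,||f||_{\widetilde k+1,Q(K)}\,|v_h|_{1,K}$.

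To conclude I would sum over $K\in\tauh$, use $h_K\le h$, and apply Cauchy--Schwarz to the sum. The factor $\big(\sum_K|v_h|_{1,K}^2\big)^{1/2}$ is exactly $|v_h|_{1,\Omega}$, while assumption (\textbf{D3}) gives $\sum_{K\in\tauh}||f||_{\widetilde k+1,Q(K)}^2\le N\,||f||_{\widetilde k+1,\Omegaext}^2$; the resulting $\sqrt N$ is absorbed into $\const$, so that the exponential base $D$ stays independent of $N$. Replacing finally $\widetilde p$ by $p$ (which costs at most a bounded-base factor raised to the power $\widetilde k$, again absorbed into $D$) and invoking the definition \eqref{definitionbigfhp} of $\bigfh$ delivers the asserted bound.

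The step I expect to be the main obstacle is not the globalisation --- which is routine once (\textbf{D3}) is available --- but the derivation of the $L^2$ (that is, $\el=0$) counterpart of Lemma~\ref{lemmacounterpartlemma41}, since that lemma is stated only for the $H^1$-seminorm whereas the loading term requires control of $||f-f^{\pi}_{p-2}||_{0,K}$. I would obtain it by repeating the argument of Lemma~\ref{lemmacounterpartlemma41} word for word, but starting from the $L^2$-bound rather than the $H^1$-bound in Lemma~4.67 of \cite{SchwabpandhpFEM}; this gains one extra power of $p$ in the denominator and produces an exponential factor of the same $2^ke^k$ type, which is exactly what guarantees that $D$ remains a fixed number independent of $k$, $h$, $p$ and $N$.
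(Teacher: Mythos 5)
Your proposal is correct and follows essentially the same route as the paper, whose entire proof of this lemma is the remark that it ``follows easily from Lemma \ref{lemmastima-loading} and Lemma \ref{lemmacounterpartlemma42}'' --- that is, rerunning the loading-term argument with the explicit-in-$k$, square-based approximant for $f$ and globalising via (\textbf{D3}), exactly as you do. Your explicit attention to the need for an $\el=0$ (i.e.\ $L^2$) counterpart of Lemma \ref{lemmacounterpartlemma41}, obtained from the $L^2$ bound in Lemma 4.67 of \cite{SchwabpandhpFEM}, fills in a detail the paper leaves entirely implicit, and your treatment of it is sound.
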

% ---------------------------------------------------------------------------------------------------------------------
\subsubsection{A global estimate result} \label{subsubsectionaglobalestimateresult}
% ---------------------------------------------------------------------------------------------------------------------
Combining bounds \eqref{stimaglobaledopolemma52}, \eqref{stimalemma53}, \eqref{stima-loading} and \eqref{stimevecchieVEMvolley} yields the following result.
\begin{thm} \label{theoremaglobalestimateresult}
Let $k\in \mathbb N$, $k>\frac{1}{2}$. Let the mesh assumptions (\textbf{D1}), (\textbf{D2}) and (\textbf{D3}) hold.
 Let $u$ and $u_h$ be respectively the solution of problems (\ref{continuousproblemweakformulation}) and (\ref{discreteproblem}), with $u\in H^k (\Omega)$.
Let $\Omegaext$ be defined as in \eqref{equationOmegaext}. Let $u\in H^{k+1}(\Omegaext)$.
Let $\gamma$, $\widetilde \gamma$ and $N$ be the constants introduced in assumptions (\textbf{D1}), (\textbf{D2}) and (\textbf{D3}).
Assume also $p\ge 2k$. Then, the following $hp$ estimate holds:
\begin{equation}  \label{stimeglobaliespliciteinkH1}
|u-u_h|_{1,\Omega} \le \const \widetilde A^k \frac{h^{k}}{p^{k}} |u|_{k+1,\Omegaext},
\end{equation}
where $\const$ and $\widetilde A$ are two constants independent on $h$, $p$, $k$ and $u$ ($\widetilde A$ is also independent on $N$).
\end{thm}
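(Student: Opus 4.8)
The plan is to assemble the final energy-norm estimate \eqref{stimeglobaliespliciteinkH1} by combining the three explicit-in-$k$ bounds already established for the individual terms appearing in the best-approximation inequality \eqref{stimevecchieVEMvolley}. Recall that Theorem~3.1 of \cite{VEMvolley}, reproduced in \eqref{stimevecchieVEMvolley}, furnishes
\[
|u-\uh|_{1,\Omega} \le \const \left(  \inf_{\upi \in S_h ^{p,-1}(\tauh)} |u-\upi|_{h,1,\Omega} + \inf _{u_I \in \Vh} |u-u_I|_{1,\Omega} +  \bigfh     \right),
\]
with $\const$ depending only on the stability constants $\alpha_*,\alpha^*$ of assumption~(\textbf{A2}), hence independent of $k$, $h$, $p$ and $u$. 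The strategy is therefore to bound each of the three infima (or upper bounds) using the results of Section~\ref{subsectionhpestimateusinganoverlappingsquaremethod}, where the dependence on the Sobolev regularity $k$ has been made explicit, and then to collect the resulting exponential-in-$k$ constants into a single factor $\widetilde A^k$.

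First I would handle the polynomial approximation term: choosing $\upi|_K=(\Pi^h_{Q,p}u)|_K$ as specified just before \eqref{stimaglobaledopolemma52}, the global estimate \eqref{stimaglobaledopolemma52} gives, under~(\textbf{D3}) and for $\mu=\min(p,k)$,
\[
|u-\upi|_{h,1,\Omega} \le \const A^k \frac{h^{\mu}}{p^{k}} ||u||_{k+1,\Omegaext}.
\]
Since the theorem assumes $p\ge 2k$, in particular $p\ge k$, we have $\mu=\min(p,k)=k$, so the rate becomes $h^k/p^k$; a Bramble--Hilbert/Poincar\'e argument as in Remark~\ref{remarkdanormaaseminorma} then allows the full norm $||u||_{k+1,\Omegaext}$ on the right to be replaced by the seminorm $|u|_{k+1,\Omegaext}$, matching the form of \eqref{stimeglobaliespliciteinkH1}. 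For the virtual interpolation term, Lemma~\ref{lemmacounterpartlemma43} provides directly a $u_I\in\Vh$ with
\[
|u-u_I|_{1,\Omega} \le \const B^k \frac{h^{k}}{p^{k}} |u|_{k+1,\Omegaext},
\]
already in seminorm form and already explicit in $k$. For the loading term, Corollary~\ref{corol-loading} relates $\bigfh$ to the regularity of $u$ via $f\in H^{k-1}(\Omega)$; combined with the explicit-in-$k$ Lemma~\ref{lemmacounterpartlemmaloading} (applied with $\widetilde k+1=k-1$, so that $\widetilde k+2=k$ and $\widetilde\mu=\min(p,k)=k$ under $p\ge 2k$), this yields a bound of the form $\const D^k (h^k/p^k)|u|_{k+1,\Omegaext}$, where I would absorb the constant factors from passing between $||f||$ and $|u|$ into the base of the exponential, exactly as in the seminorm remark following Corollary~\ref{corol-loading}.

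Finally I would substitute the three bounds into the best-approximation inequality and define $\widetilde A:=\max\{A,B,D\}$, so that $A^k+B^k+D^k\le 3\widetilde A^k$, giving the claimed estimate with a new constant $\const$ absorbing the factor $3$ together with the $\alpha_*,\alpha^*$-dependent constant from \eqref{stimevecchieVEMvolley}. The only genuinely delicate point, which is really a bookkeeping rather than an analytic obstacle, is to verify that the various constants $A$, $B$, $D$ are indeed all independent of $N$ (the overlap bound from~(\textbf{D3})) so that $\widetilde A$ inherits this $N$-independence: the $N$-dependence enters only through the additive passage from local extended patches $Q(K)$, $\widetilde Q(\widetilde K)$ to the global domain $\Omegaext$, which contributes a multiplicative factor $\sqrt N$ sitting in front of the norm, and must be tracked into the constant $\const$ rather than into the base $\widetilde A$. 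Checking this separation carefully is what makes the estimate usable in Section~\ref{subsectionexponentialconvergence}, where the exponential-in-$k$ behaviour of $\widetilde A^k$ will be balanced against the analytic-regularity growth of $|u|_{k+1,\Omegaext}$ to produce exponential convergence in $p$.
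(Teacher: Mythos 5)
Your proposal follows essentially the same route as the paper, whose proof consists precisely of combining \eqref{stimevecchieVEMvolley} with the three explicit-in-$k$ bounds \eqref{stimaglobaledopolemma52}, \eqref{stimalemma53} and \eqref{stima-loading} and absorbing the exponential factors into a single base $\widetilde A$. Your additional remarks on converting $\|u\|_{k+1,\Omegaext}$ to the seminorm and on keeping the $N$-dependence out of $\widetilde A$ are sensible bookkeeping that the paper leaves implicit, not a departure from its argument.
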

As done in Remark \ref{remarkstimehpL2}, we point out that if the domain $\Omega$ is convex
it is possible to derive easily, owing to the approximation properties of Legendre polynomials, $L^2$ estimates of the form:
\begin{equation} \label{stimeglobaliespliciteinkL2}
|u-u_h|_{0,\Omega} \le \const \widetilde A^k \frac{h^{k+1}}{p^{k+1}} |u|_{k+1,\Omegaext},
\end{equation}
where $\const$ and $\widetilde A$ are two constants independent on $h$, $p$, $k$ and $u$ ($\widetilde A$ is also independent on $N$).
\begin{remark} \label{remarkBabuvsSchwab}
We point out that in order to obtain the $hp$ estimates of Theorem
\ref{theoremstimehpfinali} and of Theorem
\ref{theoremaglobalestimateresult}, we used two different
approximant polynomials. Throughout Section
\ref{sectionapproximationresult}, we decided to follow the
\Babuska-Suri construction (see
\cite{babuskasurihpversionFEMwithquasiuniformmesh} and
\cite{BabuSurioptimalconvergenceestimatepmethods}) which is based
on a Fourier series expansion on a proper domain; this choice
is essentially a matter of taste but has the merit of avoiding to
use bi-polynomial functions.
%%%%%
%%%%%%
Nevertheless this construction obliges, also in the case of the
overlapping square technique introduced at the beginning of
Section \ref{sectionexponentialconvergenceforanalyticfunction}, to
use some extension operator  (for instance the one described in
\cite{steinsingularintegrals} for Lipschitz domains). Thus, to
give an explicit representation of the dependence of the involved
constant on the Sobolev regularity $k$ is a not trivial work.
On the other hand, throughout Section \ref{sectionexponentialconvergenceforanalyticfunction}, we made use of Legendre-type approximant (as done for instance in \cite{SchwabpandhpFEM}).
In this case, owing to Legendre polynomials properties, we are able to obtain exponential estimates (see Section \ref{subsectionexponentialconvergence}),
since the dependence in the constant with respect to the Sobolev regularity $k$ can be derived.
\end{remark}
% ---------------------------------------------------------------------------------------------------------------------
\subsection {Exponential convergence} \label{subsectionexponentialconvergence}
% ---------------------------------------------------------------------------------------------------------------------
We have the following exponential convergence result for analytic solutions $u$ over the extended domain $\Omegaext$ (see \eqref{equationOmegaext}).
\begin{thm} \label{theoremeponentialconvergence}
Let the mesh assumptions (\textbf{D1}), (\textbf{D2}) and (\textbf{D3}) hold. Let $u$ and $u_h$ be respectively the solution of problems (\ref{continuousproblemweakformulation}) and (\ref{discreteproblem}),
with $u\in \mathcal A (\overline \Omegaext)$, $\mathcal A (\overline \Omegaext)$ being the set of analytic function over the closure of $\Omegaext$ defined in \eqref{equationOmegaext}.
Then, the following exponential convergence estimate holds:
\begin{eqnarray} \label{stime-exponential}
||u-u_h||_{1,\Omega} \le \const e^{-bp},
\end{eqnarray}
for some positive constants $\const$ and $b$ independent on $p$.
\end{thm}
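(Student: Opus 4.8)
The plan is to derive the exponential estimate \eqref{stime-exponential} from the finite-regularity estimate \eqref{stimeglobaliespliciteinkH1} of Theorem \ref{theoremaglobalestimateresult} by exploiting the fact that, for an analytic function, the Sobolev seminorms $|u|_{k+1,\Omegaext}$ grow at a controlled factorial rate. First I would recall the standard characterisation of analyticity: since $u \in \mathcal A(\overline{\Omegaext})$ and $\overline{\Omegaext}$ is compact, there exist constants $C_u > 0$ and $d > 0$, depending on $u$ but independent of $k$, such that
\begin{equation} \label{analyticboundonseminorms}
|u|_{k+1,\Omegaext} \le C_u\, d^{\,k+1}\, (k+1)!, \quad \forall k \in \mathbb N.
\end{equation}
This is precisely the derivative bound defining a real-analytic function on a compact set, and it converts the $k$-dependence of the right-hand side of \eqref{stimeglobaliespliciteinkH1} into an explicit factorial growth that we can later defeat by Stirling's formula.

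Next I would insert \eqref{analyticboundonseminorms} into the bound of Theorem \ref{theoremaglobalestimateresult}. That theorem requires $p \ge 2k$, so for a fixed polynomial degree $p$ we are free to choose the regularity index $k$; the natural choice is to take $k$ proportional to $p$, say $k = \lfloor p/2 \rfloor$, which is the largest value admissible under the constraint $p \ge 2k$. With this choice and using \eqref{analyticboundonseminorms}, the estimate becomes
\[
|u - u_h|_{1,\Omega} \le \const\, \widetilde A^{\,k}\, \frac{h^{k}}{p^{k}}\, C_u\, d^{\,k+1}\, (k+1)!.
\]
The crucial observation is that $(k+1)! \le (k+1)^{k+1} e^{-(k+1)} \sqrt{2\pi(k+1)}\, e^{1/12}$ by Stirling, so that combining the factor $p^{-k}$ with $(k+1)!$ and recalling $k \sim p/2$ produces, after collecting the purely exponential-in-$k$ factors, a bound of the form $\const' (G)^{k}$ for some constant $G$ that can be made strictly less than $1$ provided $h$ is small enough relative to $d$ and $\widetilde A$.

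The heart of the argument is therefore to verify that, upon substituting $k = \lfloor p/2 \rfloor$, all terms of the form $(\text{constant})^k$ and the factorial/power combination $(k+1)!\, p^{-k}$ collapse into a geometric factor $\rho^{\,p}$ with $0 < \rho < 1$. Since $k \sim p/2$, every factor $\widetilde A^k$, $d^k$, $h^k$ and the Stirling-transformed $((k+1)/p)^{k}$ is bounded by $(\text{const})^{p}$, and the dominant ratio $((k+1)/p)^k \approx 2^{-p/2} e^{p/2}$ remains bounded by a fixed power less than $e^{p/2}$; choosing $h$ below a threshold $\overline h$ guarantees the product is $\le e^{-bp}$ for some $b > 0$. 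The main obstacle, and the step requiring the most care, is exactly this asymptotic bookkeeping: one must check that the smallness of $h$ (or equivalently the boundedness of $\Omegaext$ uniformly in $h$, guaranteed by the discussion preceding \eqref{equationOmegaext}) is enough to absorb the constants $\widetilde A$ and $d$ so that the geometric base is genuinely below one. I would close by noting that the seminorm $|u|_{k+1,\Omegaext}$ controls the full energy error through \eqref{stimeglobaliespliciteinkH1} together with the Poincar\'e inequality, so that the $H^1$-norm bound \eqref{stime-exponential} follows with $\const$ and $b$ depending on $u$, $\gamma$, $\widetilde\gamma$ and $N$ but independent of $p$.
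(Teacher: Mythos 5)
Your overall strategy is the same as the paper's: combine the $k$-explicit estimate \eqref{stimeglobaliespliciteinkH1} with the factorial bound on the Sobolev seminorms of an analytic function, take the regularity index proportional to $p$, and kill the factorial with Stirling's formula. However, there is a genuine gap in the one step that actually decides whether the argument closes: the choice of the proportionality constant. Taking $k=\lfloor p/2\rfloor$, i.e.\ the \emph{largest} admissible index under $p\ge 2k$, the Stirling bookkeeping gives a geometric base of order $h\,d\,\widetilde A/(2e)$ raised to the power $p/2$; this is below one only if $h$ is below a threshold depending on $d$ and $\widetilde A$. You acknowledge this and patch it by assuming $h\le \overline h$, but the theorem makes no such smallness assumption: it asserts exponential convergence for \emph{any} fixed mesh satisfying (\textbf{D1})--(\textbf{D3}), with $\const$ and $b$ merely independent of $p$. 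For a mesh with $h\,d\,\widetilde A/(2e)>1$ your bound is $\const\rho^{p}$ with $\rho>1$ and proves nothing.

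The paper avoids this by \emph{not} taking the maximal index. Setting $\delta=hA\widetilde A/e$, it chooses $s=p/(2(\delta+1))$, so that the geometric base becomes $\tfrac{s}{p}\delta=\tfrac{\delta}{2(\delta+1)}<\tfrac12$ automatically, for every value of $h$; the price is a smaller (but still $p$-proportional) exponent $s$, which only shrinks the rate $b$ as $h$ grows, never destroys it. This is the standard device in $hp$ exponential-convergence proofs. Note also that this choice of $s$ is generally not an integer, which is why the paper first extends \eqref{stimeglobaliespliciteinkH1} to real $s\in[1,p/2]$ by space interpolation (equation \eqref{stima-s-real}); if you prefer to stay with integers you can take $k=\lfloor \sigma p\rfloor$ for a sufficiently small $\sigma$ depending on $\delta$, at the cost of a harmless bounded factor. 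With that modification your argument becomes a correct proof of the theorem as stated.
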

\begin{proof}
We recall (see for instance \cite{MonvelKreepseudodifferential}) that an analytic function in the closure of a domain $\Theta \in \mathbb R ^2$ is characterized by the following bound:
\begin{equation} \label{caratterizzazionefunzionianalitiche}
|| D^{\boldalpha} u ||_{\infty, \overline \Theta} \le \const A^{|\boldalpha|} \boldalpha !,\quad \boldalpha = (\alpha_1, \alpha_2) \in {\mathbb N_0}^2,
\end{equation}
where $\boldalpha ! = \alpha_1 ! \alpha_2!$ and where $\const$ and $A$ are constants independent on the multi-index $\boldalpha$; nevertheless, $\const$ and $A$ depends on $u$ and on $\overline \Theta$.
Recalling \eqref{stimeglobaliespliciteinkH1}, we have
\[
|u-\uh|_{1,\Omega} \le \const(\gamma, \widetilde \gamma, N) \widetilde A(\gamma, \widetilde \gamma) ^k\frac{h^{k }}{p^{k}} |u|_{k+1,\Omegaext},
\]
if $p\ge 2k$.
Using standard results from space interpolation theory \cite{Bergh-Lofstrom, Triebel}, from the above bound one can easily derive
\begin{equation}\label{stima-s-real}
|u-\uh|_{1,\Omega} \le \const(\gamma, \widetilde \gamma, N) \widetilde A(\gamma, \widetilde \gamma)^{s}\frac{h^{s}}{p^{s}} | u |_{s+1,\Omegaext}
\end{equation}
for all $s \in {\mathbb R}$ with $2\le 2s \le p$.
The combination of \eqref{stima-s-real} and \eqref{caratterizzazionefunzionianalitiche} yields
\[
|u-\uh|_{1,\Omega} \le \const \left( \widetilde A \frac{h}{p} \right)^s A^{s+1} (s+1)! .
\]
By means of Stirling formula, we obtain:
\[
|u-\uh|_{1,\Omega} \le \const \left( \frac{h A \widetilde A}{p} \right)^s  \left( \frac{s+1}{e}  \right)^{s+1} \sqrt{2\pi} (s+1)^{\frac{1}{2}} ,
\]
easily yielding
\[
|u-\uh|_{1,\Omega} \le \const \left( \frac{h A \widetilde A}{e p} s \right)^s  s^{\frac{3}{2}}.
\]
By denoting $\delta=\frac{h A \widetilde A}{e}$ we can write:
\[
|u-\uh|_{1,\Omega} \le \const \left( \frac{s}{p} \delta \right)^s  s^{\frac{3}{2}}.
\]
Since this last inequality holds true for all $s$ such that $2 \le 2s \le p$, we may choose $s=\frac{p}{2(\delta+1)}$. Hence:
\begin{equation} \label{abbiamoquasiconvergenzaesponenziale}
|u-\uh|_{1,\Omega} \le \const \left( \frac{\delta}{2(\delta+1)} \right)^{\frac{p}{2(\delta+1)}}  p^{\frac{3}{2}}=  \const e^ {- b p  }  p^{\frac{3}{2}} \: ,
\ \textrm{ with } b=\frac{\log(\frac{\delta}{2(\delta+1)})}{2(\delta+1)}.
\end{equation}
The multiplier $p^{\frac{3}{2}}$ can be absorbed by $e^{-bp}$ by making $b$ a little bit smaller and increasing $\const$; therefore, \eqref{abbiamoquasiconvergenzaesponenziale} immediately yields
\begin{equation} \label{exponentialconvergence}
|u-u_h|_{1,\Omega} \le \const e^{-bp},
\end{equation}
for some constants $\const$ and $b$ independent on $p$. The result
follows by the Poincar\'e inequality. 
\end{proof}
% -----------------------------------------------------------------
\section{Numerical results} \label{sectionnumericalresults}
% -----------------------------------------------------------------

In this section, we present numerical results experimentally validating the error estimates \eqref{stimehpfinalia}, \eqref{stimehpfinalib}, \eqref{stimeL2hpfinalia} and \eqref{exponentialconvergence}.
We consider four types of meshes (see Figure \ref{figurefourmeshes})
on the domain $\Omega= [0,1]^2$, namely an unstructured triangular mesh, a regular square mesh, a regular hexagonal mesh and a Voronoi-Lloyd mesh (see \cite{dufabergunzburgerVoronoi}).
\begin{figure}  [h]
\centering
\subfigure {\includegraphics [angle=0, width=0.24\textwidth]{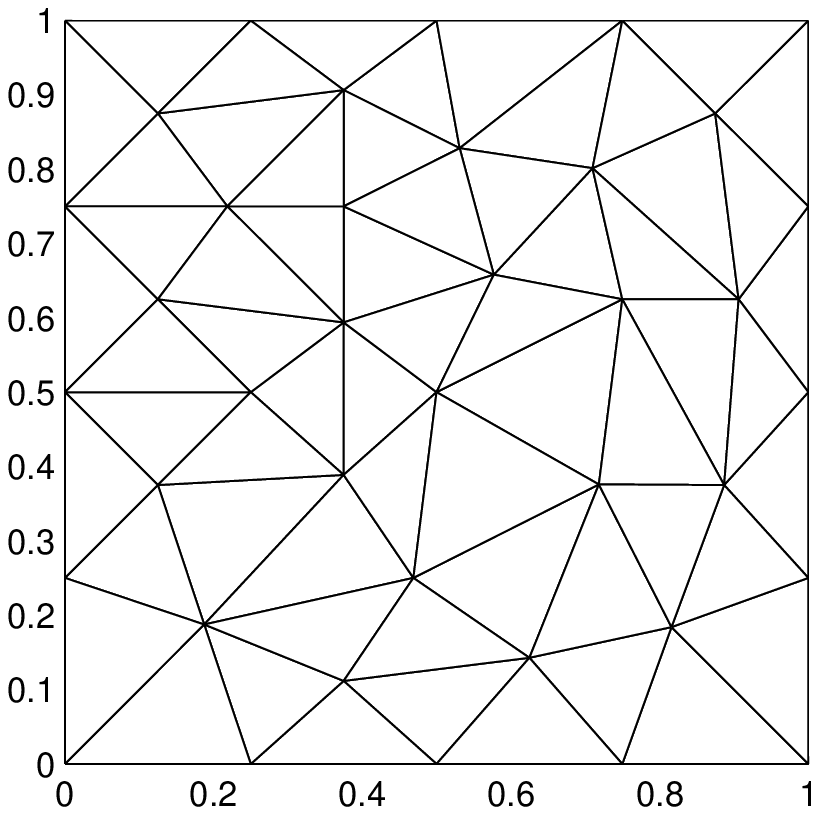}}
\subfigure {\includegraphics [angle=0, width=0.24\textwidth]{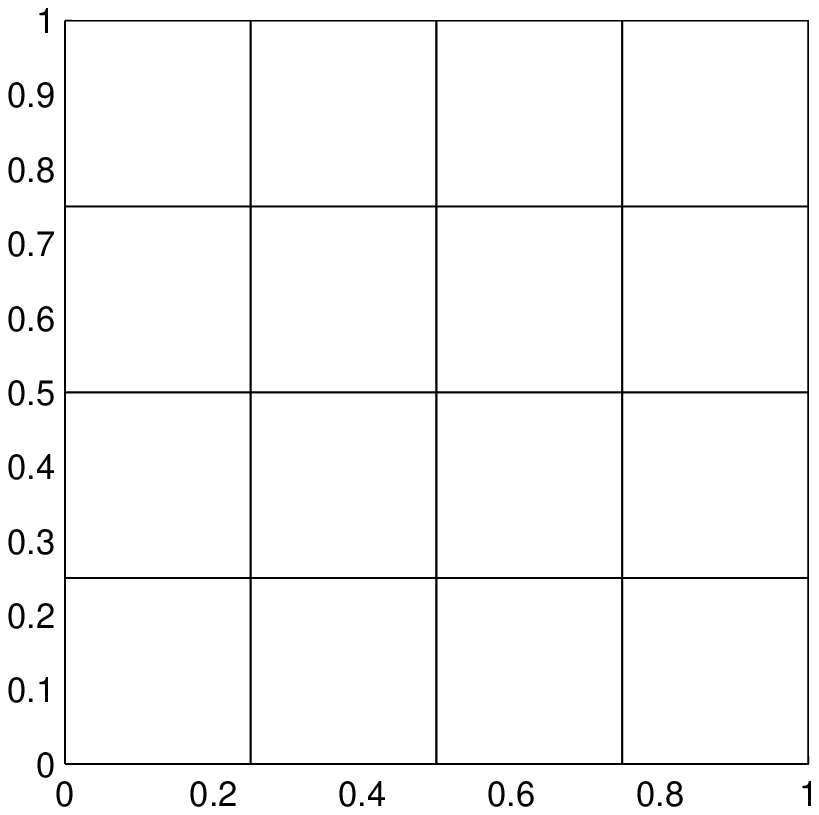}}
\subfigure {\includegraphics [angle=0, width=0.24\textwidth]{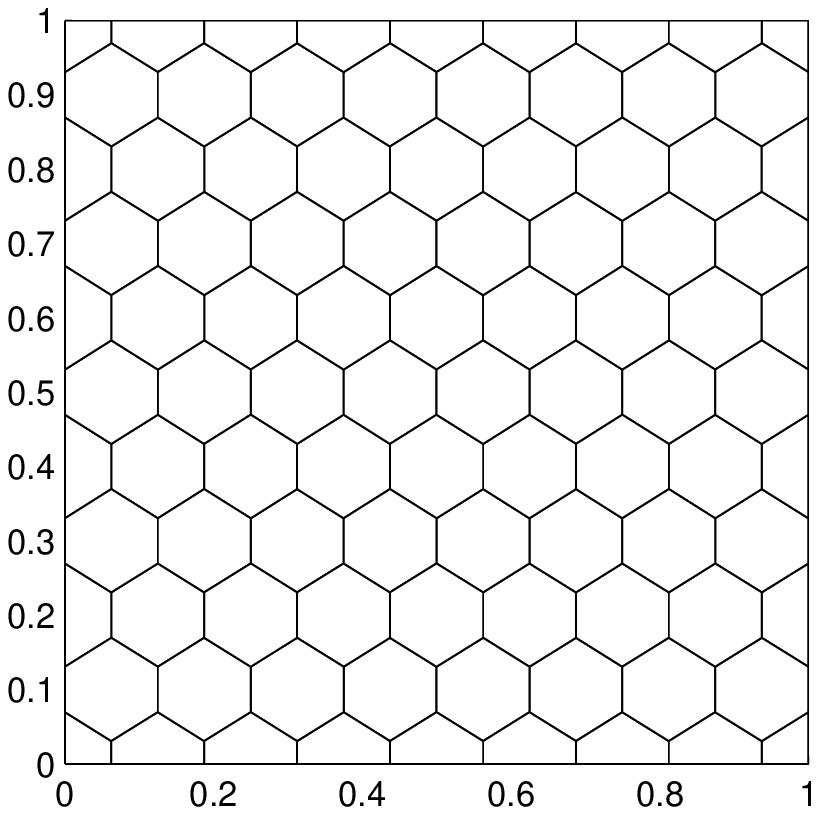}}
\subfigure {\includegraphics [angle=0, width=0.24\textwidth]{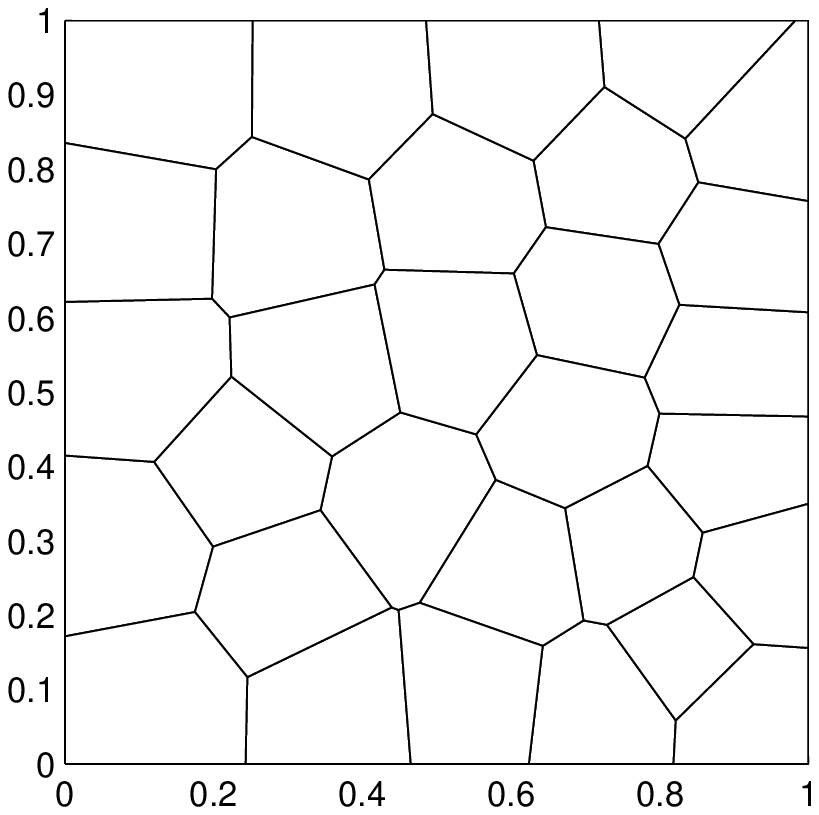}}
\caption{
From left to right: unstructured triangular mesh, regular square mesh, regular hexagonal mesh, Voronoi-Lloyd mesh} \label{figurefourmeshes}
\end{figure}
The basis $\{q_{\boldalpha}\}$ of the space $\mathbb P_{p-2}(K)$ introduced in Section \ref{sectionVEMforthePoissonproblem}, $\forall K\in \tauh$, is taken to be the same as that introduced for instance in \cite{VEMvolley} or \cite{hitchhikersguideVEM}.
Different choices are investigated in the Appendix.
Moreover, we fix a possible choice for the stabilizing term $S^K$ introduced in \eqref{stabilizingterm} (see for instance \cite{VEMvolley}) as:
\begin{equation} \label{choicestabilizingform}
S^K(u_h,v_h) = \sum _{r=1}^{\text{dim}(\VhK)} \chi_r (u_h) \chi_r (v_h) ,\; \forall u_h,\, v_h \in \VhK,\, K\in \tauh,
\end{equation}
where $\chi _r$, $\forall r=1,\dots,\text{dim}(\VhK)$ is the operator which associates to each function in the local space  $\VhK$ its $r$th local degree of freedom.\\
\begin{remark}
The stabilizing bilinear form \eqref{choicestabilizingform} does not guarantee the stability property \eqref{stabilizingterm} uniform in $p$.
Nevertheless, the numerical results seem to be robust with respect to this choice.
A theoretical study of the stabilization will be the object of further investigation.
\end{remark}
In order to estimate the error introduced by the MATLAB algebraic sparse solver, we have solved a problem whose exact solution is the polynomial $u(x,y)=x^2+y^2$.
Since the VEM passes the patch test, in this case, for $k\geq 2$, the approximate solution $u_h$ coincides with $u$ and the error that we measure is only due to the algebraic solver (it should be zero in exact arithmetic).
Hence, together with the standard error in the $H^1$-norm and in the $L^2$-norm with respect to the solutions \eqref{caso test sen} and \eqref{caso test power}, in our convergence figures we also plot this algebraic error.
When the error curve comes close to the algebraic error curve, the convergence error and the error introduced by the MATLAB algebraic solver are of the same order and the expected theoretical behaviour does not hold anymore.
% -----------------------------------------------------------------
\subsection{Convergence in $p$ for an analytic function} \label{subsectionumericaltestsnforaCinftyfunction}
% -----------------------------------------------------------------
We consider problem \eqref{continuousproblemweakformulation} with loading term $f(x,y)=2\pi^2 \sin(\pi x)\sin(\pi y)$. The exact solution is given by:
\begin{equation} \label{caso test sen}
u(x,y)=\sin(\pi x)\sin(\pi y).
\end{equation}
In this tests, the mesh is kept fixed (see Figure \ref{figurefourmeshes}) and the polynomial degree is raised.
In Figures \ref{figureerrorsfunzioneCinftypatchtest_stab1_a} and \ref{figureerrorsfunzioneCinftypatchtest_stab1_b} we report the errors among the discrete and exact solutions.
Since we are dealing with a virtual element solution $u_h$ (that is unknown inside elements) we cannot direcly compute the error $|| u - u_h ||{s,\Omega}$, $s=0,1$. Therefore, as is standard in VEM, we plot instead $|| u-\Pinablap u_h ||_{0,\Omega}$ and $| u-\Pinablap u_h |_{h,1,\Omega}$, that are good representatives of the above errors (see \eqref{definitionPinabla} for the definition of the operator $\Pinablap$ and \eqref{H1brokenSobolevseminorm} for the definition of the $H^1$ broken Sobolev seminorm).
\begin{figure}  [h] \label{figureerrorsfunzioneCinftypatchtest_stab1_a}
\begin{overpic}[angle=0, width=0.45\textwidth]{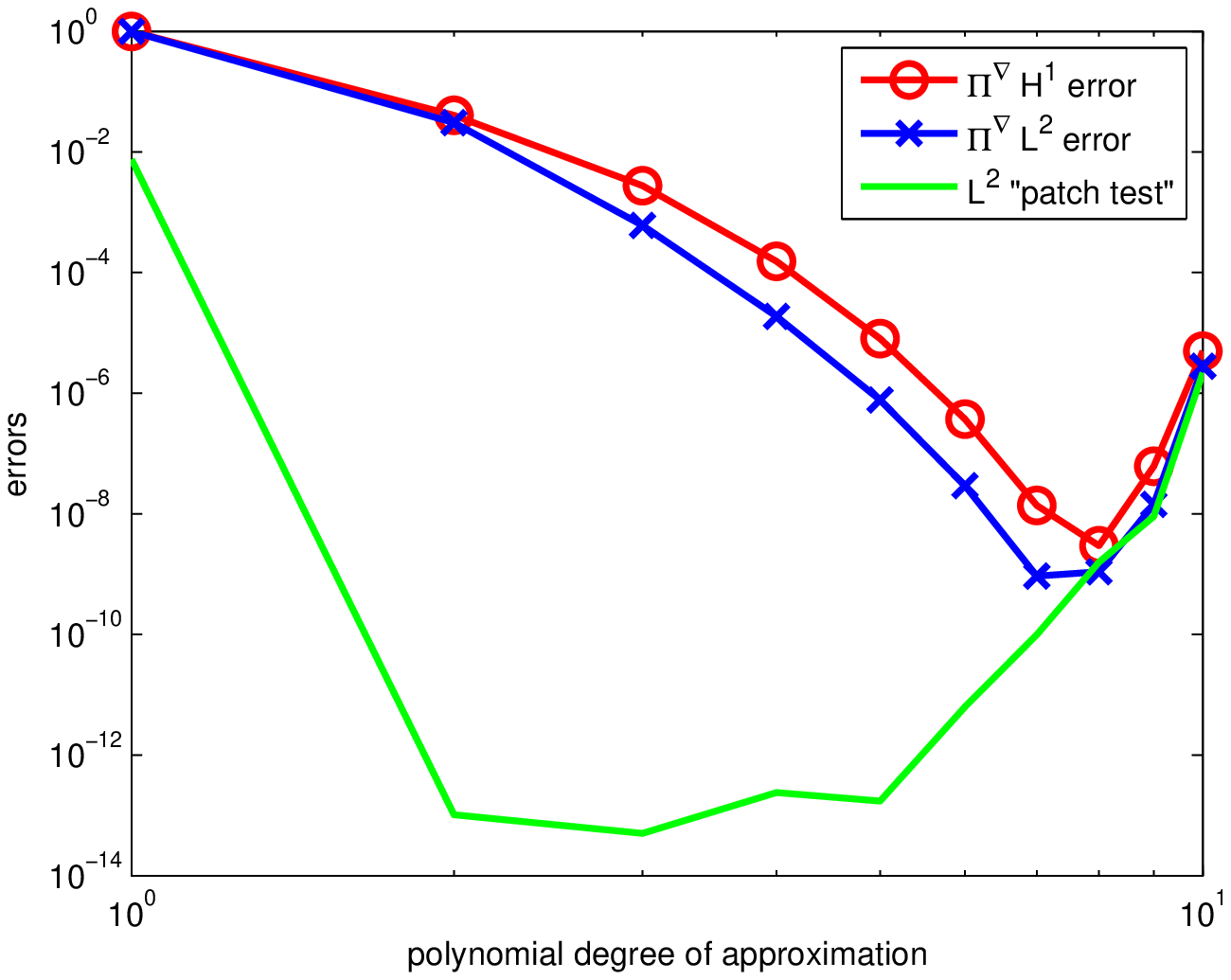}
\put(20,9.5){\colorbox{white}{\tiny 1}} \put(68,9.5){\tiny{2}} \put(94.5,9.5){\tiny{3}} \put(113,9.5){\tiny{4}} \put(127.5,9.5){\tiny{5}}
\put(139.5,9.5){\tiny{6}} \put(149,9.5){\tiny{7}} \put(158,9.5){\tiny{8}} \put(166,9.5){\tiny{9}} \put(170,9.5){\colorbox{white}{\tiny 10}}
\end{overpic}
\begin{overpic}[angle=0, width=0.45\textwidth]{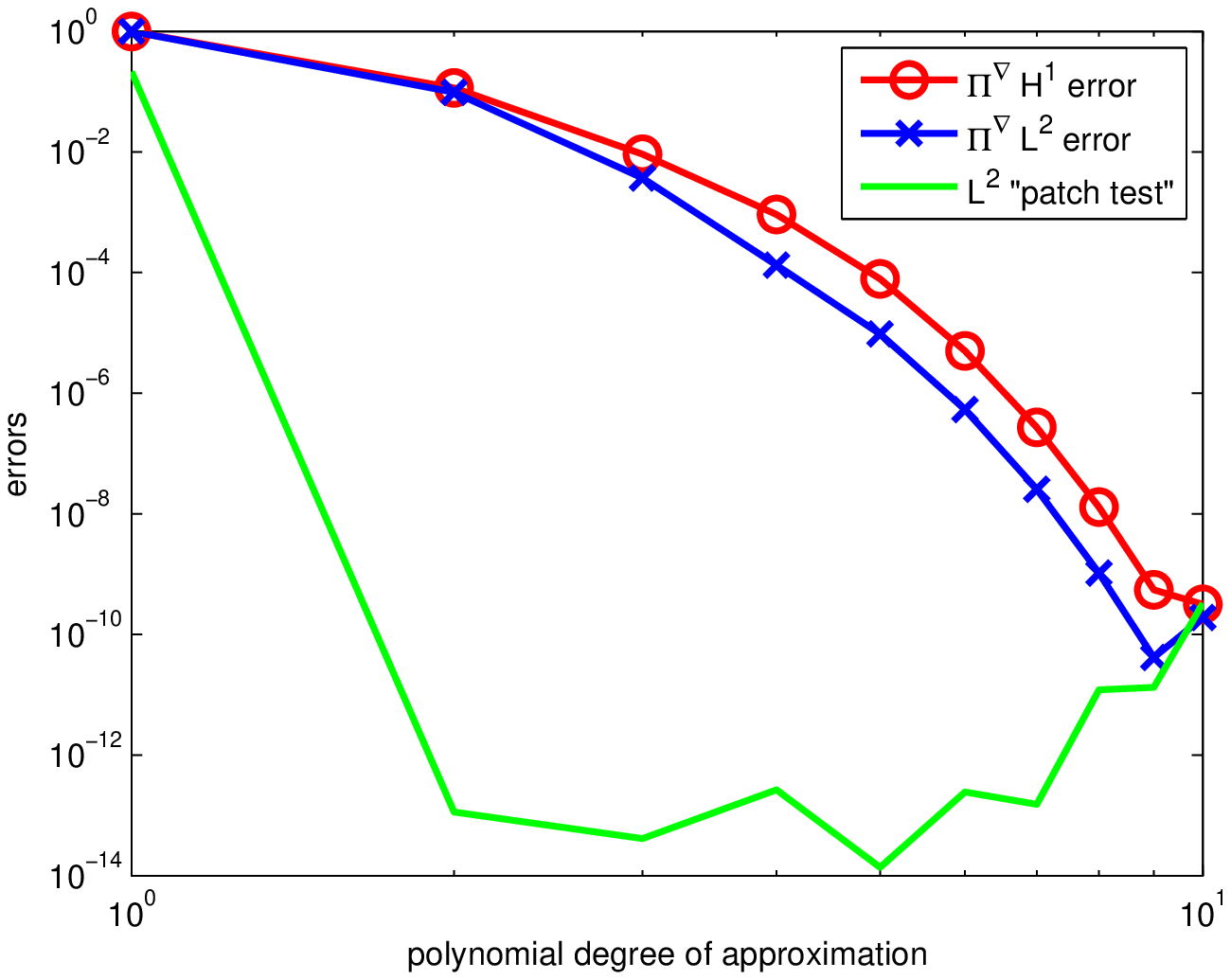}
\put(20,9.5){\colorbox{white}{\tiny 1}} \put(68,9.5){\tiny{2}} \put(94.5,9.5){\tiny{3}} \put(113,9.5){\tiny{4}} \put(127.5,9.5){\tiny{5}}
\put(139.5,9.5){\tiny{6}} \put(149,9.5){\tiny{7}} \put(158,9.5){\tiny{8}} \put(166,9.5){\tiny{9}} \put(170,9.5){\colorbox{white}{\tiny 10}}
\end{overpic}
\caption{$u(x,y)=\sin(\pi x)\sin(\pi y)$; unstructured triangle mesh (left); regular square mesh (right)} \label{figureerrorsfunzioneCinftypatchtest_stab1_a}
\end{figure}
\begin{figure}  [h]
\begin{overpic}[angle=0, width=0.45\textwidth]{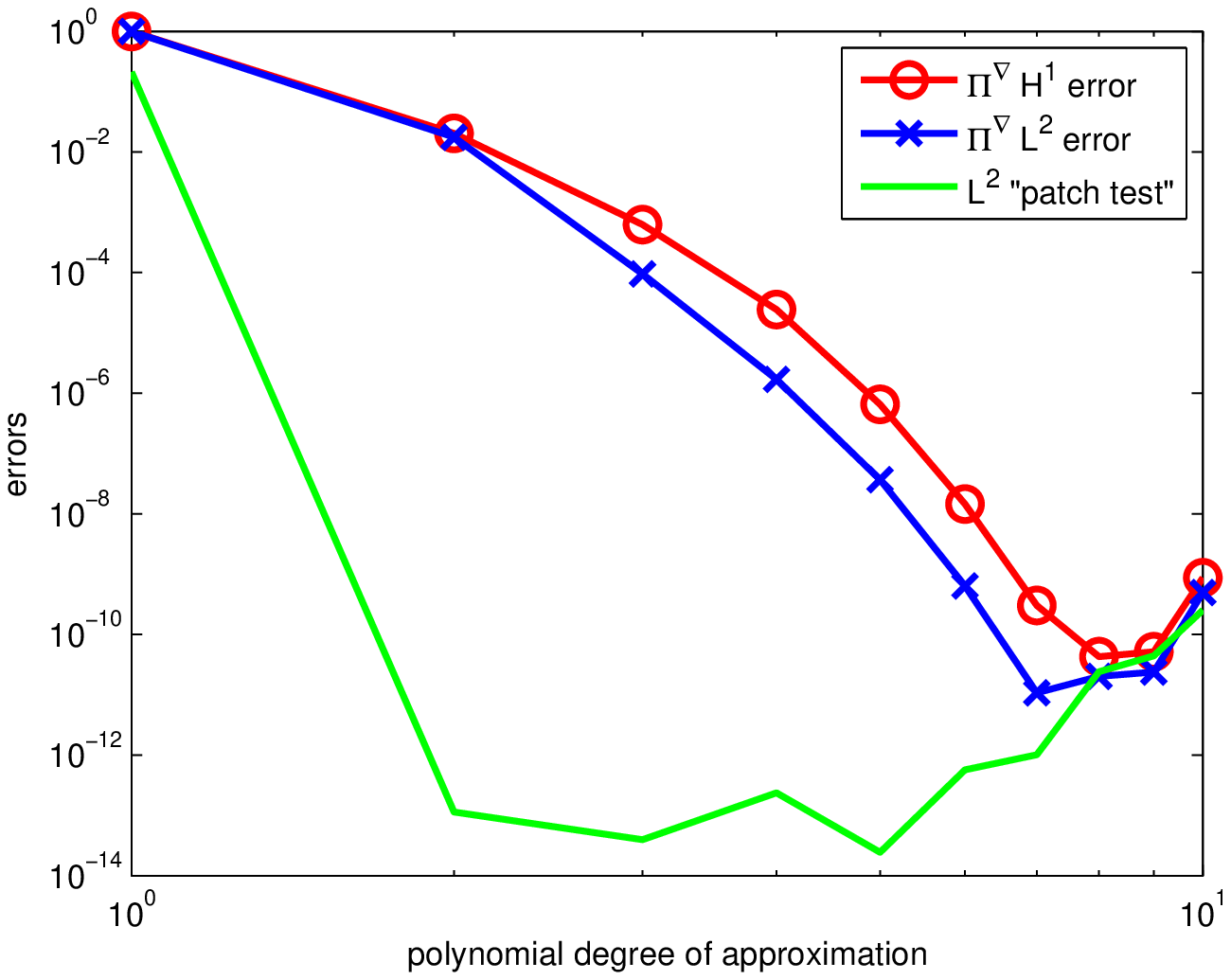}
\put(20,9.5){\colorbox{white}{\tiny 1}} \put(68,9.5){\tiny{2}} \put(94.5,9.5){\tiny{3}} \put(113,9.5){\tiny{4}} \put(127.5,9.5){\tiny{5}}
\put(139.5,9.5){\tiny{6}} \put(149,9.5){\tiny{7}} \put(158,9.5){\tiny{8}} \put(166,9.5){\tiny{9}} \put(170,9.5){\colorbox{white}{\tiny 10}}
\end{overpic}
\begin{overpic}[angle=0, width=0.45\textwidth]{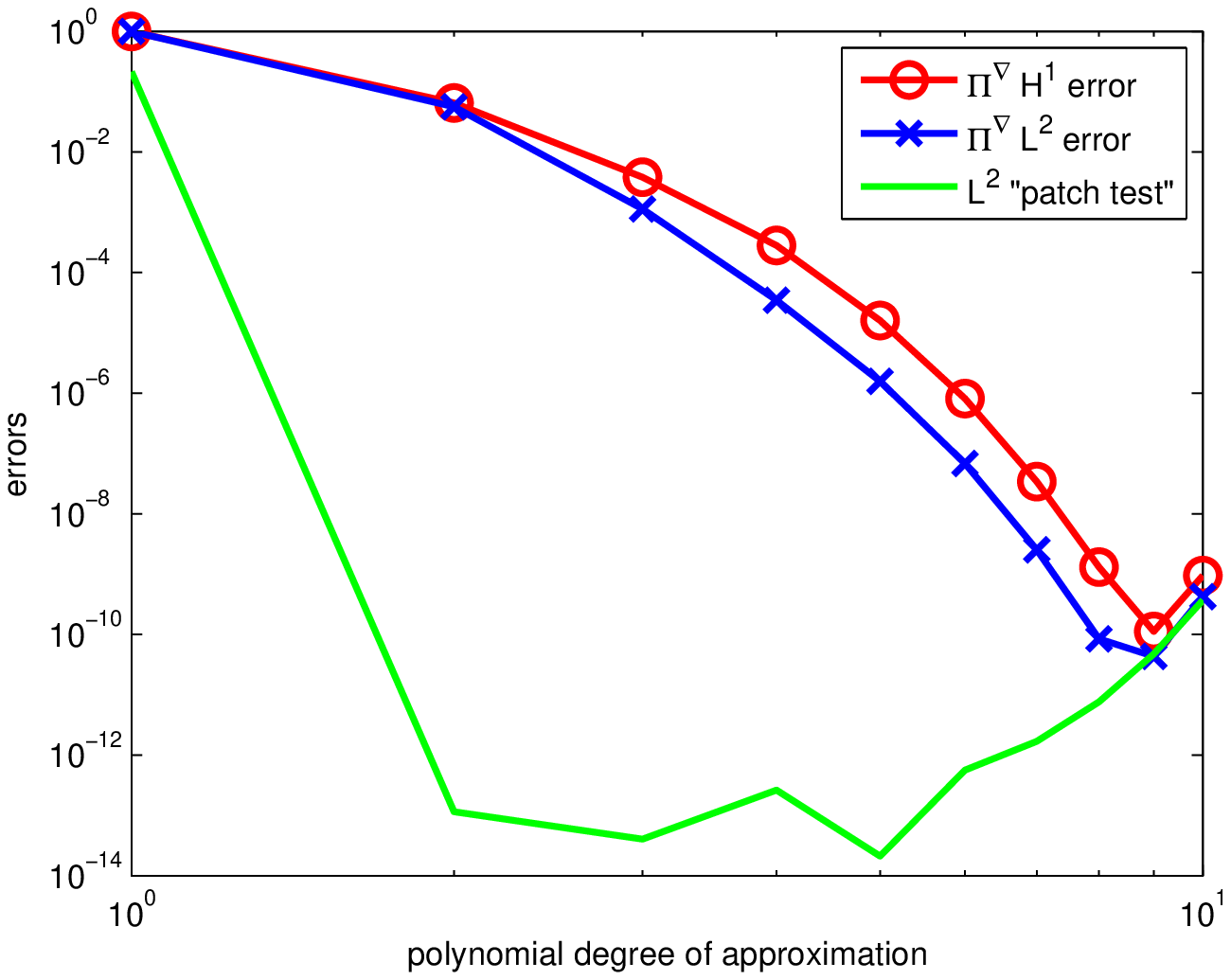}
\put(20,9.5){\colorbox{white}{\tiny 1}} \put(68,9.5){\tiny{2}} \put(94.5,9.5){\tiny{3}} \put(113,9.5){\tiny{4}} \put(127.5,9.5){\tiny{5}}
\put(139.5,9.5){\tiny{6}} \put(149,9.5){\tiny{7}} \put(158,9.5){\tiny{8}} \put(166,9.5){\tiny{9}} \put(170,9.5){\colorbox{white}{\tiny 10}}
\end{overpic}
\caption{$u(x,y)=\sin(\pi x)\sin(\pi y)$; regular hexagonal mesh (left); 4: Voronoi-Lloyd mesh (right)} \label{figureerrorsfunzioneCinftypatchtest_stab1_b}
\end{figure}

In accordance with Theorem \ref{theoremeponentialconvergence}, the exponential
convergence is evident from the decreasing slope in the error
graphs. Moreover, from Figures
\ref{figureerrorsfunzioneCinftypatchtest_stab1_a} and
\ref{figureerrorsfunzioneCinftypatchtest_stab1_b} we can observe
that the lower line is a good marker for the indication of the
machine algebra error.
% -----------------------------------------------------------------
\subsection{Convergence in $p$ for a function with finite Sobolev regularity} \label{subsectionnumericaltestsforafunctionwithfixedregularity}
% -----------------------------------------------------------------
Secondly, we present a similar behaviour test for the case of a problem with solution
\begin{equation} \label{caso test power}
u(r,\theta)=r^{2.5}\sin(2.5 \theta),
\end{equation}
where $(r,\theta)$ are the polar coordinates with respect to the origin.
Since the function is harmonic, the loading term $f=0$ and the Dirichlet boundary conditions are set in accordance with $u|_{\partial \Omega}$.
We note that $u\in H^{3.5- \varepsilon}(\Omega)$, $\forall \varepsilon >0$.
In Figures \ref{figureerrorsfunzioneSobolevpatchtest_stab1_a} and \ref{figureerrorsfunzioneSobolevpatchtest_stab1_b}, the segmented line represents a line of slope $5=2\cdot 2.5$.
Owing to Theorem \ref{theoremstimehpfinali}, we should have an estimate in $p$ of the type $p^{-a}$, $a=2.5$.
Anyhow, for this type of corner singularity, one could extend the technical result of \cite{babuskasurihpversionFEMwithquasiuniformmesh} and \cite{BabuSurioptimalconvergenceestimatepmethods}
obtaining error estimate in $p$ of the type $p^{-2a}$ also in our VEM framework.
Thus, we expect a slope for the $H^1$ error of the type $p^{-5}$, which is represented with the dashed line in Figures \ref{figureerrorsfunzioneSobolevpatchtest_stab1_a} and \ref{figureerrorsfunzioneSobolevpatchtest_stab1_b}.
\begin{figure}  [h]
\begin{overpic}[angle=0, width=0.45\textwidth]{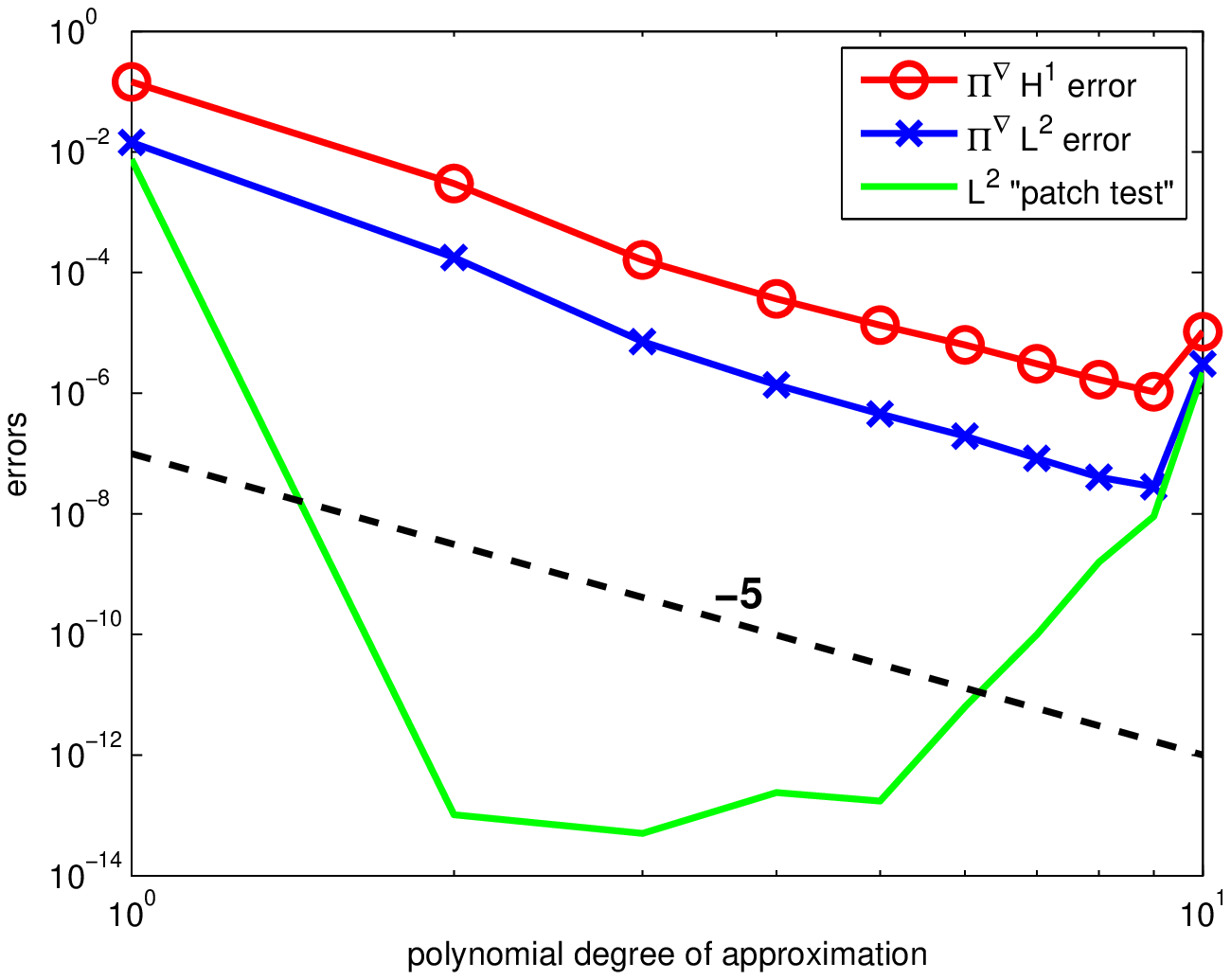}
\put(20,9.5){\colorbox{white}{\tiny 1}} \put(68,9.5){\tiny{2}} \put(94.5,9.5){\tiny{3}} \put(113,9.5){\tiny{4}} \put(127.5,9.5){\tiny{5}}
\put(139.5,9.5){\tiny{6}} \put(149,9.5){\tiny{7}} \put(158,9.5){\tiny{8}} \put(166,9.5){\tiny{9}} \put(170,9.5){\colorbox{white}{\tiny 10}}
\end{overpic}
\begin{overpic}[angle=0, width=0.45\textwidth]{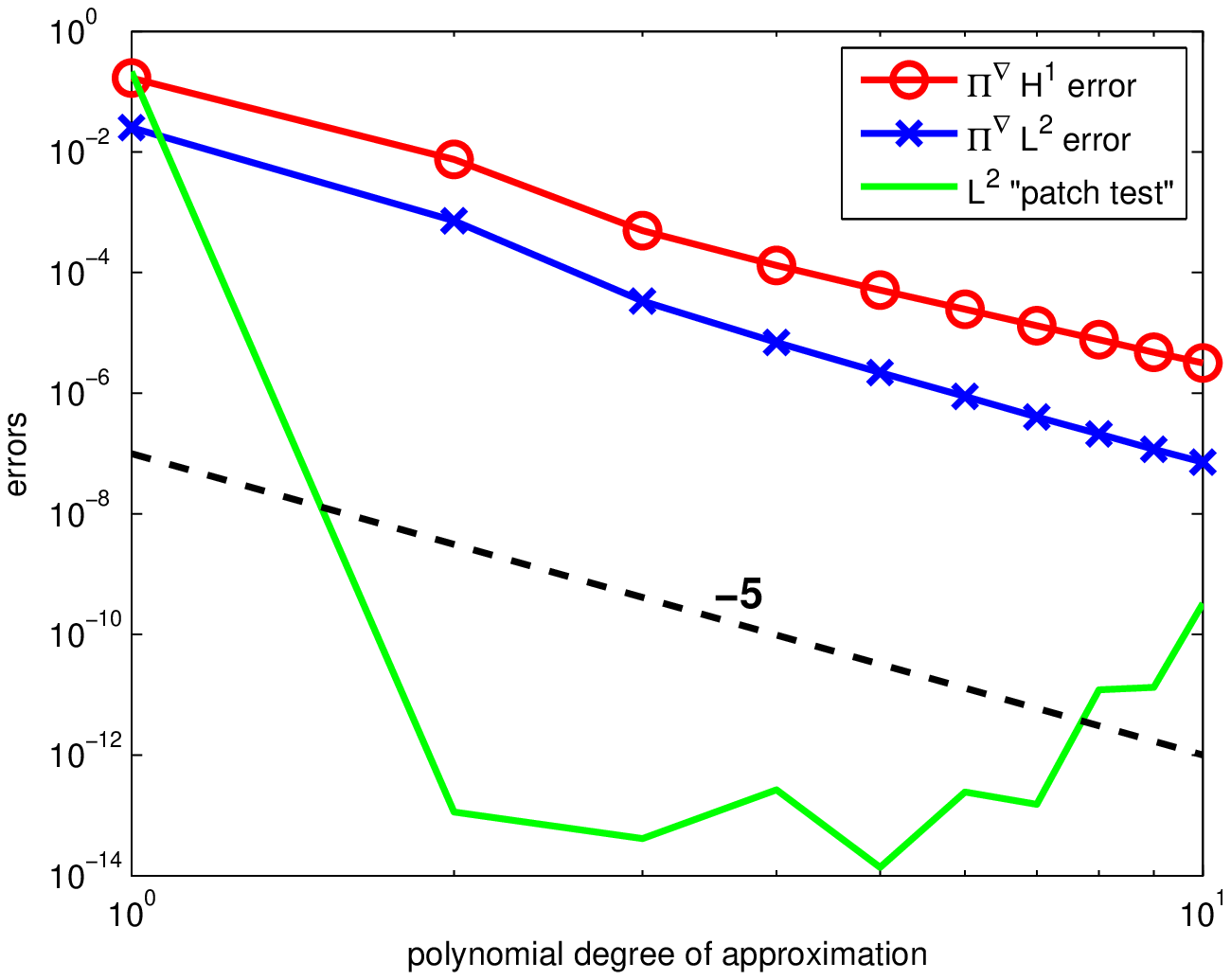}
\put(20,9.5){\colorbox{white}{\tiny 1}} \put(68,9.5){\tiny{2}} \put(94.5,9.5){\tiny{3}} \put(113,9.5){\tiny{4}} \put(127.5,9.5){\tiny{5}}
\put(139.5,9.5){\tiny{6}} \put(149,9.5){\tiny{7}} \put(158,9.5){\tiny{8}} \put(166,9.5){\tiny{9}} \put(170,9.5){\colorbox{white}{\tiny 10}}
\end{overpic}
\caption{$u=r^{2.5}\sin(2.5 \theta)$; unstructured triangle mesh (left); 2: regular square mesh (right)} \label{figureerrorsfunzioneSobolevpatchtest_stab1_a}
\end{figure}
\begin{figure} [h]
\begin{overpic}[angle=0, width=0.45\textwidth]{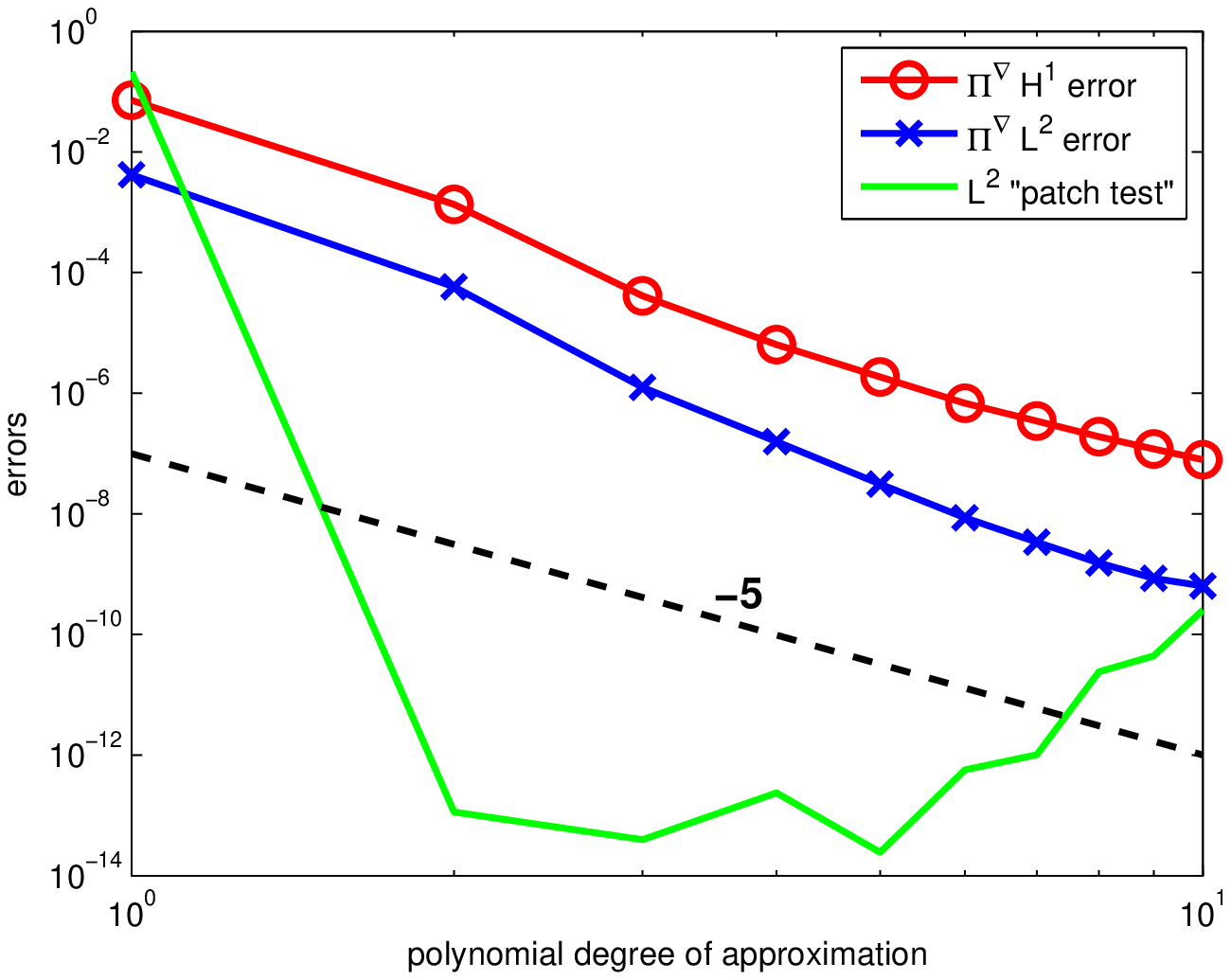}
\put(20,9.5){\colorbox{white}{\tiny 1}} \put(68,9.5){\tiny{2}} \put(94.5,9.5){\tiny{3}} \put(113,9.5){\tiny{4}} \put(127.5,9.5){\tiny{5}}
\put(139.5,9.5){\tiny{6}} \put(149,9.5){\tiny{7}} \put(158,9.5){\tiny{8}} \put(166,9.5){\tiny{9}} \put(170,9.5){\colorbox{white}{\tiny 10}}
\end{overpic}
\begin{overpic}[angle=0, width=0.45\textwidth]{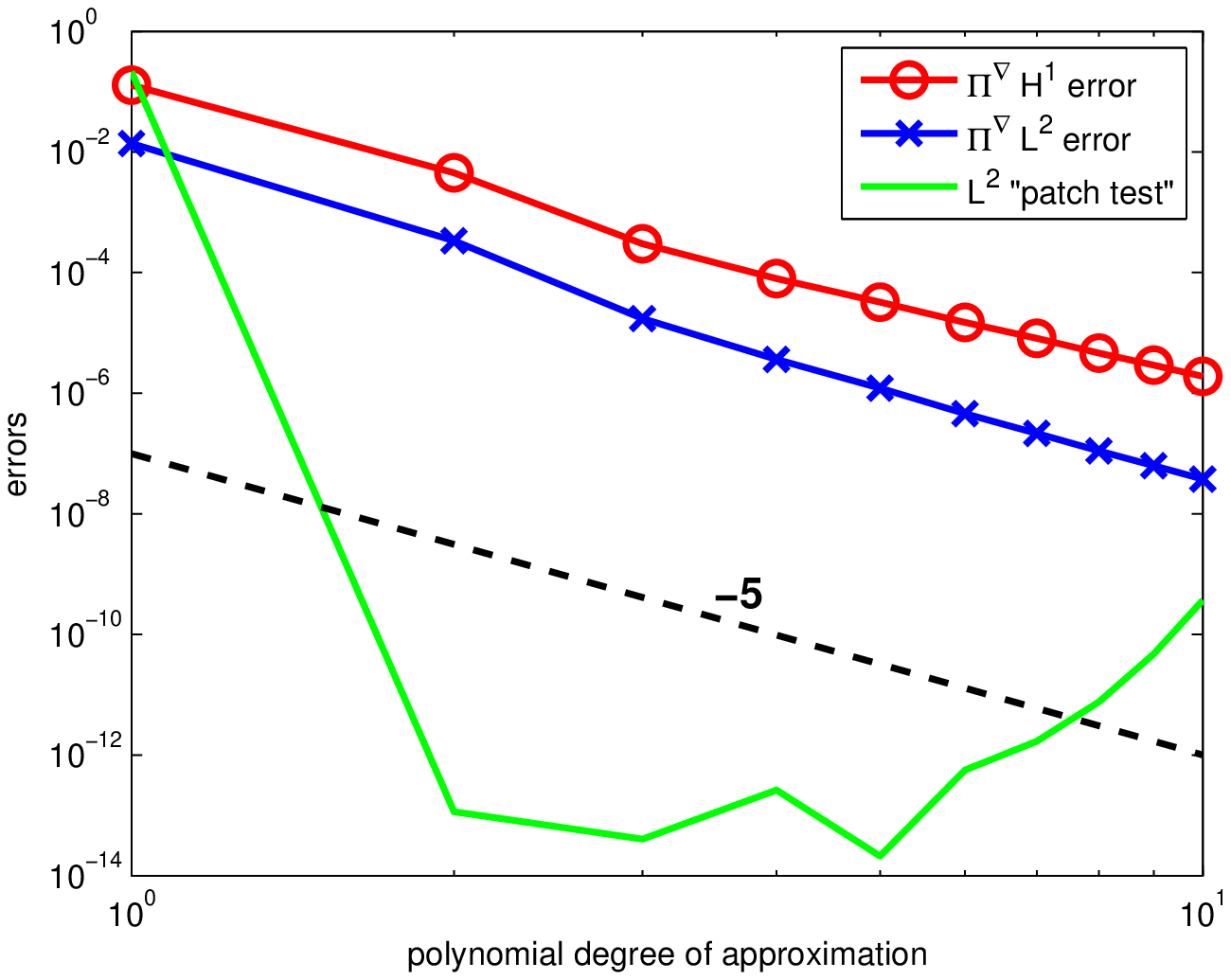}
\put(20,9.5){\colorbox{white}{\tiny 1}} \put(68,9.5){\tiny{2}} \put(94.5,9.5){\tiny{3}} \put(113,9.5){\tiny{4}} \put(127.5,9.5){\tiny{5}}
\put(139.5,9.5){\tiny{6}} \put(149,9.5){\tiny{7}} \put(158,9.5){\tiny{8}} \put(166,9.5){\tiny{9}} \put(170,9.5){\colorbox{white}{\tiny 10}}
\end{overpic}
\caption{$u=r^{2.5}\sin(2.5 \theta)$; regular hexagonal mesh (left); 4: Voronoi-Lloyd mesh (right)} \label{figureerrorsfunzioneSobolevpatchtest_stab1_b}
\end{figure}
Figures \ref{figureerrorsfunzioneSobolevpatchtest_stab1_a} and \ref{figureerrorsfunzioneSobolevpatchtest_stab1_b}, are in agreement with such observation.

% -----------------------------------------------------------------
\subsection{Convergence in $h$} \label{subsectionhconvergence}
% -----------------------------------------------------------------
In this subsection, we show the convergence rate when the polynoimial degree is kept fixed and the meshsize goes to zero.
We consider a sequence of hexagonal and Voronoi-Lloyd meshes and we study the same harmonic test case as in Section \ref{subsectionumericaltestsnforaCinftyfunction}. In particular, we examine the case $p=3$ and $p=5$.
\begin{figure}  [h]
\centering
\subfigure {\includegraphics [angle=0, width=0.45\textwidth]{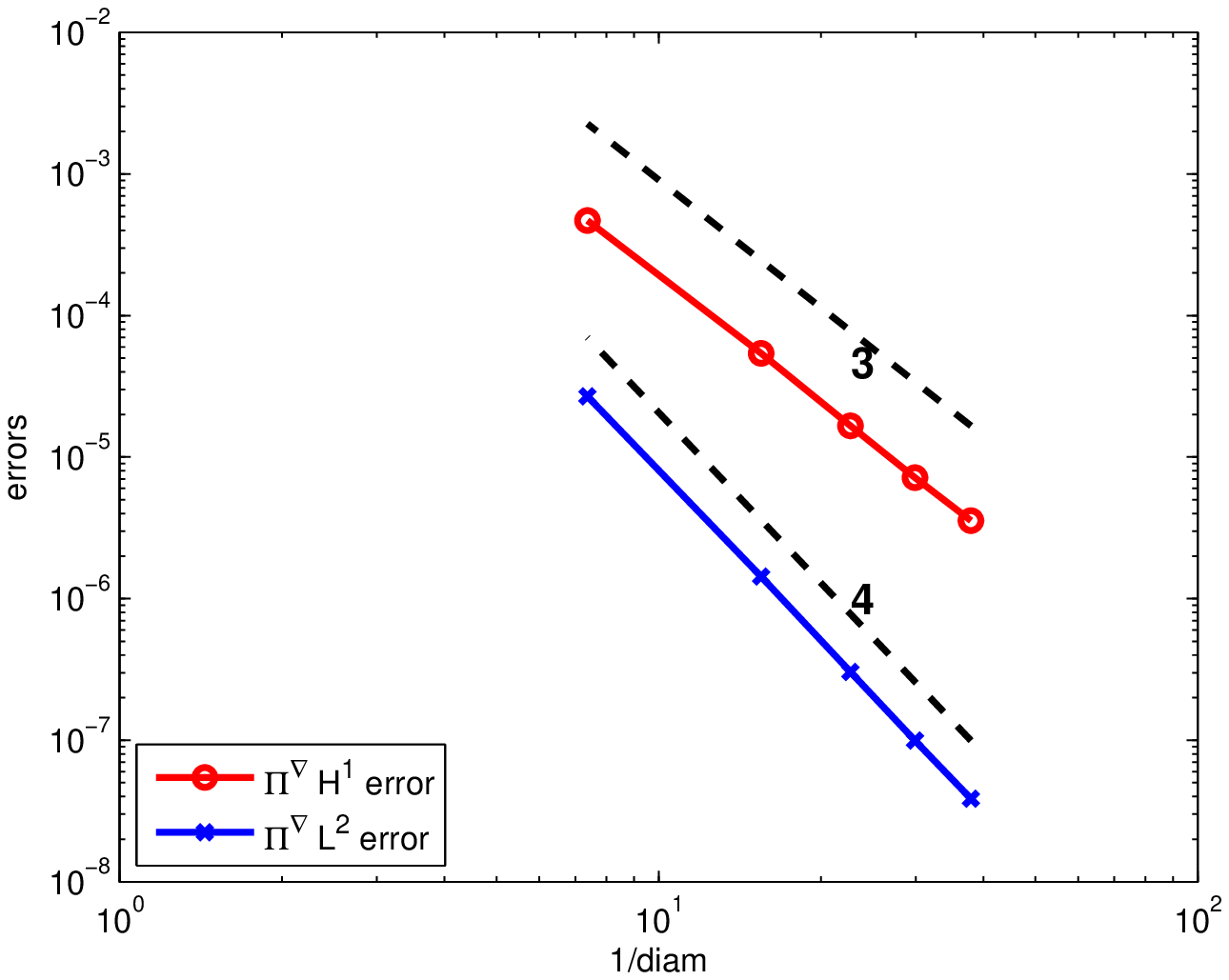}}
\subfigure {\includegraphics [angle=0, width=0.45\textwidth]{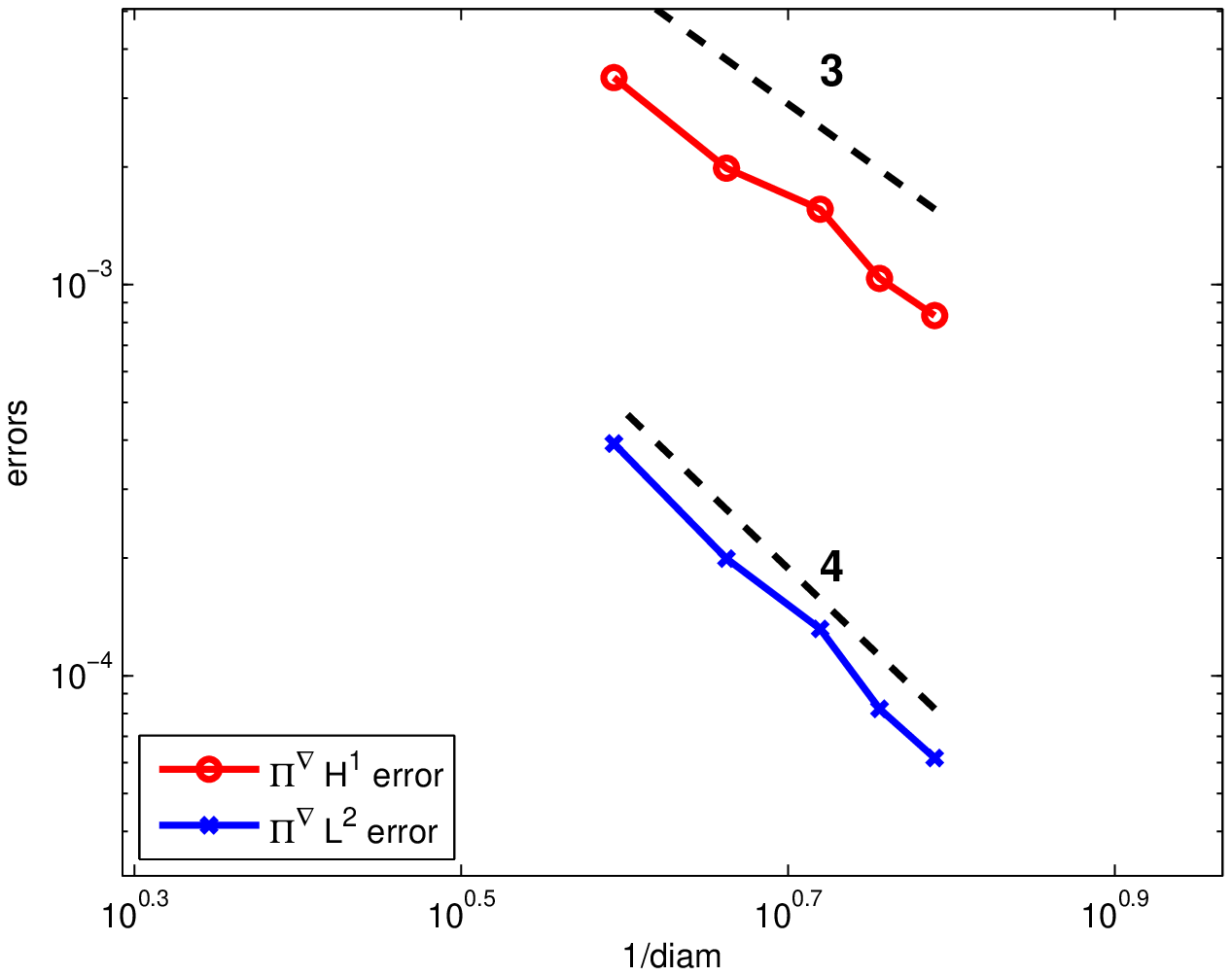}}
\caption{regular hexagonal mesh (left); Voronoi-Lloyd mesh (right); p=3} \label{h_errors_p3}
\end{figure}
We observe that the slope of the errors are in accordance with  Theorem \ref{theoremstimehpfinali} and with estimate \eqref{stimeL2hpfinalia}.
\begin{figure}  [h]
\centering
\subfigure {\includegraphics [angle=0, width=0.45\textwidth]{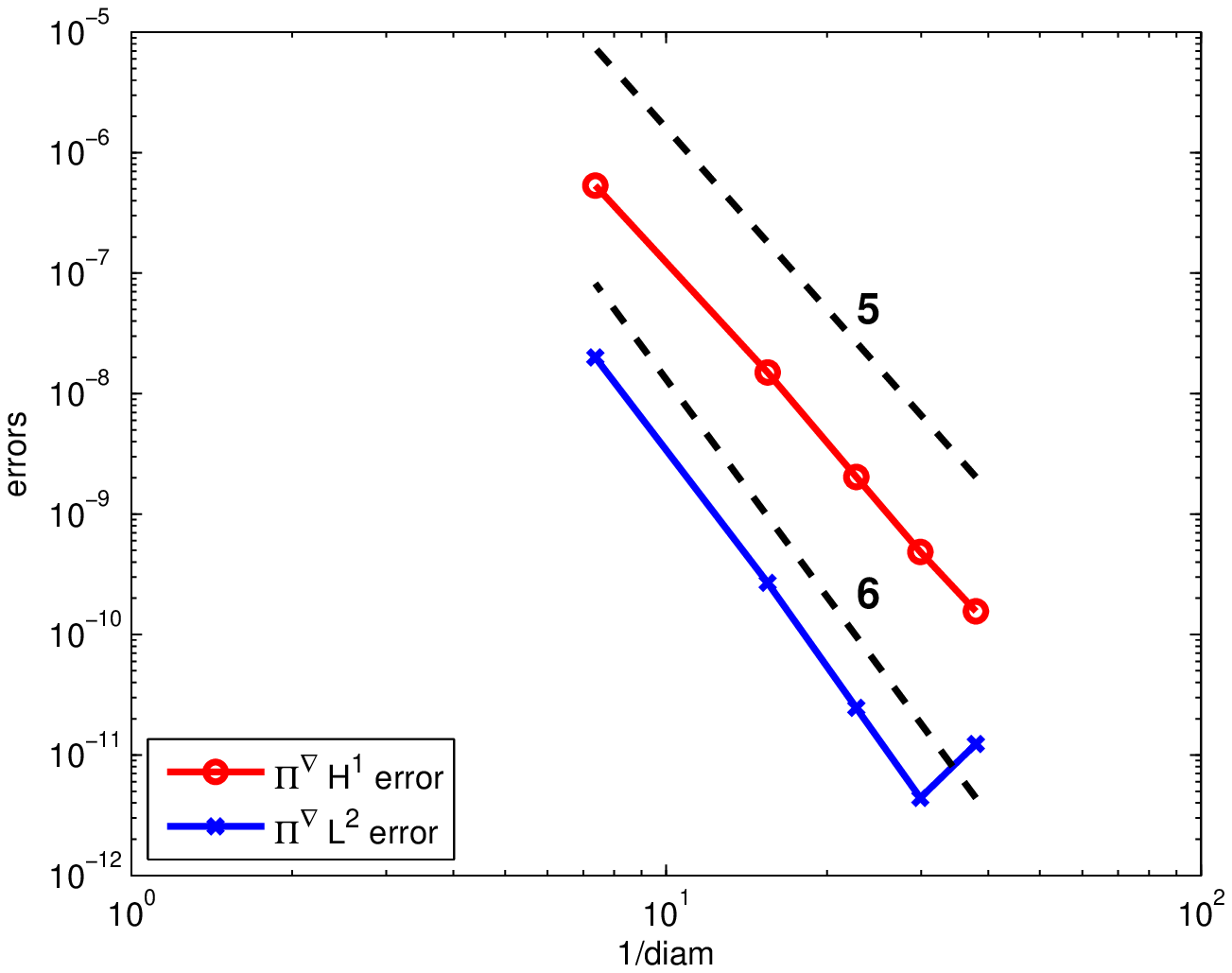}}
\subfigure {\includegraphics [angle=0, width=0.45\textwidth]{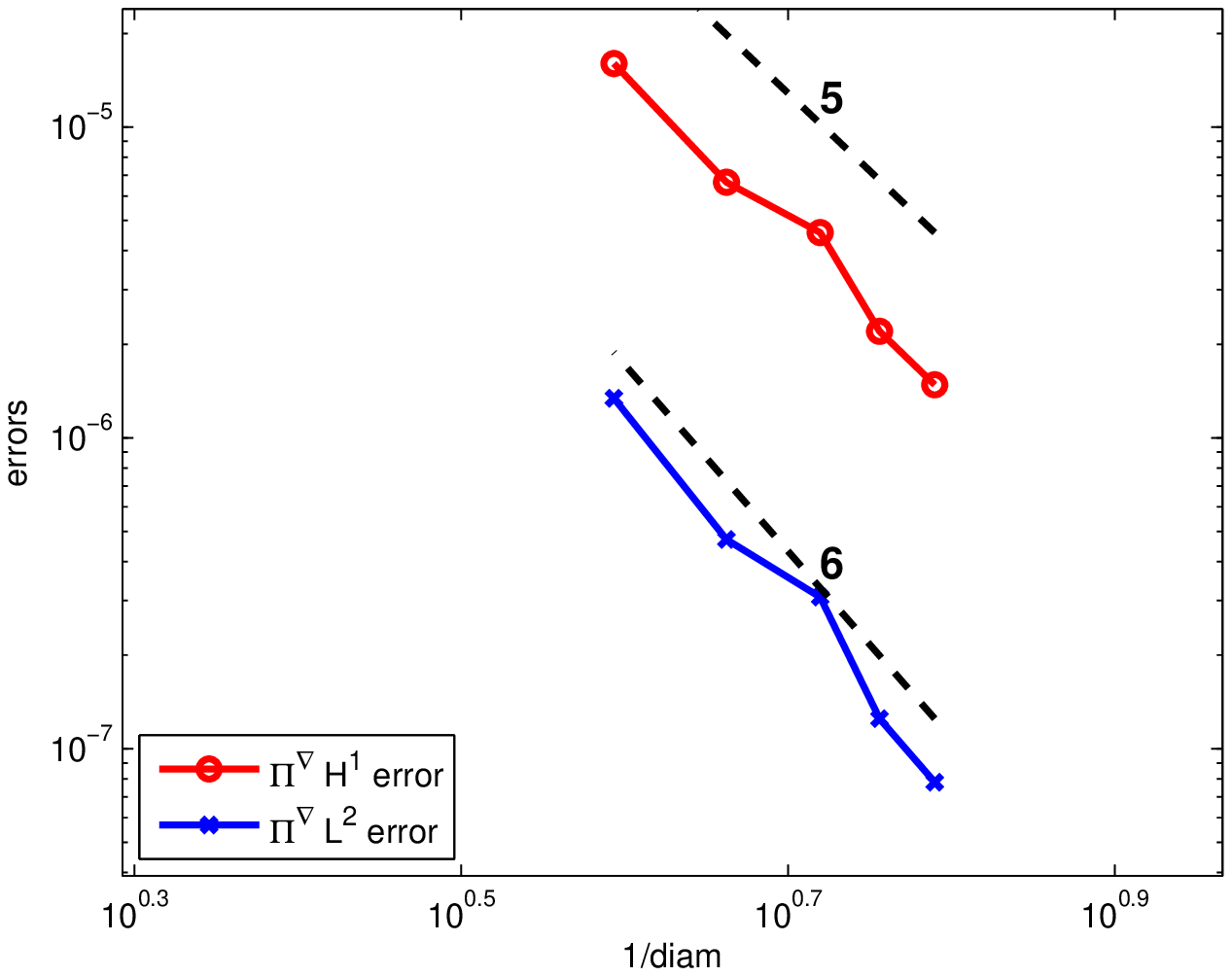}}
\caption{regular hexagonal mesh (left); VoronoiLloyd mesh (right); p=5} \label{h_errors_p5}
\end{figure}
The same considerations about Figure \ref{h_errors_p3} are still valid for Figure \ref{h_errors_p5}.
We only point out that the strange $L^2$ error behaviour for the final step is due to the machine precision error.
%%%%%%%%%%%%%%%%%%%%%%%%%%%%%%%%%%%%%%%%%%%%%%%%%%%%%%%%%%%%%%%%%%%%%%%%%%%

% -----------------------------------------------------------------
\section{Appendix} \label{sectionappendix}
% -----------------------------------------------------------------

In this appendix, we study the behaviour of the condition number of the global stiffness matrix of problem \eqref{discreteproblem} and we explore some alternatives in the choice of the local VEM basis.
We note that, as it happens in Finite Elements (see \cite{AdjeridAiffaFlahertyhierarchicalfiniteelementbasis} and \cite{BabuskaGriebelPitkarantaselectingshapefunctions}),
the main responsible for the growth of the condition number (when $p$ increases) are the internal ``bubble'' basis functions.
In Section \ref{subsectionthreeexplicitbasis}, we numerically investigate the behaviour of the condition number by changing the polynomial basis $\{q_{\boldalpha}\}$ of $\mathbb P_{p-2}(K)$ (for all $K\in \tauh$),
introduced in Section \ref{sectionVEMforthePoissonproblem} for the definition of the local internal degrees of freedom.
In Section \ref{subsectionavirtualgramschmidtprocess}, we discuss an orthogonalization technique that strongly reduces the condition number,
but is unstable with respect to machine precision.
% -----------------------------------------------------------------
\subsection{Three explicit bases} \label{subsectionthreeexplicitbasis}
% -----------------------------------------------------------------
In this subsection, we consider three explicit basis:
\begin{itemize}
\item $\{q_{\boldalpha}^1\}$, the same basis introduced for instance in \cite{VEMvolley} and \cite{hitchhikersguideVEM}, that is to say:
\begin{equation} \label{basisVEMvolley}
q_{\boldalpha}^1= \left( \frac{\mathbf x - \mathbf x_K}{h_K}  \right)^{\boldalpha}, \quad \forall \boldalpha\in \mathbb N_0^2,\, |\boldalpha|\le p-2,
\end{equation}
where $\mathbf{x_K}$ and $h_K$ are respectively the barycenter and the diameter of the polygon $K$.
\item $\{q_{\boldalpha}^2\}$,which is defined by
\begin{equation} \label{basisVEMvolleyscaled}
q_{\boldalpha}^2=\frac{q_{\boldalpha}^1}{||q_{\boldalpha}^1||_{0,K}}, \quad \forall \boldalpha\in \mathbb N_0^2,\, |\boldalpha|\le p-2.
\end{equation}
\item $\{q_{\boldalpha}^3\}$, which is a Legendre-type basis. In order to define it, we recall that $\NVK$ is the number of vertices of $K$ and $\{V_i\}_{i=1}^{\NVK}$ is the set of vertices of K;
moreover, we set
\[
\begin{split}
 &\widetilde {\mathbf{x_K}} = (\widetilde x_K , \widetilde y_K) = \left( \frac{ x_{V,\text{max}} - x_{V,\text{min}} }{2} ;  \frac{ y_{V,\text{max}} - y_{V,\text{min}} }{2} \right),\,\\
&h_K^x=| x_{V,\text{max}} + x_{V,\text{min}} |,\, h_K^y=| y_{V,\text{max}} + y_{V,\text{min}} |,\\
\end{split}
\]
where $ x_{V,\text{max}} =\max_{i=1}^{\NVK} x_i$, $ x_{V,\text{min}}= \min_{i=1}^{\NVK} x_i $, $ y_{V,\text{max}} =\max_{i=1}^{\NVK} y_i$, $ y_{V,\text{min}}= \min_{i=1}^{\NVK} y_i$.
Besides, let $L_s(\cdot)$ be the Legendre polynomial of degree $s$ on the segment $[-1,1]$ (see e.g. \cite{GhizzettiOssiciniquadratureformulae} for the properties of Legendre polynomials).
Then, we are able to define the basis:
\begin{equation} \label{basisLegendre}
q_{\boldalpha}^3=L_{\alpha_1}\left( 2\frac{x-\widetilde {x_K}}{h_K^x} \right) L_{\alpha_2}\left( 2\frac{y-\widetilde {y_K}}{h_K^y} \right),\quad \forall \boldalpha\in \mathbb N_0^2,\, |\boldalpha|\le p-2.
\end{equation}
\end{itemize}
We observe that the third choice should be, at least at a first glance, better than the other two, thanks to orthogonality properties of Legendre polynomials. We will see that instead this is not the case in general.
Indeed, the orthogonality properties of the Legendre basis are quickly lost when the considered domain is not rectangular.
In our tests, we consider the same meshes already used in Section \ref{sectionnumericalresults}; see Figure \ref{figurefourmeshes}.
In Figures \ref{figurecond3mesh_a} and \ref{figurecond3mesh_b}, we compare the behaviour of the condition number,
given by the Matlab command \emph{cond}, for the three choice of the bases mentioned  above and the four meshes.
\begin{figure}  [h]
\centering
\subfigure {\includegraphics [angle=0, width=0.45\textwidth]{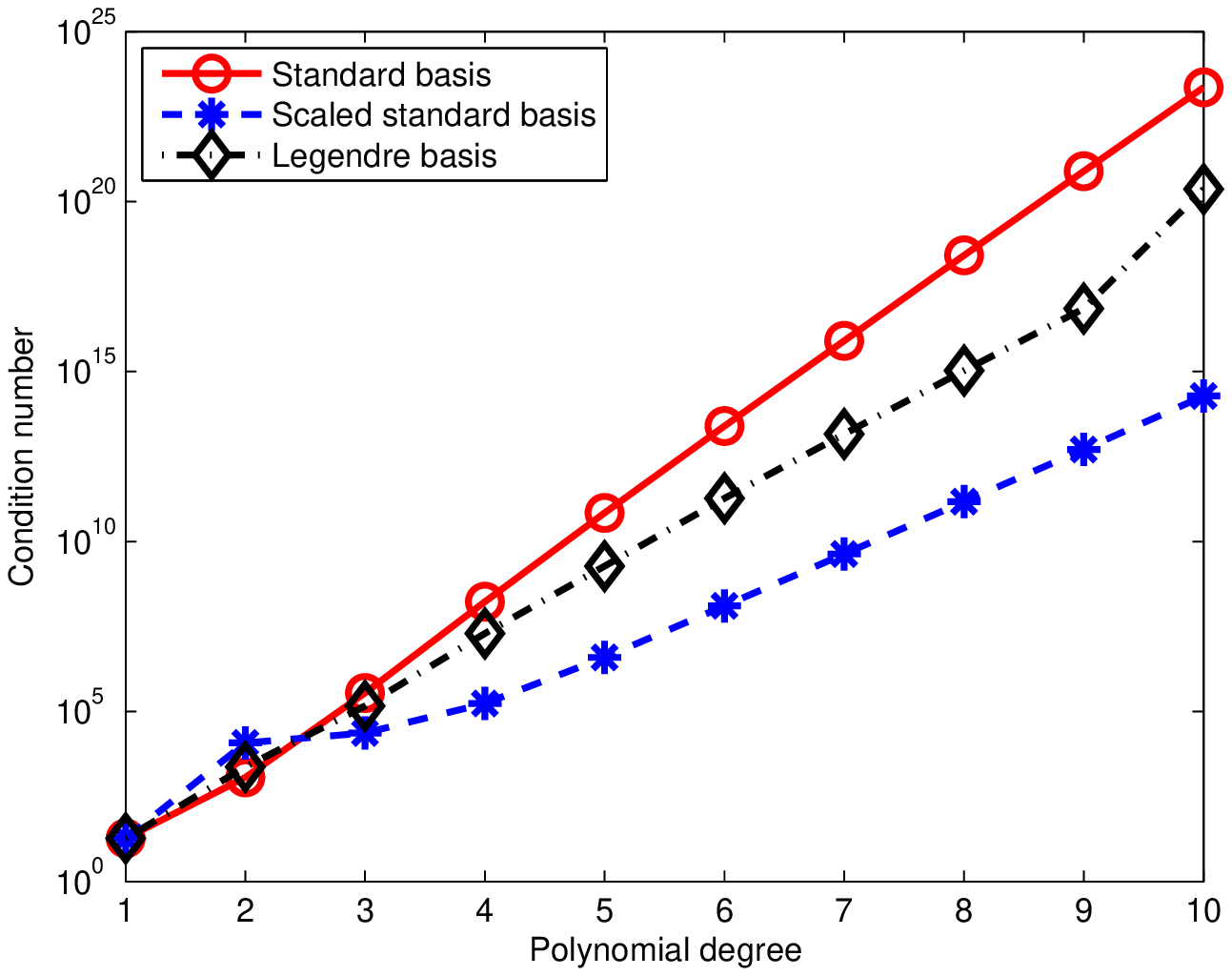}}
\subfigure {\includegraphics [angle=0, width=0.45\textwidth]{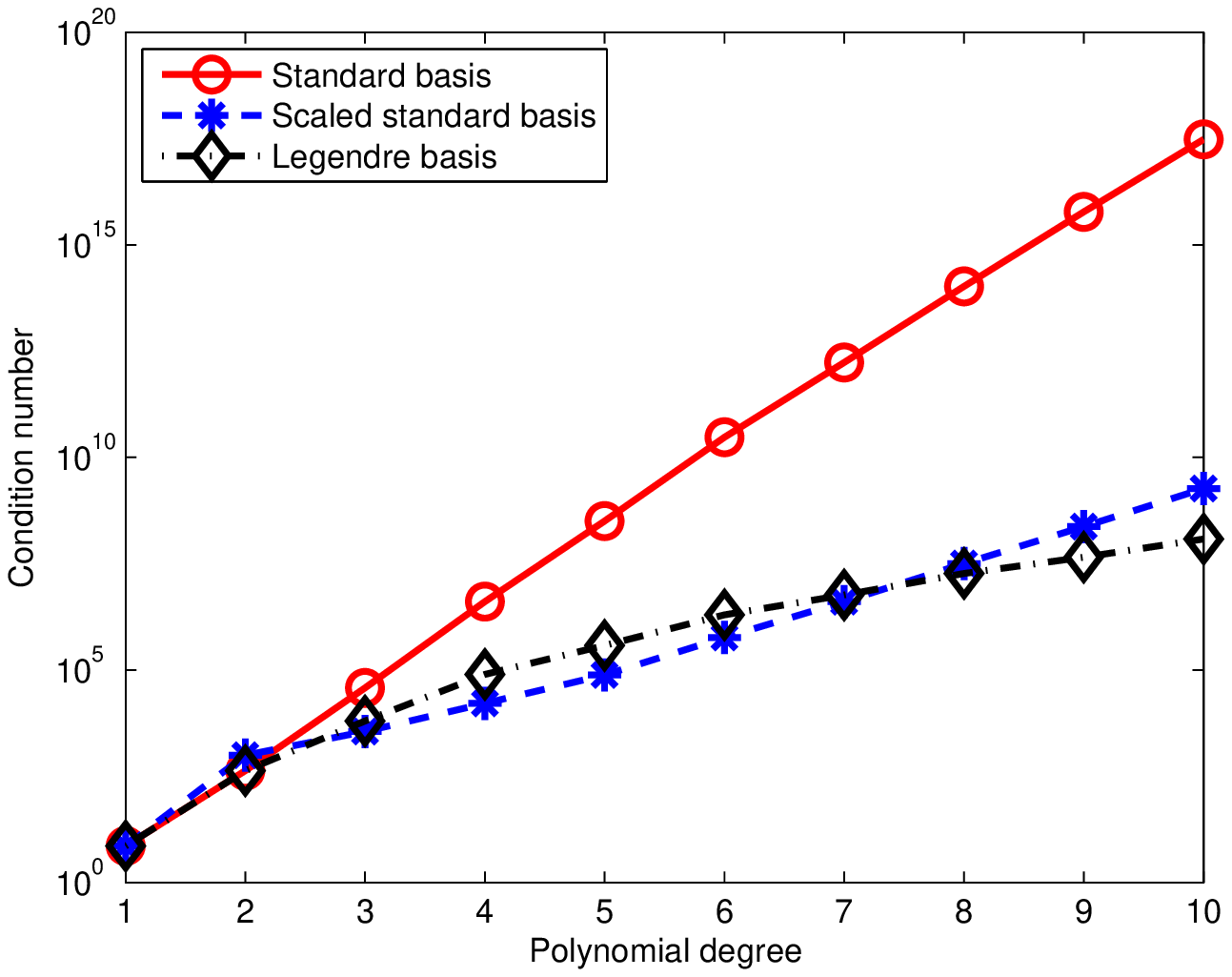}}
\caption{1: unstructured triangle mesh; 2: regular square mesh} \label{figurecond3mesh_a}
\end{figure}
\begin{figure}  [h]
\centering
\subfigure {\includegraphics [angle=0, width=0.45\textwidth]{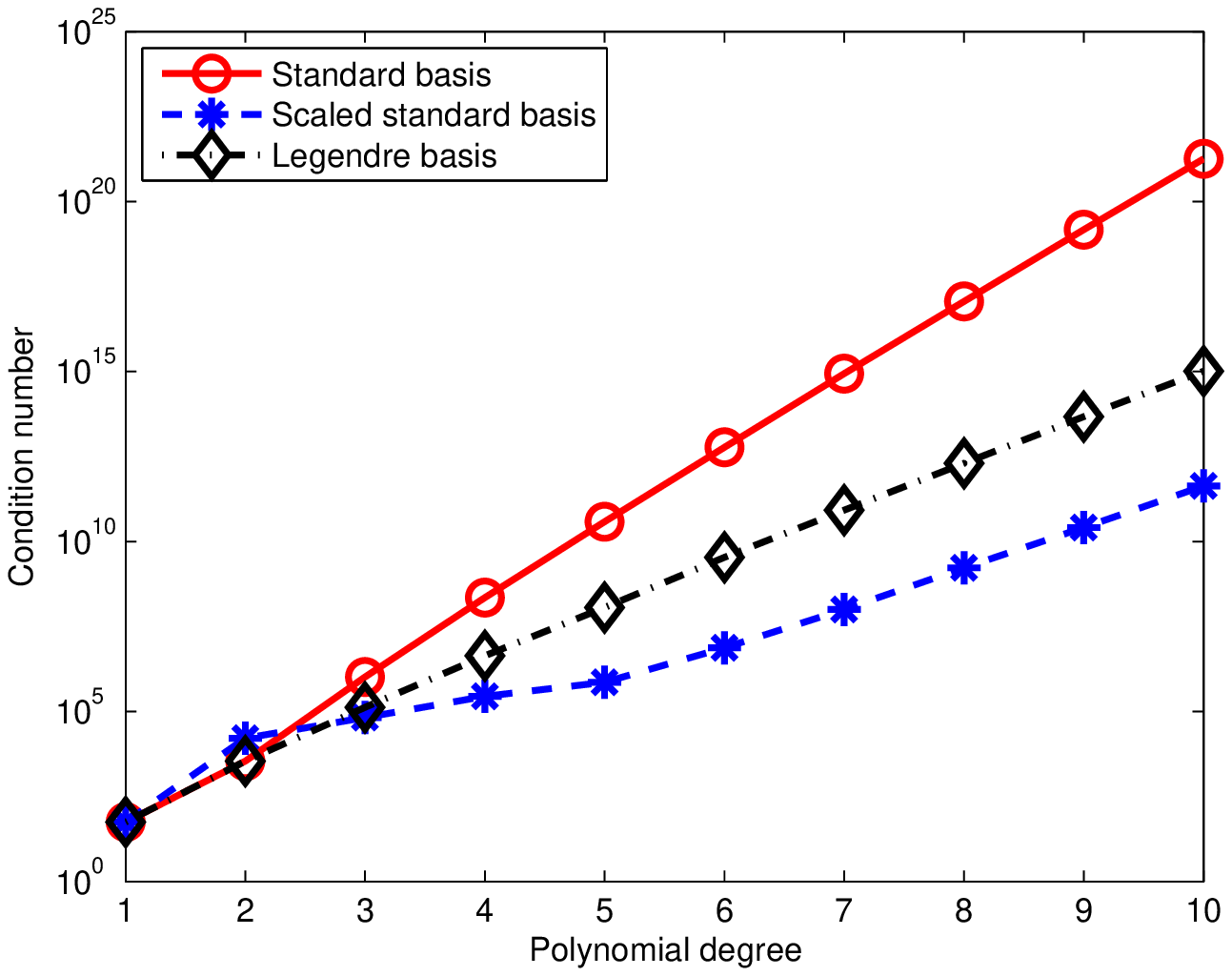}}
\subfigure {\includegraphics [angle=0, width=0.45\textwidth]{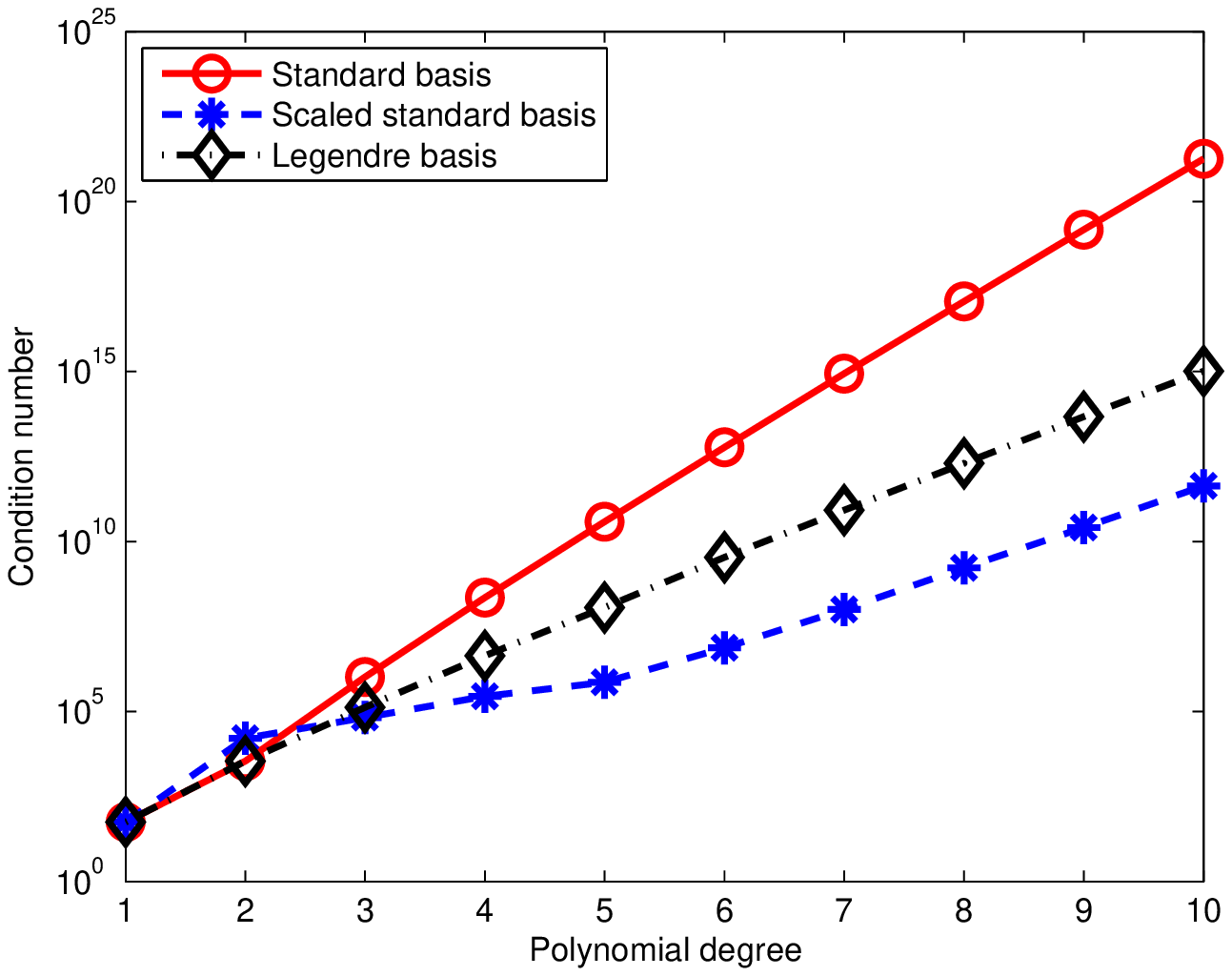}}
\caption{3: regular hexagonal mesh; 4: VoronoiLloyd mesh} \label{figurecond3mesh_b}
\end{figure}
We stress the fact that the Legendre-type basis performs better in the case of the square mesh; this is believable thanks to the orthogonality properties of the Legendre polynomials.
On the contrary, one can see that more general meshes, such as the hexagonal, unstructured triangular and Voronoi-Lloyd ones, the best result is obtained with the $L^2$-scaled basis.
Finally, we present in Figures \ref{figurethreeerrorssinsin_a}, \ref{figurethreeerrorssinsin_b}, \ref{figurethreeerrors2punto5_a} and \ref{figurethreeerrors2punto5_b} some ``comparison tests''
in which we report the error $|u-\Pinablap u_h|_{1,\Omega}$ (see \eqref{definitionPinabla}), by using the four different meshes in Figure \ref{figurefourmeshes} and the three different bases.
We consider the same two test cases of Section \ref{sectionnumericalresults}.
\begin{figure} [h]
\begin{overpic}[angle=0, width=0.45\textwidth]{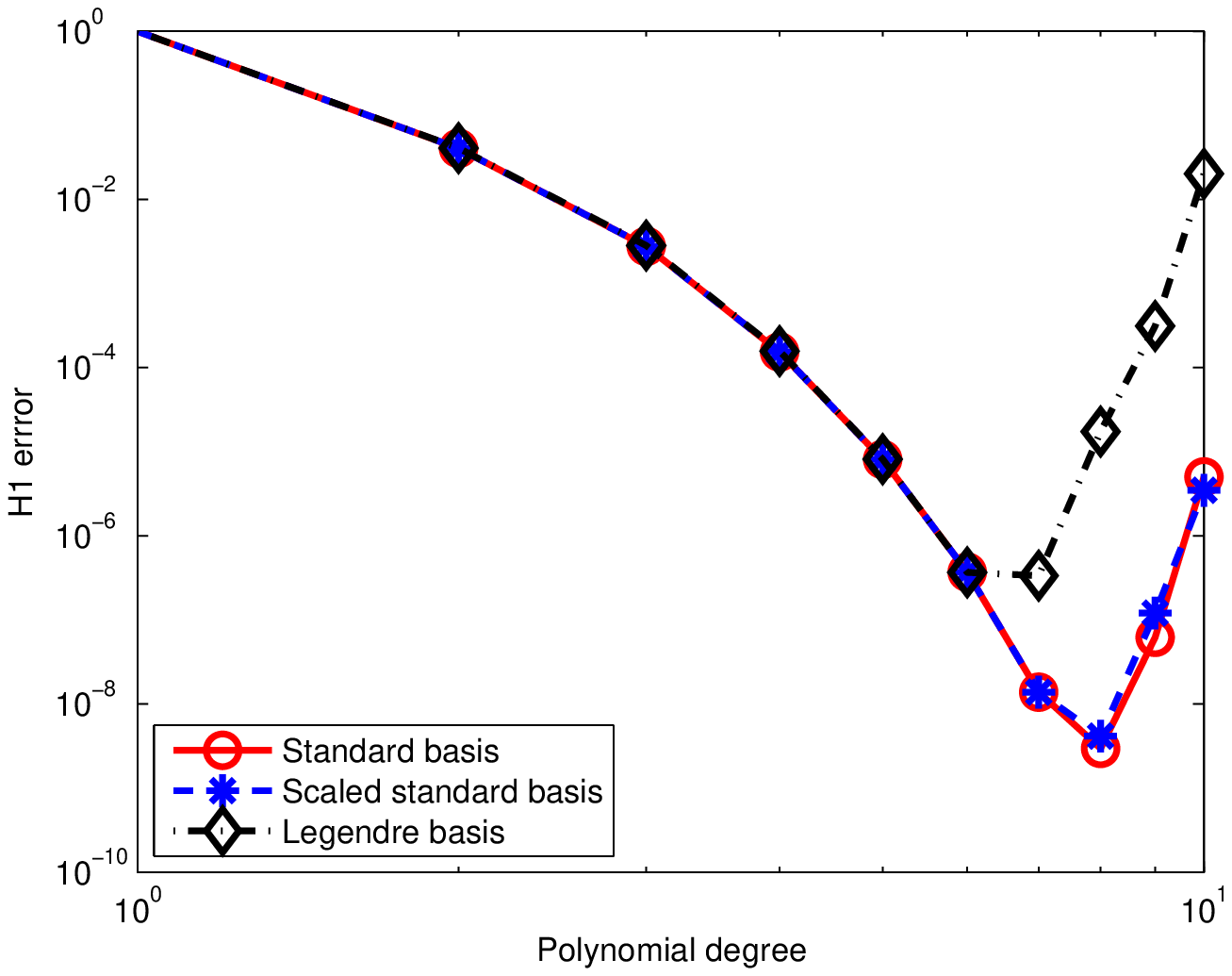}
\put(20,9.5){\colorbox{white}{\tiny 1}} \put(68,9.5){\tiny{2}} \put(94.5,9.5){\tiny{3}} \put(113,9.5){\tiny{4}} \put(127.5,9.5){\tiny{5}}
\put(139.5,9.5){\tiny{6}} \put(149,9.5){\tiny{7}} \put(158,9.5){\tiny{8}} \put(166,9.5){\tiny{9}} \put(170,9.5){\colorbox{white}{\tiny 10}}
\end{overpic}
\begin{overpic}[angle=0, width=0.45\textwidth]{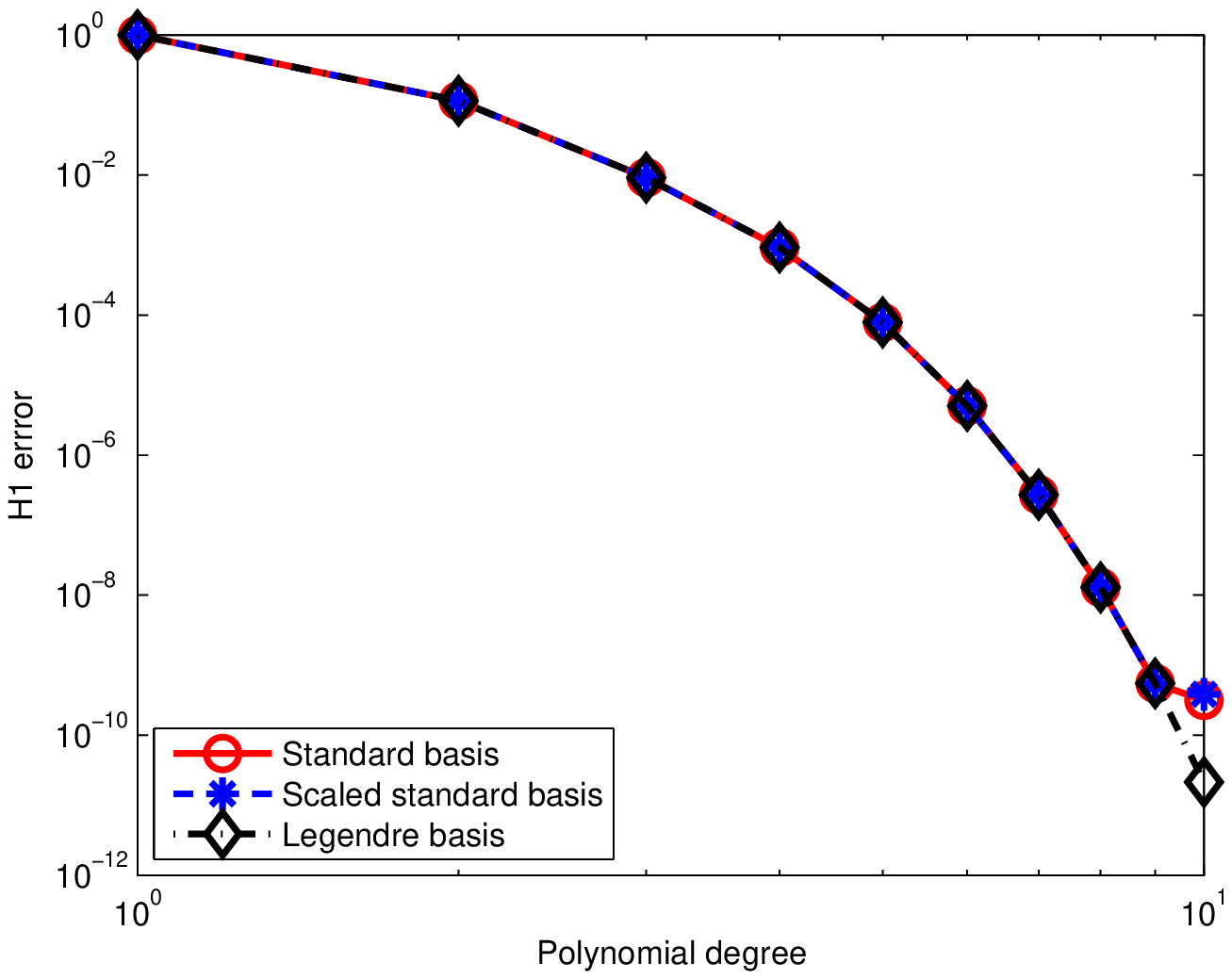}
\put(20,9.5){\colorbox{white}{\tiny 1}} \put(68,9.5){\tiny{2}} \put(94.5,9.5){\tiny{3}} \put(113,9.5){\tiny{4}} \put(127.5,9.5){\tiny{5}}
\put(139.5,9.5){\tiny{6}} \put(149,9.5){\tiny{7}} \put(158,9.5){\tiny{8}} \put(166,9.5){\tiny{9}} \put(170,9.5){\colorbox{white}{\tiny 10}}
\end{overpic}
\caption{$u(x,y)=\sin(\pi x)\sin(\pi y)$; 1: unstructured triangle mesh; 2: regular square mesh} \label{figurethreeerrorssinsin_a}
\end{figure}
\begin{figure} [h]
\begin{overpic}[angle=0, width=0.45\textwidth]{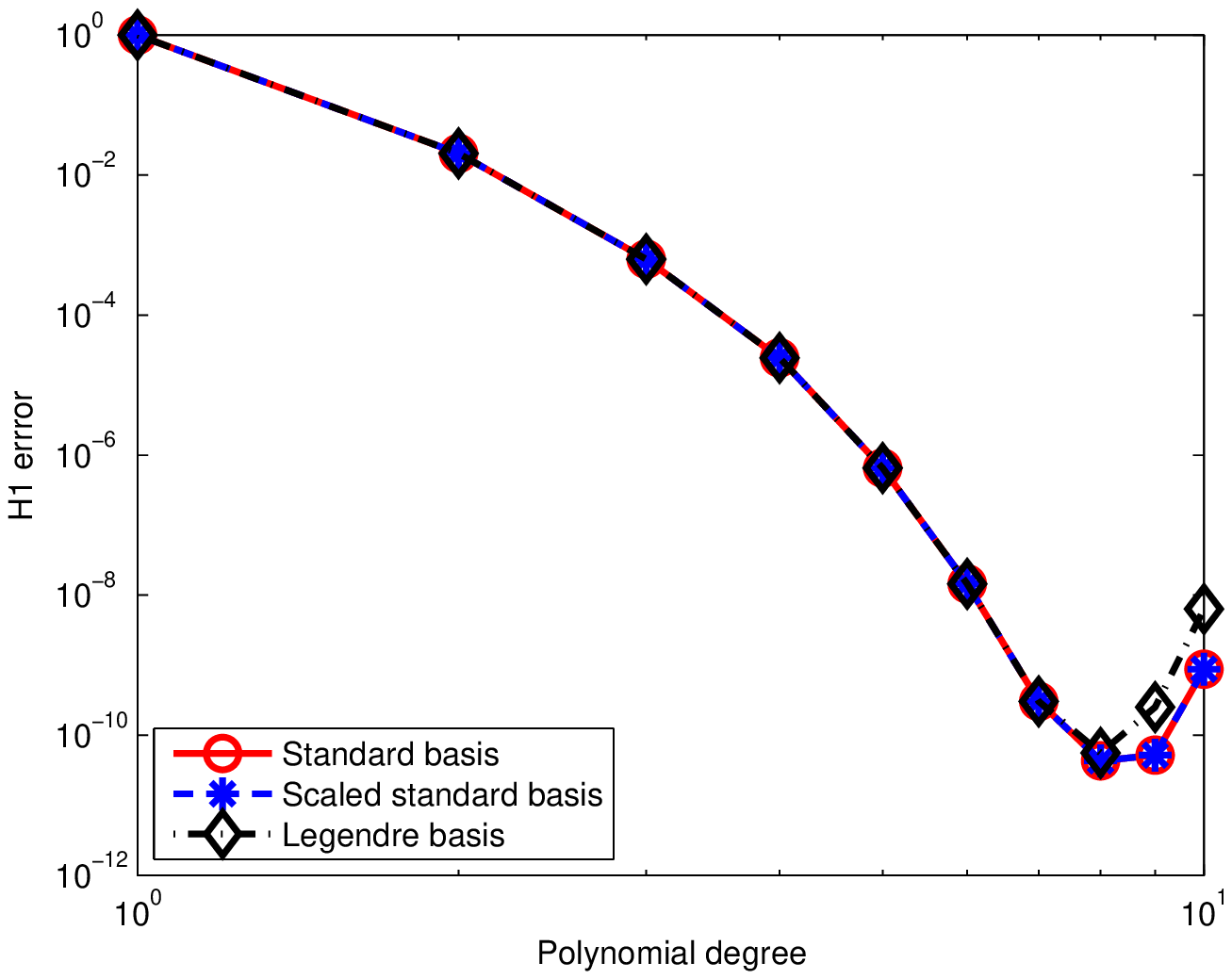}
\put(20,9.5){\colorbox{white}{\tiny 1}} \put(68,9.5){\tiny{2}} \put(94.5,9.5){\tiny{3}} \put(113,9.5){\tiny{4}} \put(127.5,9.5){\tiny{5}}
\put(139.5,9.5){\tiny{6}} \put(149,9.5){\tiny{7}} \put(158,9.5){\tiny{8}} \put(166,9.5){\tiny{9}} \put(170,9.5){\colorbox{white}{\tiny 10}}
\end{overpic}
\begin{overpic}[angle=0, width=0.45\textwidth]{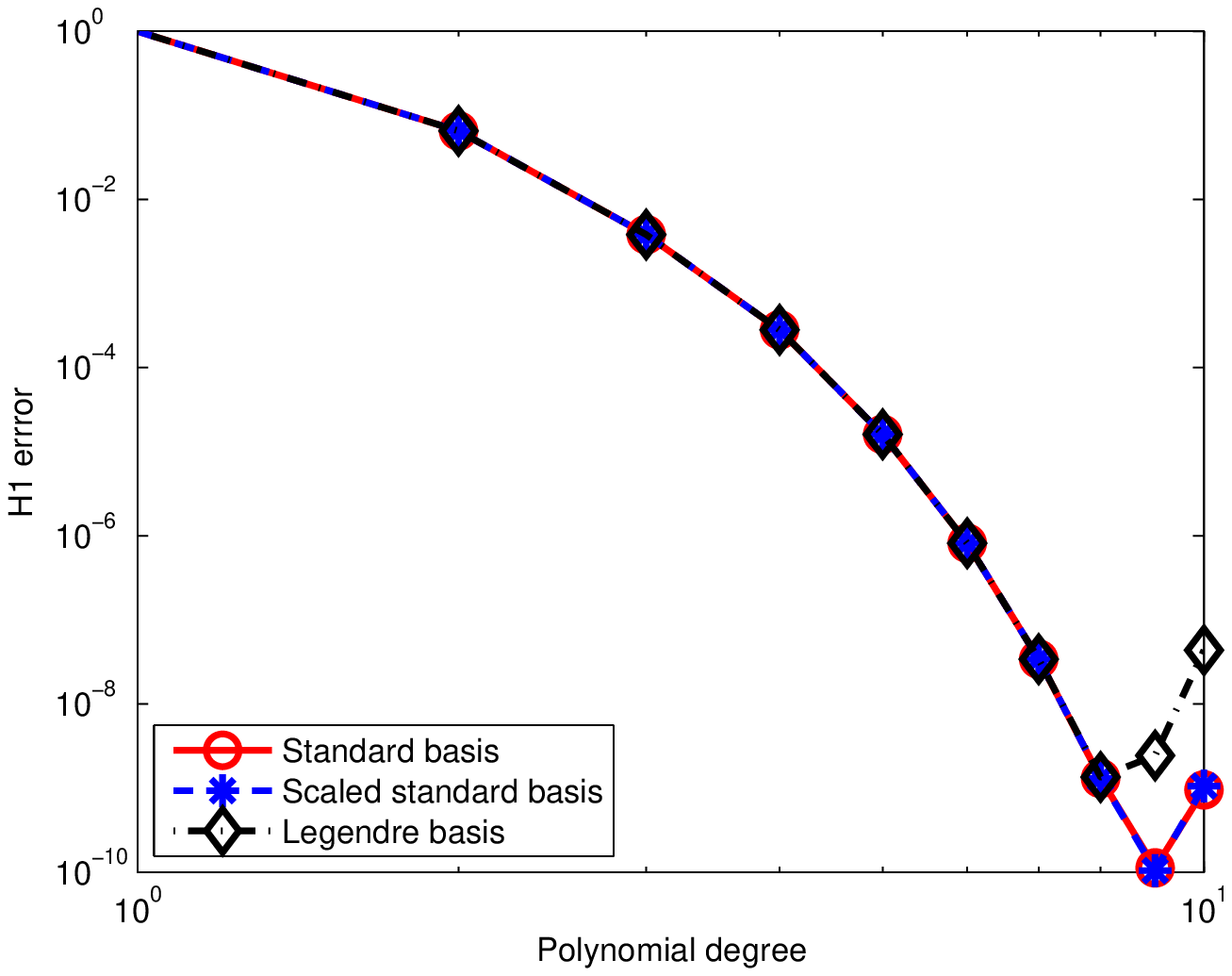}
\put(20,9.5){\colorbox{white}{\tiny 1}} \put(68,9.5){\tiny{2}} \put(94.5,9.5){\tiny{3}} \put(113,9.5){\tiny{4}} \put(127.5,9.5){\tiny{5}}
\put(139.5,9.5){\tiny{6}} \put(149,9.5){\tiny{7}} \put(158,9.5){\tiny{8}} \put(166,9.5){\tiny{9}} \put(170,9.5){\colorbox{white}{\tiny 10}}
\end{overpic}
\caption{$u(x,y)=\sin(\pi x)\sin(\pi y)$; 3: regular hexagonal mesh; 4: VoronoiLloyd mesh} \label{figurethreeerrorssinsin_b}
\end{figure}
From Figures \ref{figurethreeerrorssinsin_a} and \ref{figurethreeerrorssinsin_b}, one can see that the Legendre-type basis is a good choice for the square case, while on triangles it is very unstable;
besides, for general meshes, it seems that the slope of the error with the other two bases is almost the same and performs better than the Legendre-type basis.
\begin{figure} [h]
\begin{overpic}[angle=0, width=0.45\textwidth]{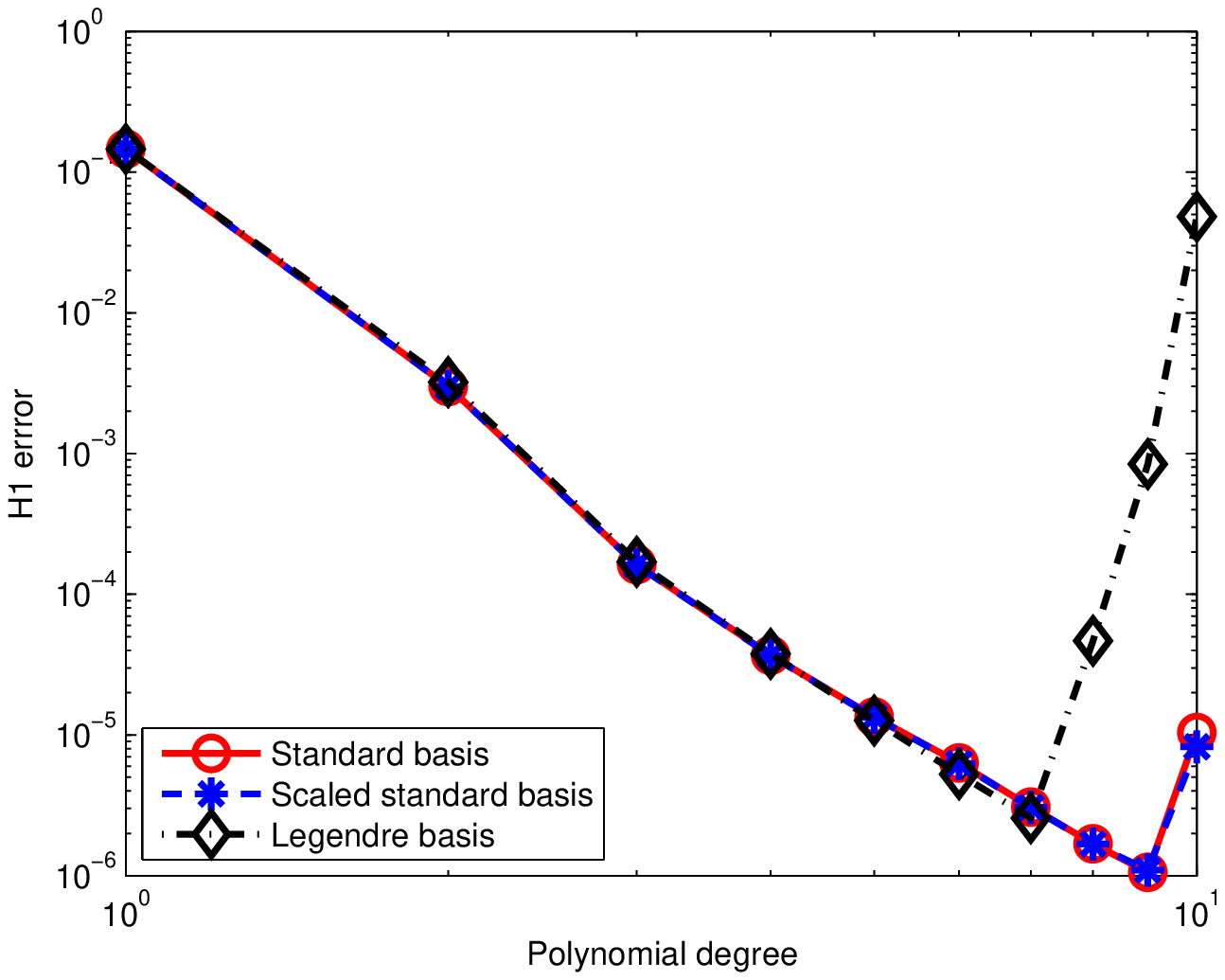}
\put(20,9.5){\colorbox{white}{\tiny 1}} \put(68,9.5){\tiny{2}} \put(94.5,9.5){\tiny{3}} \put(113,9.5){\tiny{4}} \put(127.5,9.5){\tiny{5}}
\put(139.5,9.5){\tiny{6}} \put(149,9.5){\tiny{7}} \put(158,9.5){\tiny{8}} \put(166,9.5){\tiny{9}} \put(170,9.5){\colorbox{white}{\tiny 10}}
\end{overpic}
\begin{overpic}[angle=0, width=0.45\textwidth]{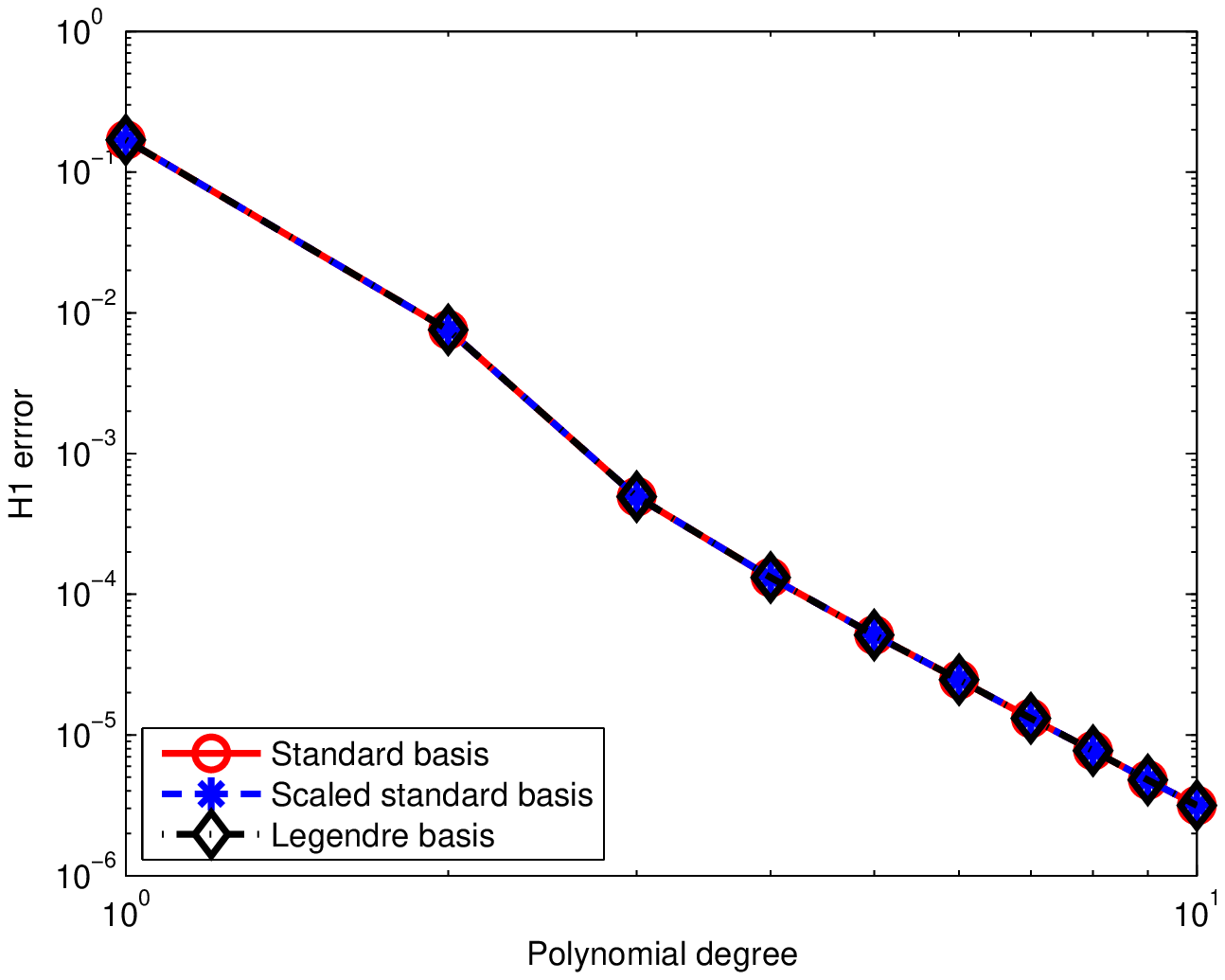}
\put(20,9.5){\colorbox{white}{\tiny 1}} \put(68,9.5){\tiny{2}} \put(94.5,9.5){\tiny{3}} \put(113,9.5){\tiny{4}} \put(127.5,9.5){\tiny{5}}
\put(139.5,9.5){\tiny{6}} \put(149,9.5){\tiny{7}} \put(158,9.5){\tiny{8}} \put(166,9.5){\tiny{9}} \put(170,9.5){\colorbox{white}{\tiny 10}}
\end{overpic}
\caption{$u(r,\theta) = r^{2.5}\sin(2.5\theta)$; 1:unstructured triangle mesh; 2: regular square mesh} \label{figurethreeerrors2punto5_a}
\end{figure}
\begin{figure}  [h]
\begin{overpic}[angle=0, width=0.45\textwidth]{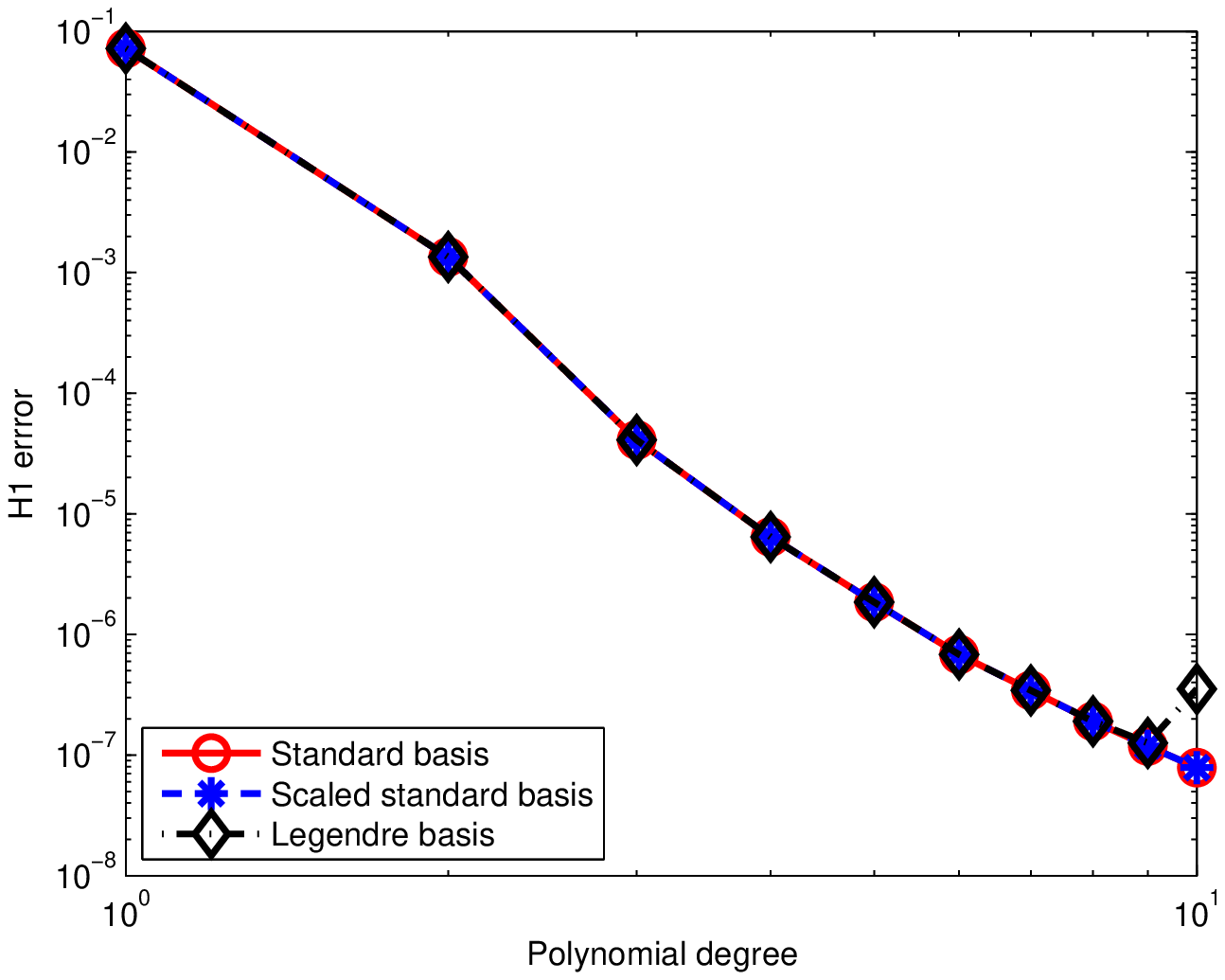}
\put(20,9.5){\colorbox{white}{\tiny 1}} \put(68,9.5){\tiny{2}} \put(94.5,9.5){\tiny{3}} \put(113,9.5){\tiny{4}} \put(127.5,9.5){\tiny{5}}
\put(139.5,9.5){\tiny{6}} \put(149,9.5){\tiny{7}} \put(158,9.5){\tiny{8}} \put(166,9.5){\tiny{9}} \put(170,9.5){\colorbox{white}{\tiny 10}}
\end{overpic}
\begin{overpic}[angle=0, width=0.45\textwidth]{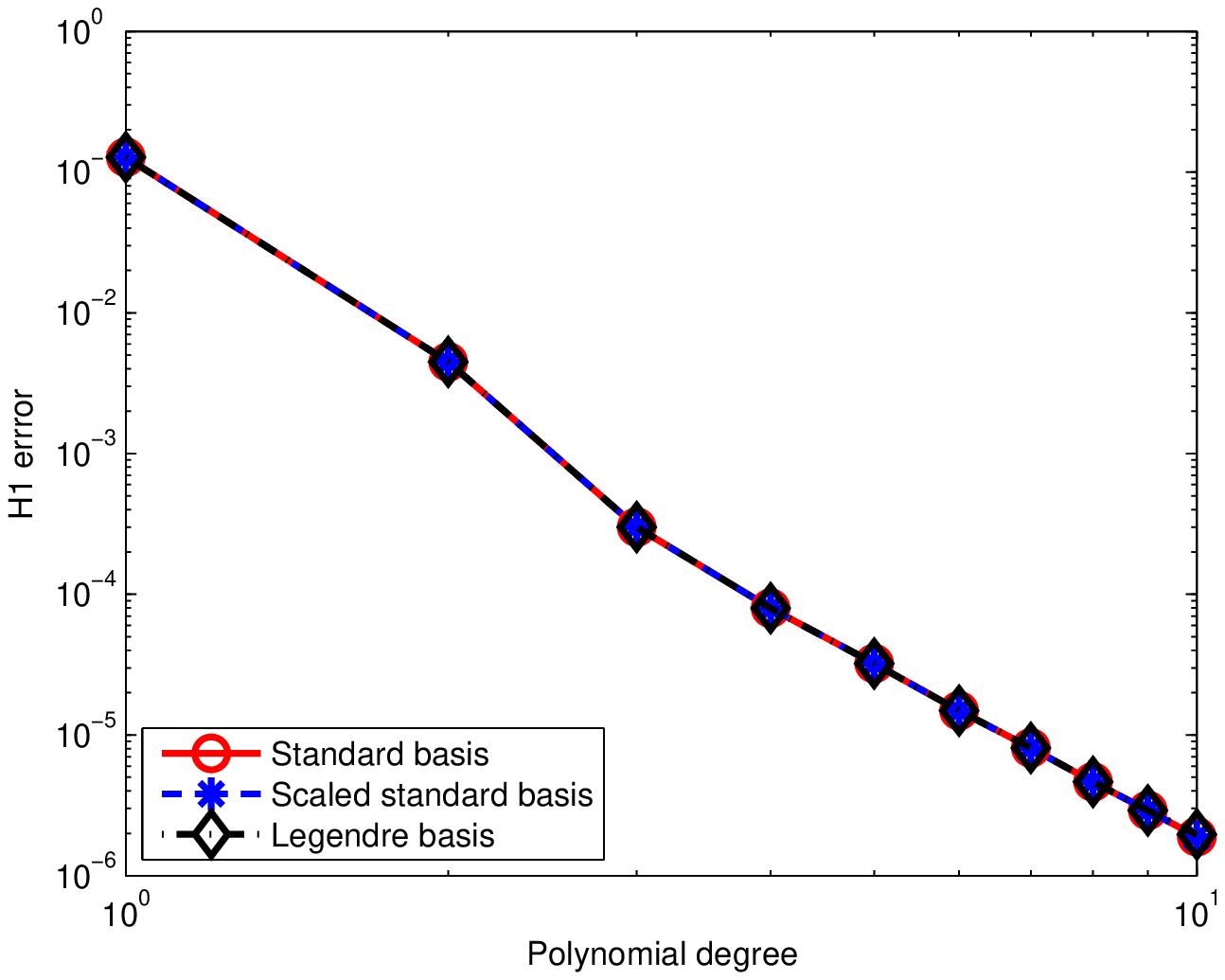}
\put(20,9.5){\colorbox{white}{\tiny 1}} \put(68,9.5){\tiny{2}} \put(94.5,9.5){\tiny{3}} \put(113,9.5){\tiny{4}} \put(127.5,9.5){\tiny{5}}
\put(139.5,9.5){\tiny{6}} \put(149,9.5){\tiny{7}} \put(158,9.5){\tiny{8}} \put(166,9.5){\tiny{9}} \put(170,9.5){\colorbox{white}{\tiny 10}}
\end{overpic}
\caption{$u(r,\theta) = r^{2.5}\sin(2.5\theta)$; 3: regular hexagonal mesh; 4: VoronoiLloyd mesh} \label{figurethreeerrors2punto5_b}
\end{figure}
The same considerations for Figures \ref{figurethreeerrorssinsin_a} and \ref{figurethreeerrorssinsin_b} hold for Figures \ref{figurethreeerrors2punto5_a} and \ref{figurethreeerrors2punto5_b}.\\
We point out that in our numerical tests we have used a direct solver in order to work out the global system arising from the discrete problem \eqref{discreteproblem}.
A consequence of this fact is that the condition number of the global matrix does not affect the resolution of the linear system as it would do if we used an iterative solver.
This explains why the behavior of the errors with the choice of the classical basis $\{q_{\boldalpha}^1\}$ and the scaled basis $\{q_{\boldalpha}^2\}$ is almost the same,
notwithstanding the large difference in the condition number of the global matrix as shown in Figure \ref{figurecond3mesh_a} and in Figure \ref{figurecond3mesh_b}.

%This fact explains why the behaviour of the errors with the choice of the classical basis $\{q_{\boldalpha}^1\}$ and the scaled basis $\{q_{\boldalpha}^2\}$ is almost the same.
%We stress that if one wants to use an iterative method, he should use the basis $\{q_{\boldalpha}^3\}$ as suggested.
% -----------------------------------------------------------------
\subsection{A ``Virtual'' Gram Schmidt process} \label{subsectionavirtualgramschmidtprocess}
% -----------------------------------------------------------------
From the previous subsection, it is clear that by suitably changing the basis of the space one obtains better condition numbers for the global stiffness matrix.
Despite this, from Figures \ref{figurecond3mesh_a} and \ref{figurecond3mesh_b} we note that the condition numbers are still large and, although in our codes we use a direct solver, it would be preferable to reduce such numbers.
Therefore, in this subsection, we consider a technique which considerably reduces the condition number of the global stiffness matrix,
but at the price of an unstable propagation of the machine error precision (as better discussed later).
In particular, we begin working locally and we fix an element $K \in \tauh$; we consider the local stiffness matrix $\AhK$.
We recall that the $(i,j)$-th entry of $\AhK$ has the following form (see \cite{VEMvolley}):
\[
[  \AhK  ] _{i,j} := [\mathbf{A}]_{i,j} =\ahK (\varphi _i, \varphi _j) = a^K ( \Pinabla \varphi _i , \Pinabla \varphi _j) + S^K(\varphi _i -\Pinabla \varphi _i , \varphi _j - \Pinabla \varphi _j).
\]
Moreover, we recall that 
\[
\AhK \in := \mathbb R ^{B_d+I_d, B_d, + I_d},
\]
where $I_d$ and $B_d$ are respectively the internal and the boundary degrees of freedom.
As already mentioned, the internal ``bubble'' basis functions are the main responsible for the growth of the condition number when $p$ increases.\\
In Section \ref{subsubsectionavirtualGramSchmidtmethod}, we introduce a technique which allows us to orthonormalize the internal elements of the local basis with respect to the discrete bilinear form $\ahK$.
In Section \ref{subsubsectionnumericalresultsaboutthevirtualgramschmidtmethod}, we present numerical experiments in which we compare the condition numbers and $H^1$ type errors of both standard VEM 
(i.e. those using the basis $\{q_{\boldalpha}^1\}$) and this new choice.
% -----------------------------------------------------------------
\subsubsection{A ``virtual'' Gram Schmidt method} \label{subsubsectionavirtualGramSchmidtmethod}
% -----------------------------------------------------------------
Let $G_d$, $B_d$ and $I_d$ be the number of total, boundary-vertex and internal degrees of freedom respectively  over $K$ of the space $\VhK$. Let 
\begin{equation} \label{oldbasis}
\{ \varphi _i \}_{i=1}^{G_d} = \{ \varphi ^B _j \}_{j=1}^{B_d} \cup \{ \varphi ^I _l \}_{l=1}^{I_d}
\end{equation}
be the basis of $\VhK$ introduced in \cite{hitchhikersguideVEM}, where $\{ \varphi ^B _j \}_{j=1}^{B_d}$ represents the ``boundary'' basis elements and $\{ \varphi ^I _l \}_{l=1}^{I_d}$ represents the ``internal'' basis elements.
We want here to introduce a new basis which shares the same ``boundary'' elements of  (\ref{oldbasis}) and which has orthonormalized ``internal'' elements with respect to the discrete local bilinear form $\ahK$.
The new basis reads
\begin{equation} \label{newbasis}
\{ \tildephi _i \}_{i=1}^{G_d} = \{ \varphi ^B _j \}_{j=1}^{B_d} \cup \{ \tildephi ^I _l \}_{l=1}^{I_d},
\end{equation}
with the $\tildephi ^I _l$ are to be defined.
Since we want orthonormality with respect to $\ahK$, we simply apply the Gram Schmidt method to the internal elements of (\ref{oldbasis}).
We observe that each $\tildephi ^I _l$ can be split as:
\[
\tildephi ^I _l = \sum _{k=1}^{l} \lambda _{l,k} \varphi ^I _k,
\]
where the $\lambda _{l,k}$ are the Gram Schmidt coefficients.\\
Before giving an explicit representation of the $\lambda_{k,l}$, we want to observe the following fact. Let $\widetilde {\mathbf A}  _{II}$ be the matrix having $(i,j)$-th entry given by $\ahK (\tildephi _i ^I, \tildephi _j ^I)$.
Then, thanks to the choice of using the Gram Schmidt method, we have that $\widetilde {\mathbf A} _{II}$, is a diagonal matrix.
We can explicitly represent such matrix by means of:
\[
\widetilde {\mathbf A} _{II} = \mathbf{\Lambda} \cdot \mathbf{A _{II}} \cdot \mathbf{\Lambda}^T,
\]
where $\mathbf{\Lambda}$ is the left lower triangular matrix having in the $l$-th row the coefficients $\lambda_{l,k}$, $k=1,\dots,l$ of $\tildephi _l ^I$.\\
Out task, now, is to find the matrix $\mathbf{\Lambda}$. We stress the fact that in order to compute such matrix we can not use explicitly the ``internal'' basis elements since they are virtual;
notwithstanding, we can only use the scalar products $\ahK (\varphi _i^I , \varphi_j^I)$.
We start by orthogonalizing the basis. We know that:
\[
\tildephi _1^I = \varphi_1^I,\quad \tildephi_l^I = \varphi_l^I - \sum_{k=1}^{l-1} \frac{\ahK(\varphi_l^I,\tildephi^I_k)}{\ahK(\tildephi^I_k,\tildephi^I_k)} \tildephi^I_k,\quad \forall l=2,\dots,I_d.
\]
%In order to find out some recursive formula on the coefficients, we give the full representation for, let's say, the first three elements:
%\begin{equation} \label{representationonthefirst3elements}
%\begin{split}
%& \tildephi_1^I = \varphi _1^I := \lambda_{1,1} \varphi_1^I,\\
%& \tildephi_2^I = \varphi _2^I - \frac{(\varphi _2^I,\tildephi _1^I)}{(\tildephi _1^I,\tildephi _1^I)} \tildephi _1^I = \varphi_2^I - \lambda_{1,1}\frac{(\varphi _2^I,\tildephi _1^I)}{(\tildephi _1^I,\tildephi _1^I)} \varphi_1^I:=
%                             \lambda_{2,1}\varphi_1^I + \lambda_{2,2} \varphi_2^I,\\
%& \tildephi_3^I = \varphi_3^I - \frac{(\varphi _3^I,\tildephi _2^I)}{(\tildephi _2^I,\tildephi _2^I)}\tildephi_2^I - \frac{(\varphi _3^I,\tildephi _1^I)}{(\tildephi _1^I,\tildephi _1^I)} \tildephi_1^I\\
%& = \varphi_3^I -\left[ \lambda_{2,2} \frac{(\varphi _3^I,\tildephi _2^I)}{(\tildephi _2^I,\tildephi _2^I)}  \right] \varphi_2^I 
%       -\left[ \lambda_{2,1}\frac{(\varphi _3^I,\tildephi _2^I)}{(\tildephi _2^I,\tildephi _2^I)} + \lambda_{1,1} \frac{(\varphi _3^I,\tildephi _1^I)}{(\tildephi _1^I,\tildephi _1^I)}  \right] \varphi_1^I
%      := \lambda_{3,1} \varphi^I_1 +  \lambda_{3,2}\varphi^I_2  + \lambda_{3,3}\varphi^I_3.
%\end{split}
%\end{equation}
By induction, one can show that the coefficients of the function $\tildephi _l^I$ reads:
\begin{equation} \label{coefficientslthelement}
\lambda_{l,l}=1 ,\quad \lambda_{l,k} = - \sum_{j=k}^{l-1} \lambda_{j,k} \frac{\ahK(\varphi_l^I, \tildephi _j^I)}{\ahK(\tildephi _j^I,\tildephi _j^I)},\quad \forall k=1,\dots,l-1.
\end{equation}
We observe that the coefficients $\lambda_{j,k}$ in (\ref{coefficientslthelement}) are already known at the l-th step.
On the other hand, we need a routine which permits to compute $\ahK(\varphi_l^I, \tildephi _j^I)$ and $\ahK(\tildephi _j^I,\tildephi _j^I)$, without knowing explicitly the basis and only knowing the scalar products $\ahK(\varphi _i^I, \varphi_j^I)$. One has:
\begin{equation} \label{contisudenominatore}
\ahK(\tildephi _j^I,\tildephi _j^I) = \ahK(\sum _{i=1}^{j} \lambda_{j,i} \varphi_i^I , \sum _{h=1}^{j} \lambda_{j,h} \varphi_h^I)=\mathbf{\Lambda}(j,:) \cdot \mathbf{A_{II}} \cdot \mathbf{\Lambda}^T(:,j)
\end{equation}
and
\begin{equation} \label{contisunumeratore}
\ahK(\varphi_l^I, \tildephi _j^I) = \ahK(\varphi_l^I, \sum_{i=1}^{j}\lambda_{j,i}\varphi_i^I) = \mathbf{A_{II}} (l,:) \cdot \mathbf{\Lambda} ^T(:,j).
\end{equation}
Thus, we have the tools for finding out the coefficients of $\mathbf{\Lambda}$.
We remark that we have obtained a new local virtual basis and that also the internal moments in the definition of the degrees of freedom are changed.
In particular, the basis of scaled monomials $\{m_{\boldalpha}\}$ has changed to a new basis $\{\widetilde m_{\boldalpha}\}$, but for the computation of the new local stiffness matrix, we do not need to know explicitly such a new basis.
In fact, let \textbf{I} be the identity matrix having size equal to $B_d$. Then, we define:
\[
\widetilde{\mathbf{\Lambda}} = \begin{bmatrix}
\mathbf{I} & \mathbf{0} \\ \mathbf{0}^T & \mathbf{\Lambda}\\
\end{bmatrix}
\]
and the new local stiffness matrix is obtained by:
\[
\widetilde {\mathbf{A}} := \begin{bmatrix} \widetilde{\mathbf{A}}_{BB} & \widetilde{\mathbf{A}}_{BI} \\ \widetilde{\mathbf{A}}_{IB} & \widetilde{\mathbf{A}}_{II}  \\    \end{bmatrix}
=\widetilde{\mathbf{\Lambda}}  \cdot \begin{bmatrix} \mathbf{A}_{BB} & \mathbf{A}_{BI} \\ \mathbf{A}_{IB} & \mathbf{A}_{II}  \\    \end{bmatrix}  \cdot \widetilde{\mathbf{\Lambda}}^T
= \begin{bmatrix} \mathbf{A}_{BB} & \widetilde{\mathbf{A}}_{BI} \\ \widetilde{\mathbf{A}}_{IB} & \mathbf{D}  \\    \end{bmatrix}.
\]
It remains the normalization of the internal basis elements.
We observe that the entries of the diagonal matrix $\widetilde {\mathbf A} _{II}$ are $[\widetilde {\mathbf A} _{II}]_{i,i} = \ahK(\tildephi _i, \tildephi_i)$.
Therefore, we just need to multiply on the left and on the right the matrix $\widetilde {\mathbf{\Lambda}}$ by
\[
\widetilde{\mathbf{D}} = \begin{bmatrix}    \mathbf{I} & \mathbf{0} \\ \mathbf{0}^T & |\widetilde {\mathbf A} _{II}|^{-\frac{1}{2}}     \end{bmatrix},
\]
where $|\widetilde {\mathbf A} _{II}|^{-\frac{1}{2}}$ is the diagonal matrix such that $[\widetilde {\mathbf A} _{II}|^{-\frac{1}{2}}]_{i,i} = \sqrt{|\widetilde {\mathbf A} _{II}]_{i,i}|}$, obtaining
\begin{equation} \label{lambdagrandetildetilde}
\widetilde{\widetilde{\mathbf{\Lambda}}} = \begin{bmatrix}   \mathbf{I} & \mathbf{0} \\ \mathbf{0}^T & \mathbf{\Lambda} |\widetilde {\mathbf A} _{II}|^{-\frac{1}{2}}   \end{bmatrix}.
\end{equation}
Finally, the definitive new local stiffness matrix reads:
\begin{equation} \label{Atildetilde}
\widetilde{\widetilde{\mathbf{A}}} = \begin{bmatrix} \widetilde{\widetilde{\mathbf{A}}}_{BB} & \widetilde{\widetilde{\mathbf{A}}}_{BI} \\ \widetilde{\widetilde{\mathbf{A}}}_{IB} & \widetilde{\widetilde{\mathbf{A}}}_{II}  \\    \end{bmatrix}
= \widetilde{\widetilde{\mathbf{\Lambda}}}  \cdot \begin{bmatrix} \mathbf{A}_{BB} & \mathbf{A}_{BI} \\ \mathbf{A}_{IB} & \mathbf{A}_{II}  \\    \end{bmatrix}  \cdot \widetilde{\widetilde{\mathbf{\Lambda}}}^T
= \begin{bmatrix} \mathbf{A}_{BB} & \widetilde{\widetilde{\mathbf{A}}}_{BI} \\ \widetilde{\widetilde{\mathbf{A}}}_{IB} & \mathbf{I}_{I_d, I_d}  \\    \end{bmatrix}.
\end{equation}
It is clear that the global virtual basis is obtained with a classical conforming coupling of the boundary degrees of freedom.
Finally, we stress the fact that in order to compute the projection $\Pinabla$, one may use the procedure described above; by means of straightforward computations, it is easy to recover analogous results to those presented in \cite{hitchhikersguideVEM}.
% -----------------------------------------------------------------
\subsubsection{Numerical results using the ``virtual'' Gram Schmidt method} \label{subsubsectionnumericalresultsaboutthevirtualgramschmidtmethod}
% -----------------------------------------------------------------
In the present section, we present some numerical experiments on the behaviour of the condition number.
 We consider the four different meshes in Figure \ref{figurefourmeshes} and we compare the condition number of the $L^2$ scaled basis introduced in Section \ref{subsectionthreeexplicitbasis}
and the new Gram-Schmidt basis, described in Section \ref{subsubsectionavirtualGramSchmidtmethod}.
We remind that, from the previous numerical experiments, the $L^2$ scaled basis seems to be the most well conditioned among the choices of Section \ref{subsubsectionavirtualGramSchmidtmethod}.
\begin{figure} [h]
\centering
\subfigure {\includegraphics [angle=0, width=0.45\textwidth]{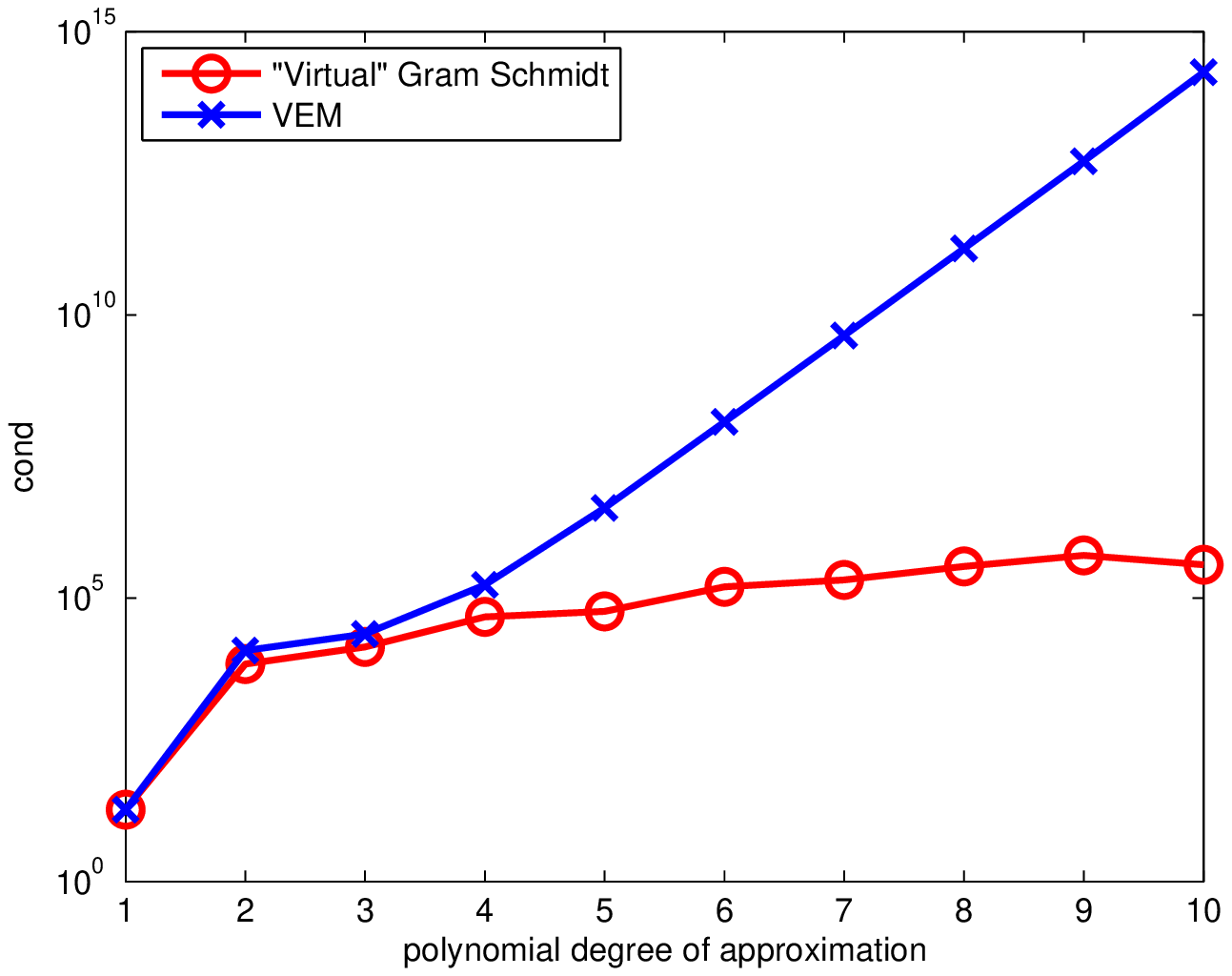}}
\subfigure {\includegraphics [angle=0, width=0.45\textwidth]{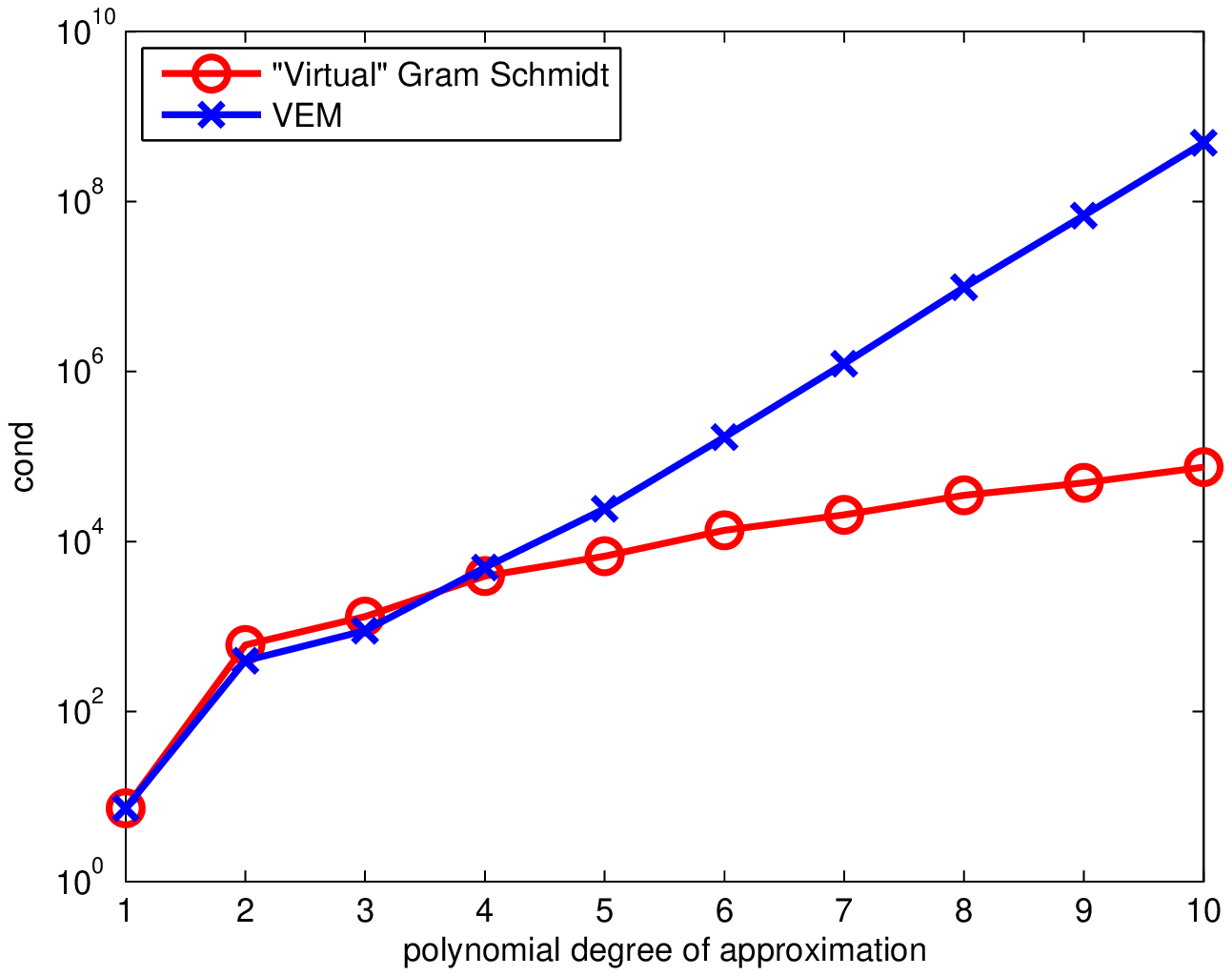}}
\caption{1:unstructured triangle mesh; 2: regular square mesh} \label{confrontoVEMeVGS_a}
\end{figure}
\begin{figure} [h]
\centering
\subfigure {\includegraphics[angle=0, width=0.45\textwidth]{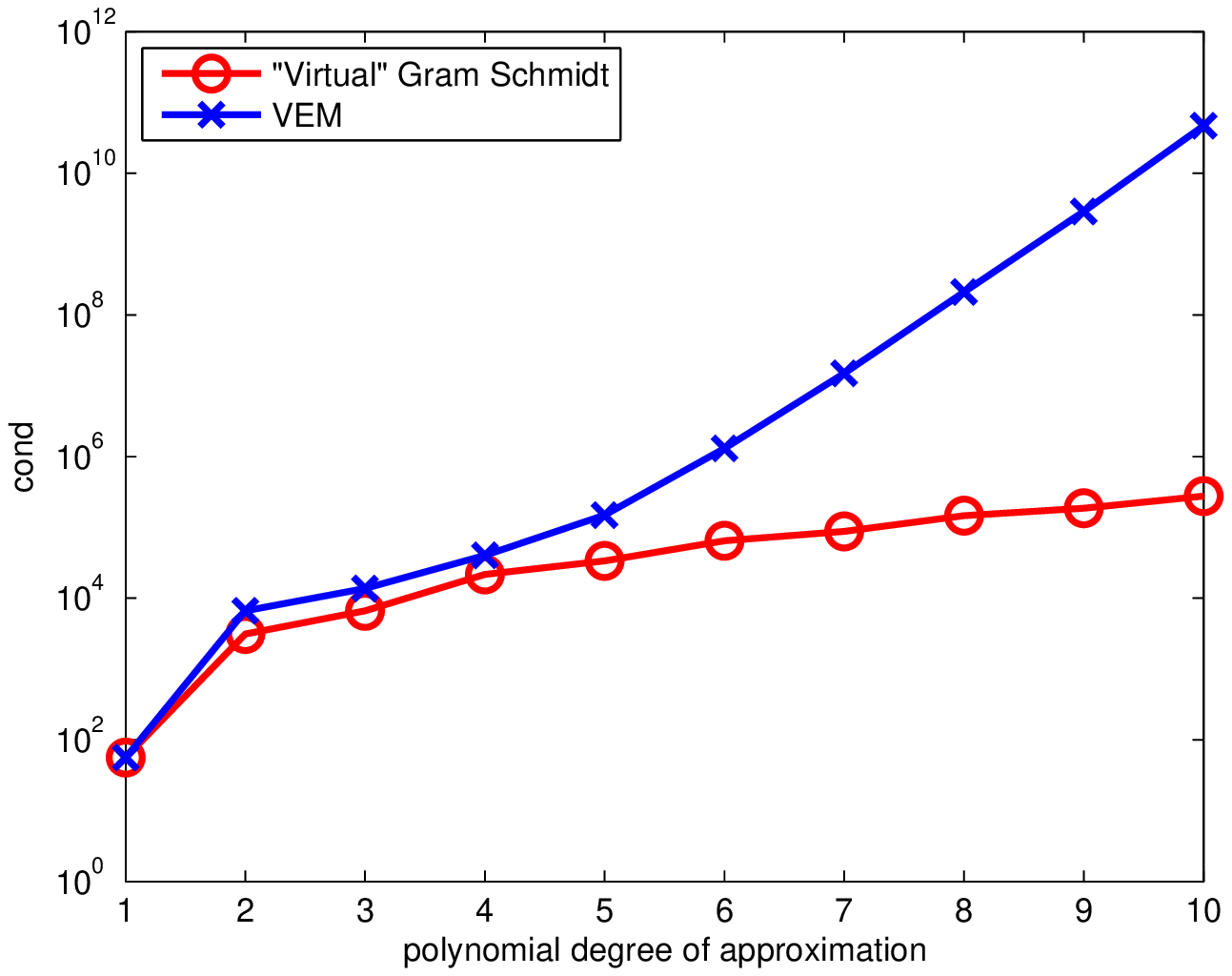}}
\subfigure {\includegraphics[angle=0, width=0.45\textwidth]{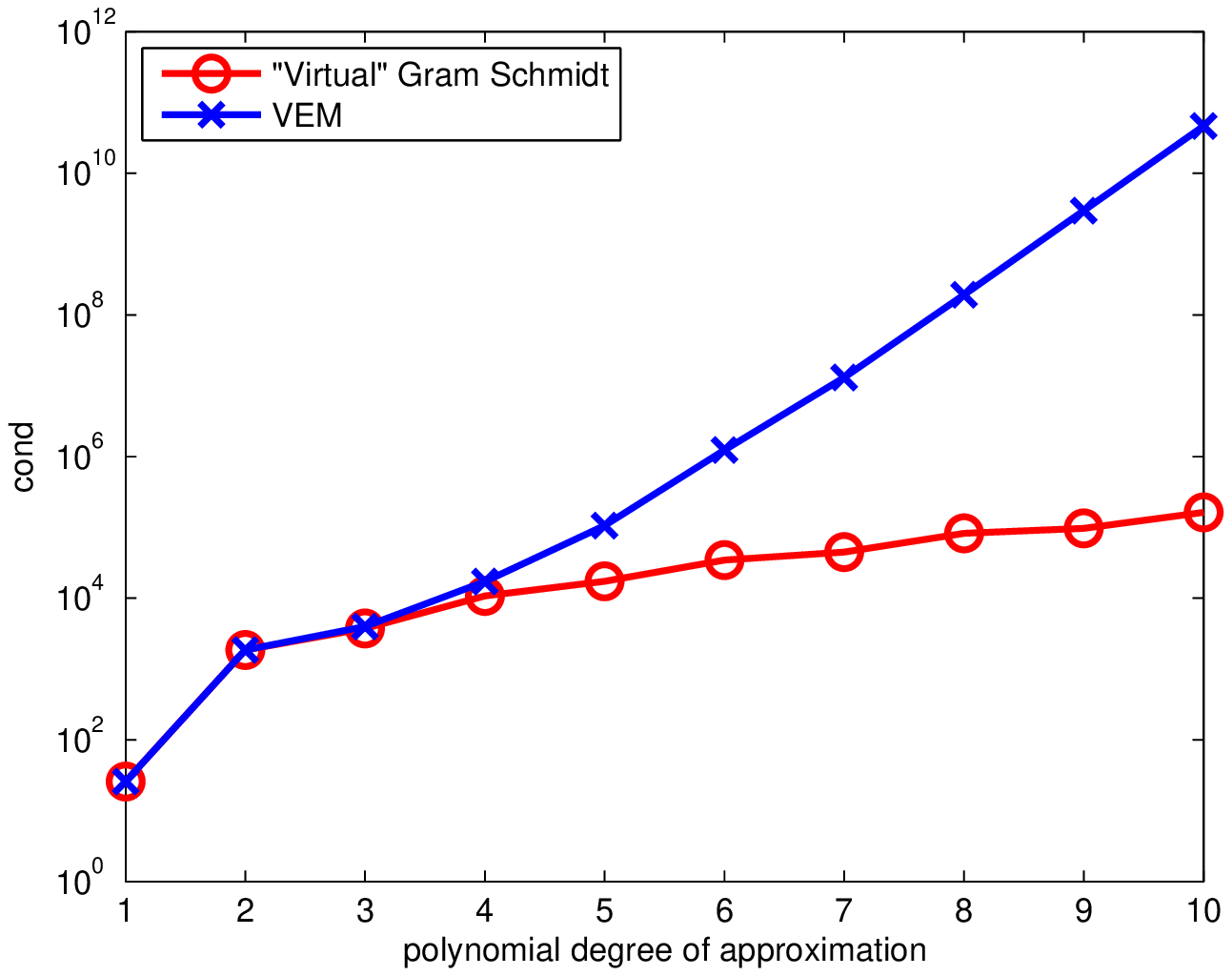}}
\caption{3: regular hexagonal mesh; 4: VoronoiLloyd mesh} \label{confrontoVEMeVGS_b}
\end{figure}
From the results in Figures \ref{confrontoVEMeVGS_a} and \ref{confrontoVEMeVGS_b}, it follows that the Gram Schmidt basis performs much better, at least for what concerns the condition number.\\
In Figures \ref{confrontoVEMeVGS_err_a} and \ref{confrontoVEMeVGS_err_b}, we compare the behaviour of the error $|u-\Pinablap u_h|_{1,\Omega}$ using the two bases above on the usual test case $u(x,y)=\sin(\pi x) \sin(\pi y)$.
\begin{figure} [h]
\begin{overpic}[angle=0, width=0.45\textwidth]{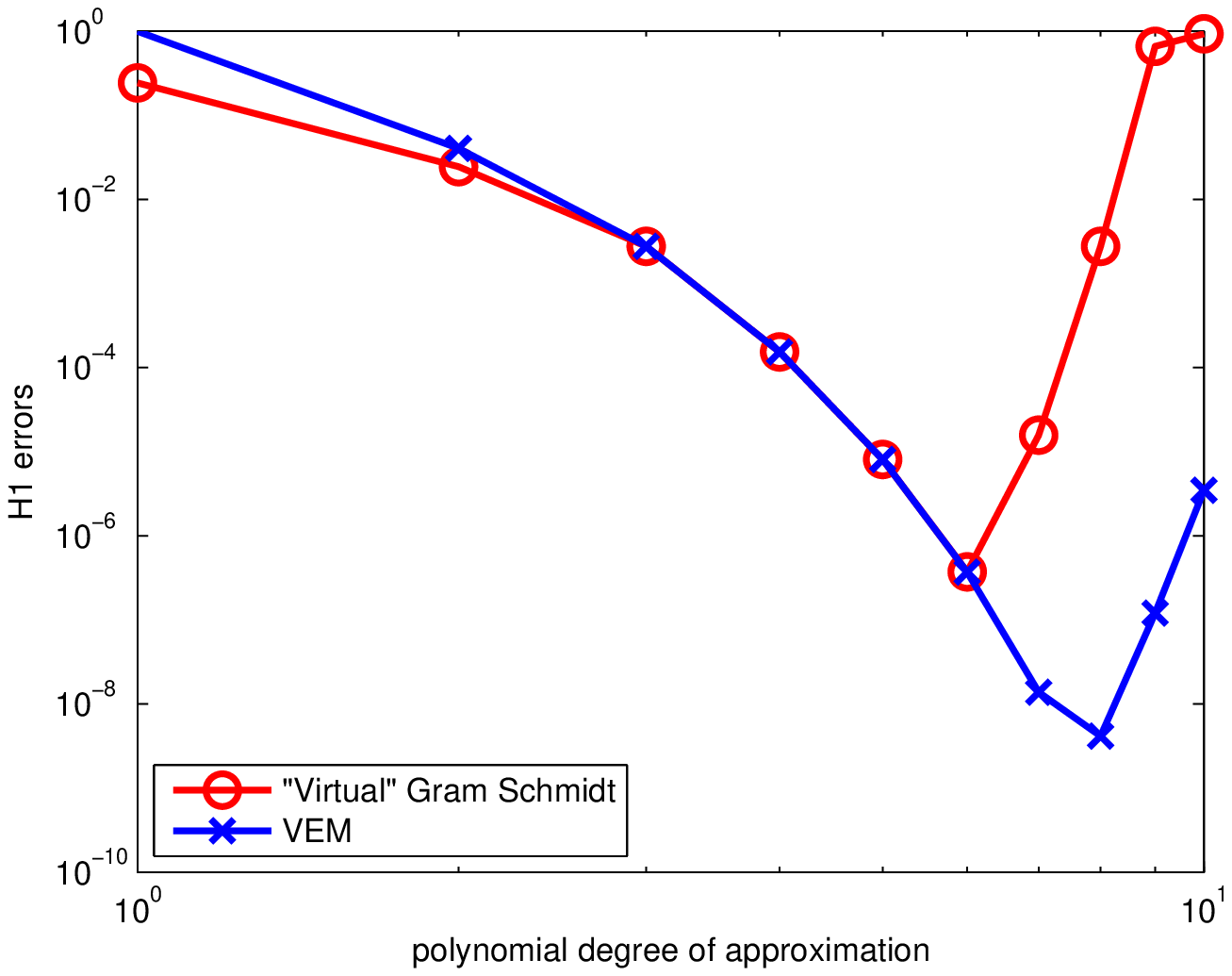}
\put(20,9.5){\colorbox{white}{\tiny 1}} \put(68,9.5){\tiny{2}} \put(94.5,9.5){\tiny{3}} \put(113,9.5){\tiny{4}} \put(127.5,9.5){\tiny{5}}
\put(139.5,9.5){\tiny{6}} \put(149,9.5){\tiny{7}} \put(158,9.5){\tiny{8}} \put(166,9.5){\tiny{9}} \put(170,9.5){\colorbox{white}{\tiny 10}}
\end{overpic}
\begin{overpic}[angle=0, width=0.45\textwidth]{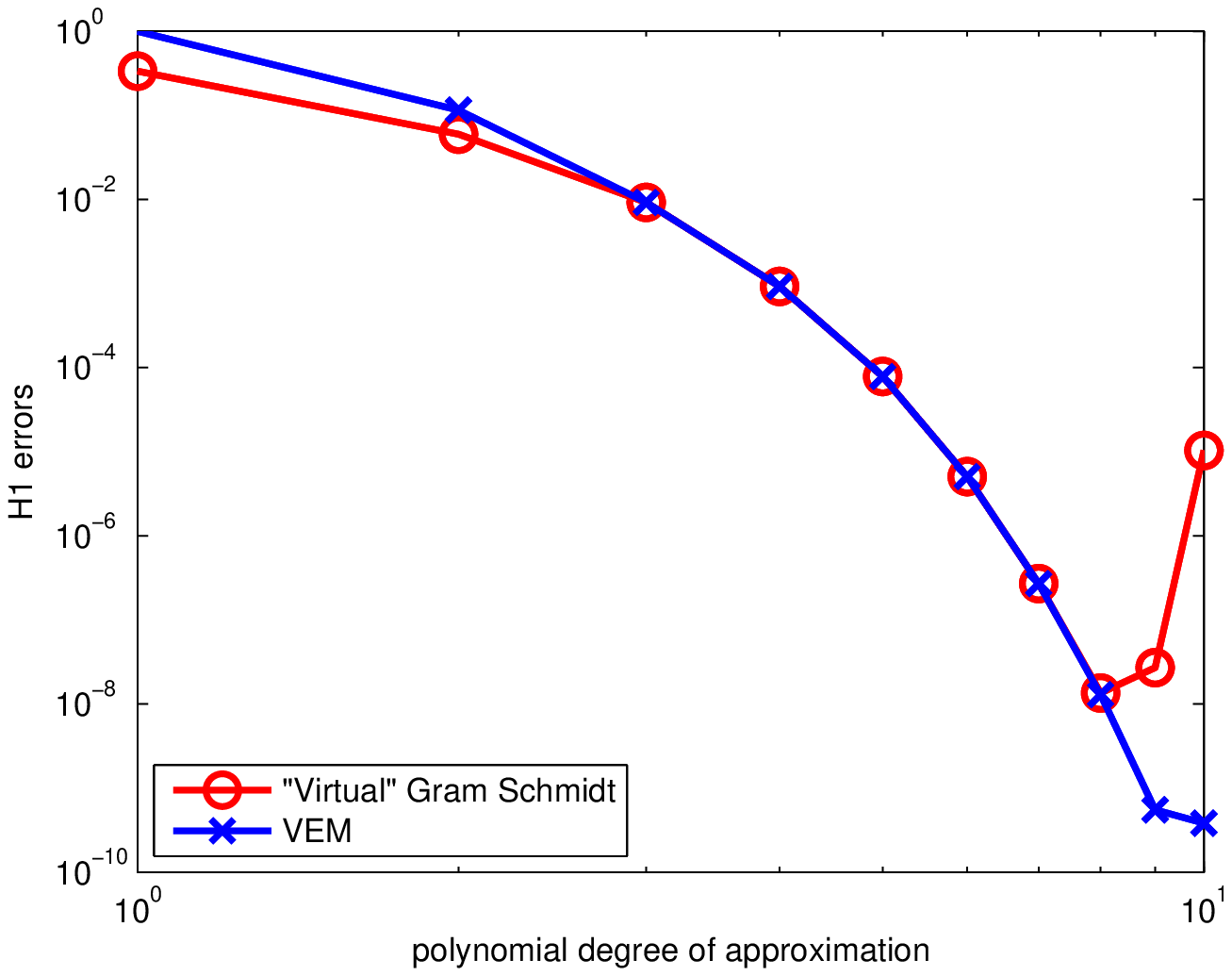}
\put(20,9.5){\colorbox{white}{\tiny 1}} \put(68,9.5){\tiny{2}} \put(94.5,9.5){\tiny{3}} \put(113,9.5){\tiny{4}} \put(127.5,9.5){\tiny{5}}
\put(139.5,9.5){\tiny{6}} \put(149,9.5){\tiny{7}} \put(158,9.5){\tiny{8}} \put(166,9.5){\tiny{9}} \put(170,9.5){\colorbox{white}{\tiny 10}}
\end{overpic}
\caption{1:unstructured triangle mesh; 2: regular square mesh} \label{confrontoVEMeVGS_err_a}
\end{figure}
\begin{figure} [h]
\begin{overpic}[angle=0, width=0.45\textwidth]{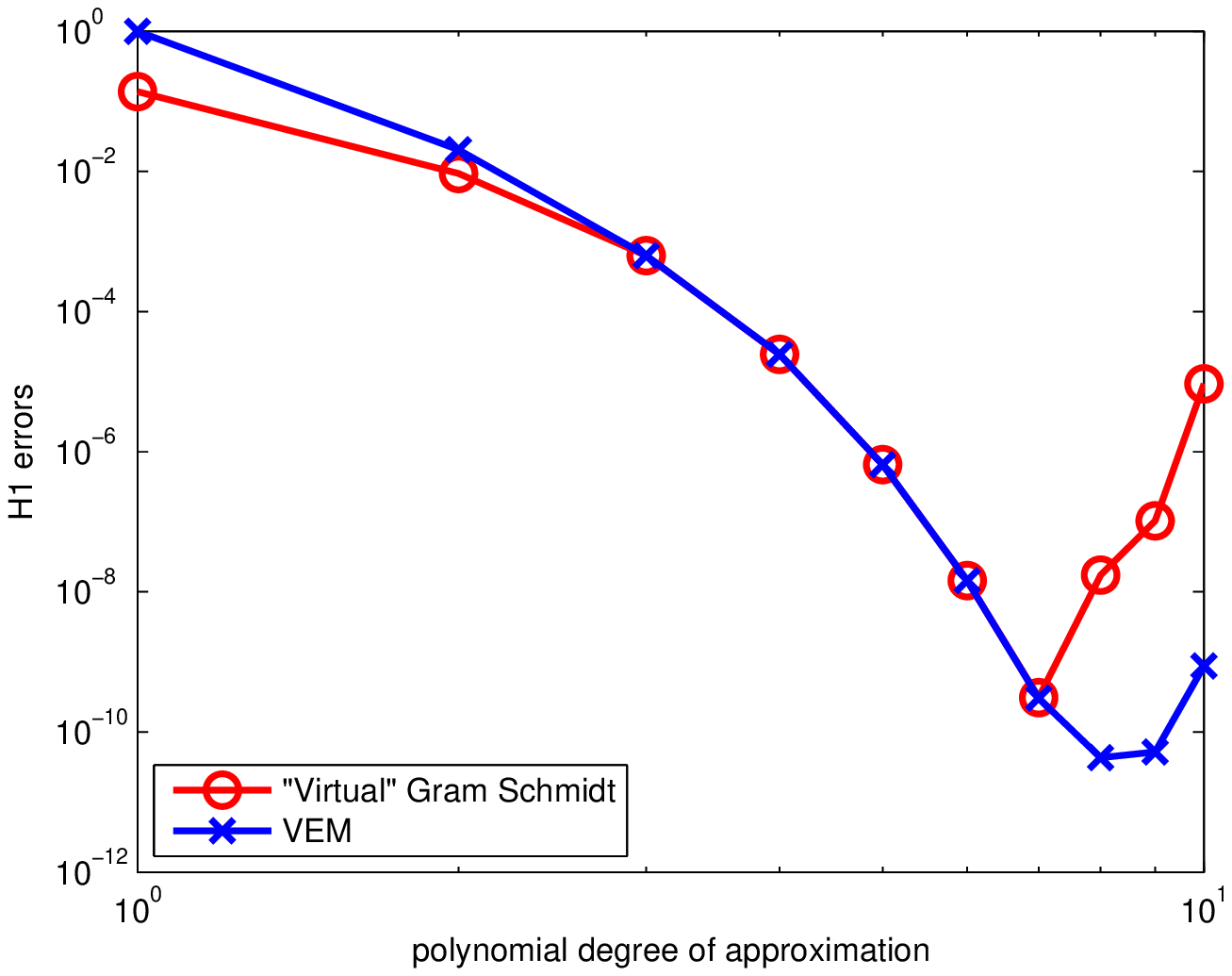}
\put(20,9.5){\colorbox{white}{\tiny 1}} \put(68,9.5){\tiny{2}} \put(94.5,9.5){\tiny{3}} \put(113,9.5){\tiny{4}} \put(127.5,9.5){\tiny{5}}
\put(139.5,9.5){\tiny{6}} \put(149,9.5){\tiny{7}} \put(158,9.5){\tiny{8}} \put(166,9.5){\tiny{9}} \put(170,9.5){\colorbox{white}{\tiny 10}}
\end{overpic}
\begin{overpic}[angle=0, width=0.45\textwidth]{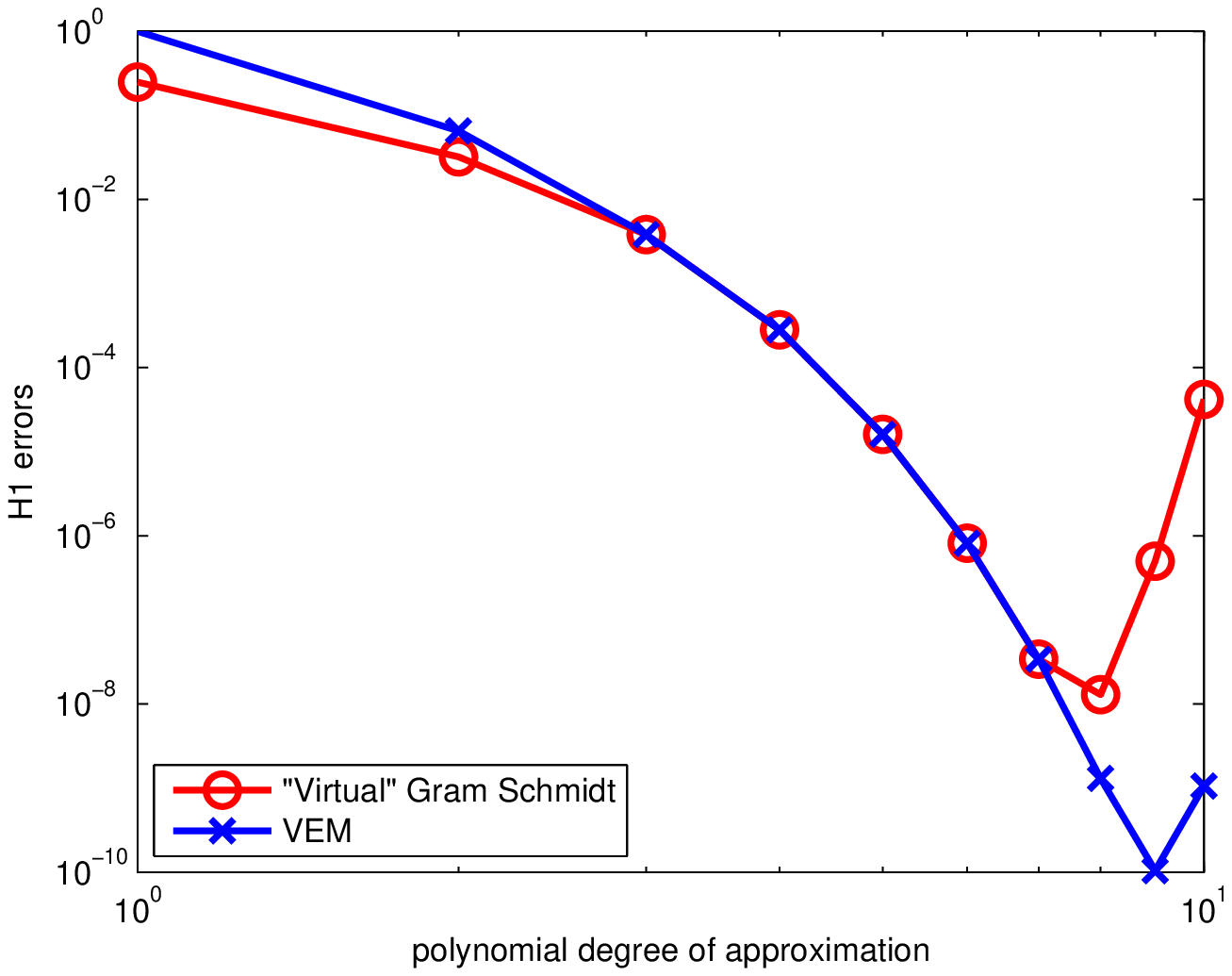}
\put(20,9.5){\colorbox{white}{\tiny 1}} \put(68,9.5){\tiny{2}} \put(94.5,9.5){\tiny{3}} \put(113,9.5){\tiny{4}} \put(127.5,9.5){\tiny{5}}
\put(139.5,9.5){\tiny{6}} \put(149,9.5){\tiny{7}} \put(158,9.5){\tiny{8}} \put(166,9.5){\tiny{9}} \put(170,9.5){\colorbox{white}{\tiny 10}}
\end{overpic}
\caption{3: regular hexagonal mesh; 4: VoronoiLloyd mesh} \label{confrontoVEMeVGS_err_b}
\end{figure}
We observe that, although the method described in this subsection improves the condition number of the global stiffness matrix, it is numerically unstable.
Therefore, in practice, the proposed Gram-Schmidt method may be preferable to the simple basis choice in \eqref{basisVEMvolleyscaled} only for mid-low values of $p$.

%%%%%%%%%%%%%%%%%%%%%%%%%%%%%%%%%%%%%%%%%%%%%%%%%%%%%%%%%%%%%%%%%%%%%%%%%%%
{\footnotesize
\bibliography{bibliogr}
}
\bibliographystyle{plain}
%%%%%%%%%%%%%%%%%%%%%%%%%%%%%%%%%%%%%%%%%%%%%%%%%%%%%%%%%%%%%%%%%%%%%%%%%%%

\end{document}